\numberwithin{equation}{section}
\numberwithin{figure}{section}
\theoremstyle{plain}
\newtheorem*{cor*}{\protect\corollaryname}
\theoremstyle{plain}
\newtheorem{thm}{\protect\theoremname}[section]
\theoremstyle{definition}
\newtheorem{defn}[thm]{\protect\definitionname}
\theoremstyle{remark}
\newtheorem{rem}[thm]{\protect\remarkname}
\theoremstyle{plain}
\newtheorem{prop}[thm]{\protect\propositionname}
\theoremstyle{plain}
\newtheorem{lem}[thm]{\protect\lemmaname}
\theoremstyle{plain}
\newtheorem{cor}[thm]{\protect\corollaryname}
\theoremstyle{plain}
\numberwithin{equation}{section}
\numberwithin{figure}{section}
 \let\footnote=\endnote
\theoremstyle{definition}
\newtheorem{thmx}{Theorem}
\def\a{\alpha}
\def\A{\mathcal{A}}
\def\b{\beta}
\def\B{\mathcal{B}}
\def\s{\sigma}
\def\L{\Lambda}
\def\R{\mathbb{R}}
\def\L{\mathcal{L}}
\def\CC{\mathcal{C}}
\def\N{\mathbb{N}}
\def\Z{\mathbb{Z}}
\def\ep{\varepsilon}
\def\vps{\varphi^s}
\def\Md{M_{d \times d}(\R)}
\def\I{\mathsf{I}}
\def\J{\mathsf{J}}
\def\K{\mathsf{K}}
\def\W{\mathcal{W}}
\def\WW{\mathbb{W}}
\newcommand{\Wloc}{\mathcal{W}_{\text{loc}}}
\def\vp{\varphi}
\def\hol{H\"older }
\def\wt{\widetilde}
\def\Sig{\Sigma_T}
\def\M{\mathcal{M}}
\def\U{\mathcal{U}}
\def\glr{\text{GL}_d(\R)}
\def\gltwo{\text{GL}_2(\R)}          
\address{Department of Mathematics,
The Ohio State University, Columbus, OH 43210, USA} 
\email{call.119@osu.edu}
\address{Department of Mathematics, The University of Chicago, Chicago, IL 60637, USA} 
\email{kihopark@math.uchicago.edu}
  \providecommand{\corollaryname}{Corollary}
  \providecommand{\definitionname}{Definition}
  \providecommand{\lemmaname}{Lemma}
  \providecommand{\propositionname}{Proposition}
  \providecommand{\remarkname}{Remark}
  \providecommand{\theoremname}{Theorem}
\providecommand{\theoremname}{Theorem}
\newcommand{\comment}[1]{{}}
\begin{document}

\title[The $K$-property for Subadditive Equilibrium States]{The $K$-property for Subadditive Equilibrium States}
\author{Benjamin Call, Kiho Park}

\subjclass[2010]{37D35}
\thanks{B.C.\ is partially supported by NSF grant DMS-$1461163$.}
\date{\today}
\keywords{Equilibrium states, Thermodynamic Formalism, Kolmogorov property}

\begin{abstract}
By generalizing Ledrappier's criterion \cite{Ledrappier1977K} for the $K$-property of equilibrium states, we extend the criterion to subadditive potentials. We apply this result to the singular value potentials of matrix cocycles, and show that equilibrium states of large classes of singular value potentials have the $K$-property. 
\end{abstract}

\maketitle

\section{Introduction}

Given a continuous potential $\vp \colon X \to \R$ over a dynamical system $(X,f)$, its pressure may be defined as
$$P(\vp) = \sup\limits_{\mu \in \M(f)} \Big\{h_\mu(f)+\int \vp \,d\mu\Big\}$$
and we call the $f$-invariant measures achieving the supremum \textit{equilibrium states}. These play an important role in the study of the dynamical system $(X,f)$. Provided the entropy map $\mu\mapsto h_\mu(f)$ is upper semicontinuous, these equilibrium states exist. However, without further assumptions on the potential $\vp$ or the base dynamical system $(X,f)$, their uniqueness is not guaranteed.   

On the other hand, in his fundamental work, Bowen \cite{bowen1974some} established the following result that guarantees the existence and uniqueness of equilibrium states. Given a potential with a regularity condition, later named the Bowen property, over an expansive dynamical system with specification, the system has a unique equilibrium state; see Proposition \ref{prop: bowen thm} and \ref{prop: bowen general} for more details. Such unique equilibrium states are now well-studied with various known constructions as well as strong ergodic and statistical properties; see \cite{bowen1974some, bowen1975ergodic, ratner1973central, ruelle1976measure, parry1990zeta}. An important example that fits into this framework consists of \hol potentials over uniformly hyperbolic systems. 

Since then, the theory has been extended in mainly two different directions. One direction aims to relax the uniform hyperbolicity of the base dynamics; see for instance \cite{Knieper:1998ht, CT, Burns:2018ed,chen2020unique}. The other aims to relax and generalize the assumptions on the potential. In particular, much attention was recently brought to the subadditive generalization of thermodynamic formalism due to its applications to the dimension theory of fractals; see for instance \cite{falconer1988subadditive, zhang1997dynamical, chen2010dimension, ban2010dimensions, feng2014non, barany2017hausdorff} and references therewithin. 
In this paper, we pursue the latter generalization and study ergodic properties of the subadditive equilibrium states.

Denoting a mixing subshift of finite type by $(\Sig,\s)$ and the full shift by $(\Sigma,\sigma)$, consider a sequence of continuous functions $\Phi = \{\log \vp_n\}_{n \in \N}$ on $\Sig$. We say $\Phi$ is \textit{subadditive} if
\begin{equation}\label{eq: subadditive 1}
\log\vp_{m+n} \leq \log \vp_m +\log\vp_n \circ \s^m
\end{equation}
for all $m,n \in \N$. Associated to the subadditive potential $\Phi$ is the subadditive pressure $P(\Phi)$ introduced by Cao, Feng, and Huang \cite{cao2008thermodynamic} by generalizing the usual definition of the pressure in thermodynamic formalism; see Section \ref{sec: prelim}. In particular, the subadditive pressure $P(\Phi)$ satisfies the subadditive variational principle \cite{cao2008thermodynamic}:
\begin{equation}\label{eq: var prin}
P(\Phi) = \sup\limits_{\mu \in \M(\s)} \Big\{ h_\mu(\s) + \lim\limits_{n \to \infty}\frac{1}{n} \int \log\vp_n\,d\mu \Big\}.
\end{equation}
Any $\s$-invariant measure $\mu \in \M(\s)$ achieving the supremum in \eqref{eq: var prin} is called an \textit{equilibrium state} of $\Phi$.

Denoting by $\L$ the set of all admissible words of $\Sig$, we associate to a subadditive potential $\Phi = \{\log \vp_n\}_{n \in \N}$ a function $\wt{\Phi} \colon \L \to \R$ defined by
\begin{equation}\label{eq: wt Phi}
\wt{\Phi}(\I):=\sup\limits_{x \in [\I]} \vp_n(x).
\end{equation} 
We say a subadditive potential $\Phi =\{\log \vp_n\}_{n \in \N}$ is \textit{quasi-multiplicative} if there exists $c>0$ and $k \in \N$ such that for any $\I,\J \in \L$, there exists $\K \in \L$ with $|\K| \leq k$ such that
$$\wt{\Phi}(\I\K\J) \geq c \wt{\Phi}(\I)\wt{\Phi}(\J).$$	

Quasi-multiplicativity may be thought of as follows: given any two words $\I,\J \in \L$ of arbitrary length, we obtain the inequality opposite to subadditivity \eqref{eq: subadditive 1} at a cost of inserting a connecting word $\K \in \L$ of bounded length in between $\I$ and $\J$. Quasi-multiplicativity is a rather mild assumption, and hence it is enjoyed by a large class of subadditive potentials; see Proposition \ref{prop: irred implies qm} and Proposition \ref{prop: typical unique eq}. A particularly important application is that, together with the bounded distortion property, quasi-multiplicativity serves as a sufficient condition to generalize Bowen's theorem on the uniqueness of equilibrium states; see Proposition \ref{prop: unique eq state} for the precise statement.

In this paper we study ergodic properties of unique equilibrium states guaranteed by quasi-multiplicativity and bounded distortion. Morris \cite{morris2018ergodic} recently showed that if the unique equilibrium states associated to a class of matrix cocycles are totally ergodic, then they are actually mixing.
The main result of this paper is similar in flavor to Morris's result. However, we work in a more general class of subadditive potentials, and show that under suitable assumptions, total ergodicity can be promoted to the $K$-property, which is stronger than mixing of all orders and weaker than Bernoulli.

\begin{thmx}\label{thm: A}
Let $\Phi=\{\log\vp_n\}_{n \in \N}$ be a subadditive potential on $\Sig$, and suppose it is quasi-multiplicative and has bounded distortion. Suppose further that the unique equilibrium state $\mu \in \M(\s)$ of $\Phi$ guaranteed by Proposition \ref{prop: unique eq state} is totally ergodic. Then, $\mu$ has the $K$-property.
\end{thmx}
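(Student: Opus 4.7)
The plan is to prove Theorem~A by first extending Ledrappier's criterion for the $K$-property to the subadditive setting and then applying the extended criterion to the equilibrium state $\mu$. The target criterion I aim to prove is: if $\mu$ is the unique equilibrium state of $\Phi$ under $\s$ and, for every $n \in \N$, $\mu$ is also the unique equilibrium state of the iterated subadditive potential $\Phi^{(n)} := \{\log\vp_{kn}\}_{k \in \N}$ under $\s^n$, then $\mu$ has the $K$-property. Its proof follows Ledrappier's contradiction strategy: if the Pinsker $\s$-algebra $\mathcal{P}(\s,\mu)$ were non-trivial, a Rokhlin tower construction over $\mathcal{P}$, combined with a conditional-measure argument, would produce for some $n$ a $\s^n$-invariant measure $\nu \neq \mu$ satisfying $h_\nu(\s^n) + \lim_k \frac{1}{k}\int\log\vp_{kn}\,d\nu = nP(\Phi) = P(\Phi^{(n)})$, contradicting the uniqueness hypothesis for $\Phi^{(n)}$.

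Given the criterion, verifying its hypotheses is relatively clean. The power rule for entropy gives $h_\mu(\s^n) = n h_\mu(\s)$, and subadditivity gives $\lim_k \frac{1}{k}\int \log\vp_{kn}\,d\mu = n\lim_k \frac{1}{k}\int\log\vp_k\,d\mu$, so $\mu$ is at least an equilibrium state for $\Phi^{(n)}$ under $\s^n$. For uniqueness, total ergodicity of $\mu$ ensures ergodicity for $\s^n$, placing the support of $\mu$ inside a single topologically mixing $\s^n$-component of $\Sig$ (itself a mixing SFT for $\s^n$). On this component, $\Phi^{(n)}$ inherits quasi-multiplicativity and bounded distortion essentially by construction, since any $\s^n$-admissible word is $\s$-admissible of length a multiple of $n$, so the connecting word produced by quasi-multiplicativity of $\Phi$ can be padded to $\s^n$-admissible length without losing the estimate. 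Proposition~\ref{prop: unique eq state} then yields a unique equilibrium state for $\Phi^{(n)}$, which must therefore be $\mu$.

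The principal obstacle I anticipate is the subadditive adaptation of Ledrappier's Pinsker-algebra argument in the first stage. In the additive case, conditional expectations interact simply with a single potential function, but here the integral is replaced by a limit $\lim_k \frac{1}{k}\int\log\vp_k\,d\mu$ along a subadditive sequence, and one must carefully interchange the conditional-expectation operator with this subadditive limit while simultaneously tracking entropy along the Rokhlin tower via an Abramov--Rokhlin-style identity. I expect that a conditional analogue of the subadditive variational principle~\eqref{eq: var prin}, combined with an approximation of $\log\vp_n$ by the more tractable $\wt{\Phi}$ afforded by bounded distortion, will supply the technical framework needed to construct the measure $\nu$ in the first stage and to deliver the desired contradiction.
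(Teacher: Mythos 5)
Your proposed criterion is not Ledrappier's criterion, and its proof sketch has a genuine gap. Ledrappier's argument (\cite[Proposition 1.4]{Ledrappier1977K}, Proposition~\ref{prop: Led K} in the paper) works on the \emph{product system} $(X\times X, f\times f)$: if the Pinsker factor $\Pi$ of $\mu$ is nontrivial, the relatively independent self-joining of $\mu$ over $\Pi$ produces a measure $m \neq \mu\times\mu$ on $X\times X$ with the same pressure for the doubled potential $\Psi(x,y)=\vp_n(x)\vp_n(y)$, contradicting uniqueness on the product. What you propose instead is a criterion in terms of the iterated potentials $\Phi^{(n)}$ under $\s^n$, and you sketch a ``Rokhlin tower over $\Pi$, plus conditional-measure argument'' to produce a $\s^n$-invariant $\nu\neq\mu$ realizing the pressure of $\Phi^{(n)}$. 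This construction is not carried out and, as far as I can see, does not work: a nontrivial Pinsker factor does not in any natural way yield a competing $\s^n$-invariant measure on $X$ itself. The information ``$\Pi$ has zero entropy'' naturally manifests as a joining on $X\times X$, not as a failure of uniqueness for powers. Uniqueness of equilibrium states for every $\Phi^{(n)}$ under $\s^n$ gives total ergodicity, and (via the Morris--Ornstein argument used in Lemma~\ref{lem: total ergodicity to mixing}) mixing, but there is no direct route from there to completely positive entropy without passing through the product system.

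The paper's actual route is: (i) subadditivize Ledrappier's criterion on the product (Proposition~\ref{prop: Ledrappier subadditive}), showing that $\mu\times\mu$ being the unique equilibrium state for $\Psi$ on $(X\times X, f\times f)$ forces $\mu$ to be $K$; (ii) show via a subadditive version of Bowen's Lemma~8 (Lemma~\ref{lem: subadditive Bowen lemma 8}) that an ergodic equilibrium state with the Gibbs property is automatically unique, and apply this to $\mu\times\mu$ once $\mu$ is weakly mixing (Theorem~\ref{thm: Subadditive Kprop result}); and (iii) upgrade total ergodicity to mixing using the Gibbs bound $\mu([\I]\cap\s^{-k}[\J]) \le C\,\mu([\I])\mu([\J])$ and Ornstein's method (Lemma~\ref{lem: total ergodicity to mixing}). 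To repair your argument you would need to replace step one of your plan by this product-system construction; the second stage of your plan (verifying hypotheses from total ergodicity and bounded distortion) is broadly consistent in spirit with step (iii), though the paper routes it through the Gibbs property rather than through padding of connecting words in $\Phi^{(n)}$.
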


The key result used to prove this theorem is Theorem \ref{thm: Subadditive Kprop result}, which shows that for subadditive equilibrium states, weak mixing is equivalent to the $K$-property under some suitable assumptions, similar to those used in \cite{bowen1974some}. This holds even for non-symbolic systems, and we expect it to be of independent interest.
The remaining results in this paper are obtained by applying Theorem \ref{thm: A} to $\glr$-cocycles, including locally constant cocycles and fiber-bunched cocycles. For any cocycle $\A \colon \Sig \to \glr$, we define its \textit{norm potential} $\Phi_\A = \{\log \vp_{\A,n}\}_{n \in \N}$ as
$$\vp_{\A,n}(x):=\|\A(\s^{n-1}x) \ldots\A(x)\|.$$
From the submultiplicativity of the operator norm $\|\cdot\|$, it is clear that $\Phi_\A$ is subadditive.
The singular value potentials are natural generalizations of the norm potentials; see Section \ref{sec: prelim} for the precise definition.

For irreducible locally constant cocyles $\A \colon \Sigma \to \Md$, it was shown by Feng \cite{feng2009lyapunov} that norm potentials $\Phi_\A$ have unique equilibrium states $\mu_\A \in \M(\s)$ by establishing quasi-multiplicativity. Morris \cite{morris2019necessary} then obtained a characterization for $\mu_\A$ to be mixing under an extra assumption that at least one matrix in the image of $\A$ is invertible. This assumption is automatically met when $\A$ takes values in $\glr$. We partly reformulate \cite[Corollary 3]{morris2019necessary} for $\glr$-cocycles. 

\begin{prop}\cite[Corollary 3]{morris2019necessary}\label{prop: Morris equivalence} Suppose $\A \colon \Sigma \to \glr$ is an irreducible locally constant cocycle. Then the following are equivalent:
\begin{enumerate}
\item The unique equilibrium state $\mu$ is mixing with respect to $\s$.
\item The measure $\mu$ is ergodic with respect to $\s^d$.
\end{enumerate}
\end{prop}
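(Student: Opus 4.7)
My plan is to prove the two implications separately. Direction (1) $\Rightarrow$ (2) is immediate from the definitions: if $\mu$ is $\s$-mixing, then for any measurable sets $A, B$ we have $\mu(A \cap \s^{-kd}B) \to \mu(A)\mu(B)$ as $k \to \infty$, so $\mu$ is in fact $\s^d$-mixing and hence $\s^d$-ergodic.

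For the substantive direction (2) $\Rightarrow$ (1), I will route the argument through Theorem A. Because $\A \colon \Sigma \to \glr$ is irreducible and locally constant, the work of Feng \cite{feng2009lyapunov} yields that the norm potential $\Phi_\A$ is quasi-multiplicative, and the locally constant hypothesis gives bounded distortion automatically. Thus Theorem A applies, reducing the mixing statement to the statement that $\mu$ is totally ergodic: once we know $\mu$ is totally ergodic, $\mu$ has the $K$-property and in particular is $\s$-mixing. So the remaining task is to upgrade $\s^d$-ergodicity of $\mu$ to total ergodicity of $\mu$.

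Here I will invoke the spectral input underlying \cite[Corollary 3]{morris2019necessary}: for an irreducible $\glr$-valued locally constant cocycle, any measurable eigenfunction of the Koopman operator $U_\s$ on $L^2(\mu)$ whose eigenvalue is a primitive $p$-th root of unity must satisfy $p \mid d$. Granted this, the rational-spectrum period $p$ of $\mu$ divides $d$. Combined with the classical fact that an ergodic $\mu$ with period $p$ is $\s^n$-ergodic iff $\gcd(n,p) = 1$, the hypothesis that $\mu$ is $\s^d$-ergodic forces $\gcd(d,p)=1$ and $p\mid d$ simultaneously, hence $p = 1$. That is precisely total ergodicity, and Theorem A then delivers the $K$-property and in particular mixing.

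The hard part is the spectral divisibility $p \mid d$, which is the technical heart of Morris's argument in \cite{morris2019necessary}. It relies crucially on the irreducibility of $\A$ together with the $\glr$-structure: roughly, an $L^2$-eigenfunction of $U_\s$ can be realized as a multiplicative character on the algebraic structure generated by the cocycle values, and irreducibility together with determinantal considerations constrains the order of this character to divide $d$. Everything else in my plan is essentially assembly of Feng's quasi-multiplicativity, Theorem A, and the classical period/spectrum dictionary for ergodic measures, so the whole argument amounts to restating Morris's criterion in the form it takes once one has the $K$-property machinery of Theorem A available.
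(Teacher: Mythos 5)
The paper does not prove this proposition; it is a verbatim citation of Morris's Corollary~3 in \cite{morris2019necessary}, so there is no ``paper's own proof'' to compare against. Your forward direction (1)~$\Rightarrow$~(2) is fine. For (2)~$\Rightarrow$~(1), your outline is structurally coherent, but it is not a proof: the entire technical content --- that for an irreducible locally constant $\glr$-cocycle the group of rational eigenvalues of $U_\s$ on $L^2(\mu)$ sits inside the $d$-th roots of unity --- is exactly what Morris establishes, and you explicitly black-box it. Once that divisibility is granted, your deduction ($p \mid d$ together with $\gcd(d,p)=1$ forces $p=1$, hence total ergodicity) is correct, so what you have is a correct assembly of Morris's key lemma rather than an independent argument.

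Two further remarks on the route you chose. First, invoking Theorem~\ref{thm: A} is logically admissible (its proof does not rely on this proposition), but it is conceptually backwards in the paper's architecture: Theorem~\ref{thm: B} is precisely ``this proposition $+$ Theorem~\ref{thm: A},'' and Theorem~\ref{thm: A} delivers the $K$-property, a much stronger conclusion than the mixing asserted in item~(1). The direct route --- total ergodicity to mixing --- is already available in Lemma~\ref{lem: total ergodicity to mixing} (Morris's Ornstein-style argument from \cite{morris2018ergodic}) and does not require the Ledrappier/Pinsker machinery at all. Second, the proposition is stated over the one-sided full shift $\Sigma$, whereas Theorem~\ref{thm: A} lives on the two-sided $\Sig$; to make your route rigorous you would need to pass to the natural extension and invoke the equivalences of Subsection~\ref{sec: two sided}, a step you omit.
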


\begin{rem}
In both \cite{morris2018ergodic} and \cite{morris2019necessary}, Morris works with one-sided full shifts only, and so for brevity, we limit the following result to full shifts, though we expect it to hold for shifts of finite type. The generalization to two-sided shifts follows from the theory of natural extensions, which we discuss in Subsection \ref{sec: two sided}.
\end{rem}

By direct application of Theorem \ref{thm: A} to norm potentials of irreducible locally constant cocycles $\A \colon \Sigma \to \glr$, we may add to the list of equivalent conditions in Proposition \ref{prop: Morris equivalence} that $\mu$ has the $K$-property. This improves the result of Morris.

\begin{thmx}\label{thm: B} Let $\A \colon \Sigma \to \glr$ be an irreducible locally constant cocycle. Suppose the unique equilibrium state $\mu_\A \in \M(\s)$ of $\Phi_\A$ satisfies any one of the equivalent conditions from Proposition \ref{prop: Morris equivalence}. Then $\mu_\A$ has the $K$-property.
\end{thmx}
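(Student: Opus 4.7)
The approach is simply to apply Theorem \ref{thm: A} to $\Phi_\A$, so the proof reduces to verifying the three hypotheses of that theorem for an irreducible locally constant $\A \colon \Sigma \to \glr$: subadditivity of $\Phi_\A$, quasi-multiplicativity together with bounded distortion, and total ergodicity of the unique equilibrium state $\mu_\A$.

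Subadditivity is immediate from the submultiplicativity of the operator norm, as already noted in the introduction. Bounded distortion is even more trivial for a locally constant cocycle: since $\A(x)$ depends only on $x_0$, the product $\A(\s^{n-1}x)\cdots\A(x)$, and hence $\vp_{\A,n}$, is constant on every cylinder of length $n$, so $\wt{\Phi}_\A(\I) = \vp_{\A,n}(x)$ for all $x \in [\I]$ with $|\I|=n$ and the bounded distortion constant may be taken to be $1$. Quasi-multiplicativity of $\Phi_\A$ for irreducible locally constant cocycles is Feng's theorem \cite{feng2009lyapunov}, recorded in the paper as Proposition \ref{prop: irred implies qm}. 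Together with bounded distortion, Proposition \ref{prop: unique eq state} then confirms that $\mu_\A$ is indeed the unique equilibrium state of $\Phi_\A$.

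It remains to deduce total ergodicity from the hypothesis. By Proposition \ref{prop: Morris equivalence}, we may take $\mu_\A$ to be mixing with respect to $\s$. Mixing under $\s$ implies mixing under every power $\s^n$ (directly from the definition), and mixing implies ergodicity, so $\mu_\A$ is ergodic under $\s^n$ for every $n \geq 1$; that is, $\mu_\A$ is totally ergodic. Theorem \ref{thm: A} then applies and delivers the $K$-property.

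There is essentially no serious obstacle: Theorem B is a packaging of Theorem \ref{thm: A} with the input results of Feng and Morris, the key observation being that mixing upgrades for free to total ergodicity. The only minor bookkeeping is, as indicated in the remark preceding the theorem, confirming that Morris's Proposition \ref{prop: Morris equivalence} --- stated originally for one-sided full shifts --- transfers to the setting used here via the natural extension arguments of Subsection \ref{sec: two sided}.
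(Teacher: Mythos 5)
Your proposal is correct and follows essentially the same route as the paper: the paper's proof likewise notes that bounded distortion holds with $C=1$ for locally constant cocycles, invokes Proposition \ref{prop: irred implies qm} for quasi-multiplicativity, observes that mixing (from Proposition \ref{prop: Morris equivalence}) is stronger than total ergodicity, and then applies Theorem \ref{thm: A}. The only difference is that you spell out the elementary step that mixing under $\s$ passes to every power $\s^n$, which the paper simply asserts.
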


In Conjecture 2 of \cite{morris2018ergodic}, Morris conjectured that the natural extension of every totally ergodic matrix equilibrium state for a certain collection of potentials has the Bernoulli property. Theorem \ref{thm: B} establishes partial progress towards this conjecture for the class of norm potentials of $\glr$ locally constant cocycles, and in Remark \ref{rem: other potentials}, we discuss how Theorem \ref{thm: A} applies to all potentials considered in \cite{morris2018ergodic}. We note that Theorem \ref{thm: B} is related to results of Feng \cite{feng2011equilibrium} where he establishes the $K$-property for some weighted equilibrium states.

The remaining results are concerned with the thermodynamic formalism of $\alpha$-\hol and fiber-bunched cocycles; see Definition \ref{defn: fb}. We denote the space of $\alpha$-\hol and fiber-bunched $\glr$-cocycles by $C^\alpha_b(\Sig,\glr)$. In particular, every locally constant cocycle falls into these categories, while the converse is not true.

Specific to fiber-bunched $\gltwo$-cocycles, in \cite{butler2019thermodynamic} Butler and the second-named author obtain a precise description of the equilibrium states for the norm potentials $\Phi_\A$. In particular, they have a complete characterization for when the norm potentials $\Phi_\A$ of fiber-bunched $\gltwo$-cocycles $\A$ fail to have unique equilibrium states.
By considering all cases depending on the number of equilibrium states, we show that all equilibrium states of fiber-bunched $\gltwo$-cocycles have the $K$-property.

\begin{thmx}\label{thm: C} Let $\A \colon \Sig \to \gltwo$ be a \hol continuous and fiber-bunched cocycle. Every ergodic equilibrium state of $\A$ is $K$ up to a period, that is, $(\Sig,\s,\mu_\A)$ is isomorphic to a $K$-system times a finite rotation. Indeed, the only setting when $\mu_\A$ is not $K$ is when $\A$ can be conjugated to another cocycle 
$$\B(x)= \begin{pmatrix}
0 & a(x) \\
b(x) & 0
\end{pmatrix}$$
such that $\a(x):=\log |a(\s x)b(x)|$ and $\b(x):=\log |b(\s x)a(x)|$ viewed as potentials over $(\Sig,\s^2)$ have the same pressures, but distinct equilibrium states $\mu_1$ and $\mu_2$.
\end{thmx}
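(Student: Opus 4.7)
The plan is to combine the classification of equilibrium states for fiber-bunched $\gltwo$-cocycles from \cite{butler2019thermodynamic} with Theorem \ref{thm: A}. That classification says that either $\Phi_\A$ has a unique equilibrium state $\mu_\A$, with $\Phi_\A$ quasi-multiplicative and of bounded distortion, or $\A$ is continuously conjugate to a cocycle $\B$ of the off-diagonal form in the statement, yielding two distinct ergodic equilibrium states. I will treat these two cases separately, with the second being precisely the exception announced in the theorem.

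In the unique case, the strategy is to show that $\mu_\A$ is totally ergodic and then invoke Theorem \ref{thm: A} to conclude the $K$-property. Total ergodicity should follow by contradiction: a nontrivial finite discrete spectrum of $\s$ on $L^2(\mu_\A)$ would force an $\s$-equivariant cyclic decomposition compatible with $\A$, which in dimension $2$ can only be realised through an off-diagonal conjugation, contradicting uniqueness. Once total ergodicity is established, Theorem \ref{thm: A} applies directly and yields that $\mu_\A$ is $K$.

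In the exceptional case, a direct computation gives that the squared cocycle is diagonal,
$$
\B(\s x)\B(x) = \operatorname{diag}\bigl(a(\s x)b(x),\, b(\s x)a(x)\bigr),
$$
so on $(\Sig, \s^2)$ the norm potential splits along the two diagonal entries and the dynamics is governed by the additive \hol potentials $\alpha, \beta$, which by hypothesis have equal pressure but distinct equilibrium states. Classical Bowen--Ruelle theory on the $\s^2$-mixing components of $\Sig$ gives that each of these is Bernoulli (in particular $K$) under $\s^2$. Symmetrising under $\s$ produces the two ergodic $\s$-invariant equilibrium states $\mu_1, \mu_2$ for $\A$; each has period $2$ and each is $K$ once the factor of $\Z/2\Z$ is quotiented out, giving the ``$K$ up to a period'' conclusion in this case.

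The main obstacle is the dichotomy argument in the unique case: showing that failure of total ergodicity of $\mu_\A$ structurally forces $\A$ to be conjugate to an off-diagonal form. This step is dimension-specific to $d=2$ and relies on translating the finite-order $L^2(\mu_\A)$-eigenfunctions of $\s$ into an invariant splitting of the cocycle; combining this with the Butler--Park classification of non-unique fiber-bunched $\gltwo$-cocycles then closes the argument. A secondary subtlety, in the exceptional case, is verifying that the conjugation bringing $\A$ to $\B$ preserves fiber-bunching and the \hol regularity of $a$ and $b$, so that $\alpha, \beta$ are genuine \hol additive potentials to which classical thermodynamic formalism applies.
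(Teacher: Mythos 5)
The broad outline you propose — split off an exceptional case and handle the rest via total ergodicity and Theorem~\ref{thm: A} — matches the paper in spirit, but the dichotomy you invoke is not the one in \cite{butler2019thermodynamic}, and the argument for total ergodicity in the ``unique'' case is based on a false heuristic.

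The Butler--Park classification is irreducible vs.\ reducible: if $\A$ is irreducible then $\Phi_\A$ is quasi-multiplicative and has a unique equilibrium state; if $\A$ is reducible then $\A$ is conjugate to an \emph{upper triangular} cocycle with diagonal entries $a,c$, and there are at most two ergodic equilibrium states coming from $\log|a|$ and $\log|c|$ (each Bernoulli by Bowen's theorem). The off-diagonal form in the theorem statement does not appear in that classification at all. It arises only after passing to $\A^2$ over $(\Sig,\s^2)$: when $\A$ is irreducible but $\A^2$ is reducible, one shows the $\A^2$-invariant line bundle $L_1$ and its image $L_2=\A L_1$ give a conjugation of $\A$ to a matrix that swaps coordinate axes, hence is off-diagonal, while $\B^2$ is diagonal with entries realizing the potentials $\alpha,\beta$ over $\s^2$.

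The more serious gap is your claim that in the unique case total ergodicity ``contradicts uniqueness.'' This is exactly backwards: in the exceptional case of the theorem $\A$ is still irreducible, $\Phi_\A$ still has a \emph{unique} equilibrium state $\mu_\A$ over $(\Sig,\s)$, and yet $\mu_\A$ is \emph{not} $\s^2$-ergodic — it decomposes as $\mu_\A = \tfrac12(\mu_1+\mu_2)$ with $\mu_1,\mu_2$ distinct $\s^2$-ergodic equilibrium states swapped by $\s$. So ``unique $\Rightarrow$ totally ergodic'' is false in this setting, and any argument relying on it cannot close. (This also means $\mu_1,\mu_2$ are not ``$\s$-invariant equilibrium states for $\A$'' as you write — only their average is $\s$-invariant.)

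What the paper actually does, and what your proposal is missing, is a two-step case analysis for irreducible $\A$: consider $\Phi_{\A^2}$ over $(\Sig,\s^2)$. If it has a unique equilibrium state, then $\mu_\A$ is $\s^2$-ergodic, and periods $n\geq 3$ are ruled out by the key quantitative input: if $\mu_\A$ fails to be $\s^n$-ergodic for minimal $n$, its ergodic decomposition under $\s^n$ provides $n$ distinct ergodic equilibrium states for $\Phi_{\A^n}$ over $(\Sig,\s^n)$, while Proposition~\ref{prop: BP} applied to $\A^n$ caps that number at $2$. Hence $n\leq 2$, forcing total ergodicity and, via Theorem~\ref{thm: A}, the $K$-property. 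If instead $\Phi_{\A^2}$ has two ergodic equilibrium states, one runs the off-diagonal construction and obtains the exceptional case. Your sketch has no analogue of the ``number of ergodic equilibrium states bounds the period'' step, and without it there is no way to exclude larger periods or to locate the exceptional case correctly.
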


Theorem \ref{thm: C} may be thought of as follows:  for fiber-bunched $\gltwo$-cocycles, the specified case is the only case where the ergodic equilibrium states for $\Phi_\A$ fail to be $K$.	

The final result of this paper applies to a large subset of fiber-bunched cocycles and their singular value potentials.
More specifically, Bonatti and Viana \cite{bonatti2004lyapunov} introduced a notion of typical cocycles among $C^\alpha_b(\Sig,\glr)$ and established that typicality serves as a sufficient condition for the simplicity of Lyapunov exponents with respect to any ergodic measures with continuous local product structure. Additionally, they showed that the set of typical cocycles is open and dense in $C^\alpha_b(\Sig,\glr)$; see Definition \ref{defn: typical} for the precise formulation of the typicality assumption.

In \cite[Theorem B]{park2019quasi}, the second-named author shows that for any $s \in [0,\infty)$ the singular value potentials $\Phi_{\A}^s$ of typical cocycles $\A$ have unique equilibrium states $\mu_{\A,s} \in \M(\s)$. By verifying the assumptions in Theorem \ref{thm: A}, we show that such  equilibrium states $\mu_{\A,s}$ have the $K$-property:

\begin{thmx}\label{thm: D} Let $\A \colon\Sig \to \glr$ be a \hol continuous and fiber-bunched cocycle. If $\A$ is typical, then for any $s \in [0,\infty)$ the unique equilibrium state $\mu_{\A,s} \in \M(\s)$ of $\Phi_\A^s$ has the $K$-property.
\end{thmx}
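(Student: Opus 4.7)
The plan is to apply Theorem \ref{thm: A} to the subadditive potential $\Phi_\A^s$. This requires verifying four conditions: subadditivity, bounded distortion, quasi-multiplicativity, and total ergodicity of the unique equilibrium state $\mu_{\A,s}$. Subadditivity is immediate from the submultiplicativity of singular value products for matrix cocycles. Bounded distortion of $\Phi_\A^s$ is the standard consequence of the fiber-bunched hypothesis, since the stable and unstable holonomies provide uniform comparisons between cocycle products over cylinders of equal length. Quasi-multiplicativity of $\Phi_\A^s$ for typical cocycles is precisely the content of Proposition \ref{prop: typical unique eq}, which simultaneously delivers the existence and uniqueness of $\mu_{\A,s}$.

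The main obstacle is establishing total ergodicity of $\mu_{\A,s}$. I would proceed by a \emph{power trick}: fix $n \geq 2$ and argue that $\mu_{\A,s}$ is the unique equilibrium state for $\Phi_\A^s$ over the dynamics $(\Sig,\s^n)$, which forces $\s^n$-ergodicity. Since the transition matrix $T$ is primitive, so is $T^n$, and hence $(\Sig,\s^n)$ admits a standard recoding as a mixing subshift of finite type. Viewing $\Phi_\A^s$ as a subadditive potential over $\s^n$, subadditivity and bounded distortion are inherited at once, since a cylinder of length $m$ under $\s^n$ is a cylinder of length $mn$ under $\s$. For quasi-multiplicativity under $\s^n$, one can either adjust the connecting word supplied by $\s$-quasi-multiplicativity (by extending its length to the next multiple of $n$ while preserving admissibility, at a cost controlled via subadditivity), or invoke Proposition \ref{prop: typical unique eq} directly for $\A$ regarded as a typical cocycle over $\s^n$, since the pinching and twisting conditions defining typicality are stable under iteration.

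Once Proposition \ref{prop: unique eq state} applies to the $\s^n$-system, we obtain a unique $\s^n$-equilibrium state for $\Phi_\A^s$. Combining $h_\mu(\s^n) = n h_\mu(\s)$ with the corresponding scaling of the subadditive Lyapunov integral yields $P_{\s^n}(\Phi_\A^s) = n P_\s(\Phi_\A^s)$, so $\mu_{\A,s}$ is itself a $\s^n$-equilibrium state. Affinity of the entropy and of the subadditive Lyapunov integral under the $\s^n$-ergodic decomposition then forces almost every ergodic component to also be a $\s^n$-equilibrium state, and uniqueness collapses the decomposition to a single measure, giving $\s^n$-ergodicity of $\mu_{\A,s}$. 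Since $n$ was arbitrary, $\mu_{\A,s}$ is totally ergodic, and Theorem \ref{thm: A} yields the $K$-property. The most delicate technical point I anticipate is verifying quasi-multiplicativity of $\Phi_\A^s$ under $\s^n$; the remainder is essentially bookkeeping against the results already proved in the paper.
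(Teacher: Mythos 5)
Your proposal is correct and follows essentially the same approach as the paper: apply Theorem~\ref{thm: A} after establishing total ergodicity via the power trick, observing that $\A^n$ remains typical over $(\Sig,\s^n)$ (via the same periodic orbit and homoclinic point) and hence has a unique $\s^n$-equilibrium state by Proposition~\ref{prop: typical unique eq}. The paper's Lemma~\ref{lem: nP} supplies the pressure scaling $P(\Phi_{\A^n}) = nP(\Phi_\A)$ you invoke; of your two suggested routes to quasi-multiplicativity over $\s^n$, the paper uses the second (applying Proposition~\ref{prop: typical unique eq} to $\A^n$ directly), which sidesteps the word-length bookkeeping required by the first.
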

We remark that unlike Theorem \ref{thm: B} and \ref{thm: C} where our results only apply to norm potentials $\Phi_\A$, the typicality assumption in Theorem \ref{thm: D} allows us to apply it to singular value potentials $\Phi_\A^s$ for all $s \in [0,\infty)$.

In Section \ref{sec: prelim}, we introduce and survey relevant preliminary results. In Section \ref{sec: K}, we establish sufficient criteria for the $K$-property in the subadditive setting. Then we prove the main theorems in Section \ref{sec: proof of main thms}.

\section{Preliminaries}\label{sec: prelim}
Throughout, $X$ is a compact metric space, $f : X\to X$ is a homeomorphism, and $\M(f)$ denotes the set of all $f$-invariant probability measures. 

\subsection{Mixing properties of invariant measures}
Although many of the results in this paper are concerned with the $K$-property, we make use of various mixing properties along the way. We give a brief introduction to the ones that we will use, in increasing order of strength. In many cases, there are many equivalent formulations of these definitions which we lack the space to include. For a more comprehensive discussion on various mixing properties, we refer the readers to \cite{Petersen}

\begin{defn}
A measure-preserving transformation $(X,f,\mu)$ is \textit{totally ergodic} if $(X,f^n,\mu)$ is ergodic for all $n \in \N$. 
\end{defn}

\begin{defn}
	We say $\mu \in \M(f)$ is \textit{weakly mixing} if for all measurable subsets $A,B \subseteq X$, there exists $E\subset\N$ with upper density $\bar{d}(E) = 0$ such that for $n\notin E$, 
	$$\lim\limits_{n\to\infty}\mu(f^nA\cap B) = \mu(A)\mu(B).$$ 
\end{defn}

Observe from this definition that if $\mu$ is weak mixing, then it is totally ergodic. The following is a now classical result that we will also make use of periodically.

\begin{prop}\label{prop: weak mixing iff}
$(X,f,\mu)$ is weak mixing if and only if $(X\times X,f\times f,\mu\times\mu)$ is ergodic.
\end{prop}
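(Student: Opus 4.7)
The plan is to prove this classical equivalence by reducing each direction to a Ces\`aro-averaged computation on measurable rectangles, then promoting it to all measurable sets by approximation.

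For the direction $(\Rightarrow)$, suppose $\mu$ is weakly mixing. To show that $\mu \times \mu$ is $f \times f$-ergodic, by standard $\pi$-system and approximation arguments it suffices to verify that for all measurable $A,B,C,D \subseteq X$,
$$\frac{1}{N}\sum_{n=0}^{N-1} (\mu \times \mu)\bigl((f\times f)^{-n}(A \times B) \cap (C \times D)\bigr) \to \mu(A)\mu(C)\mu(B)\mu(D).$$
The left-hand side factors as $\frac{1}{N}\sum_{n=0}^{N-1} \mu(f^{-n}A \cap C)\,\mu(f^{-n}B \cap D)$. Weak mixing supplies density-zero sets $E_1, E_2 \subseteq \N$ off which the two factors converge to $\mu(A)\mu(C)$ and $\mu(B)\mu(D)$ respectively; since $E_1 \cup E_2$ still has density zero and all terms are bounded by $1$, the Ces\`aro average converges to the product, as required.

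For the direction $(\Leftarrow)$, assume $\mu \times \mu$ is $f \times f$-ergodic. First note that $\mu$ itself is $f$-ergodic, since any $f$-invariant set $A$ produces the $(f\times f)$-invariant set $A \times X$. The key trick is to apply the mean ergodic theorem to the indicator $\mathbf{1}_{A \times A}$ under $f \times f$, which gives
$$\frac{1}{N}\sum_{n=0}^{N-1} \mu(f^{-n}A \cap B)^2 \;=\; \frac{1}{N}\sum_{n=0}^{N-1} (\mu\times\mu)\bigl((f\times f)^{-n}(A\times A) \cap (B\times B)\bigr) \;\to\; \mu(A)^2 \mu(B)^2,$$
while ergodicity of $\mu$ yields $\frac{1}{N}\sum_{n=0}^{N-1} \mu(f^{-n}A \cap B) \to \mu(A)\mu(B)$. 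Expanding the square, these two limits combine to
$$\frac{1}{N}\sum_{n=0}^{N-1} \bigl(\mu(f^{-n}A \cap B) - \mu(A)\mu(B)\bigr)^2 \to 0.$$

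The remaining step is the standard extraction lemma: if a bounded sequence of nonnegative reals $a_n$ satisfies $\frac{1}{N}\sum_{n=0}^{N-1} a_n \to 0$, then there exists $E \subseteq \N$ of upper density zero such that $a_n \to 0$ along $\N \setminus E$. Applied to $a_n = (\mu(f^{-n}A\cap B) - \mu(A)\mu(B))^2$, this produces exactly the density-zero exceptional set required by the definition of weak mixing in the paper. The only mildly delicate point, and the place where I would be most careful, is keeping track of the approximation from rectangles to arbitrary $(f \times f)$-invariant measurable sets in the forward direction, since product measure on the Borel $\sigma$-algebra of $X \times X$ is generated by rectangles and this step is where one uses that $X$ is a compact metric space; no new idea is needed beyond a standard monotone class argument.
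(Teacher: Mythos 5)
The paper does not supply a proof of this proposition; it simply cites it as a classical fact. Your argument is a correct and standard proof of it. The forward direction via factoring the Cesàro average on rectangles and absorbing the two density-zero exceptional sets is the usual one, and the backward direction correctly combines ergodicity of $\mu$ (obtained from the invariant set $A\times X$), the mean ergodic theorem applied to $\mathbf 1_{A\times A}$, expansion of the square, and the Koopman--von Neumann extraction lemma to recover the density-zero formulation of weak mixing used in the paper. One small correction of emphasis: the approximation step from rectangles to arbitrary measurable sets does not really use compactness of $X$; it only uses that the Cesàro-average functional is $L^1$-continuous in each argument uniformly in $N$ (by boundedness) together with the fact that finite unions of measurable rectangles are dense in the measure algebra of $\mu\times\mu$. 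Compactness (or, more precisely, second countability) is what identifies the Borel $\sigma$-algebra of $X\times X$ with the product $\sigma$-algebra, which is a separate and routine point.
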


Finally, we introduce the $K$-property. There are a myriad number of equivalent formulations, for details of which we refer to \cite{CFSS}.

\begin{defn}
	Let $(X,\mathcal{B},\mu)$ be a probability space, and let $f$ be a measure-preserving invertible transformation of $(X,\mathcal{B},\mu)$ 
	The system has the \textit{Kolmogorov property}, or simply the \textit{K-property}, if there exists a sub-$\sigma$-algebra $\mathcal{K} \subset \mathcal{B}$ satisfying $\mathcal{K} \subset f\mathcal{K}$, $\bigvee\limits_{i=0}^\infty f^i\mathcal{K} = \mathcal{B}$, and $\bigcap\limits_{i=0}^\infty f^{-i}\mathcal{K} = \{\emptyset,X\}$.
\end{defn}
An equivalent definition of independent interest is that of \textit{completely positive entropy}.
\begin{prop}
A measure $\mu\in \M(f)$ has the $K$-property if and only if it has completely positive entropy, that is, there are no non-trivial zero entropy factors.
\end{prop}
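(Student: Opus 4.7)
The plan is to prove this equivalence through the Pinsker $\sigma$-algebra $\Pi=\Pi(f,\mu)$, defined as the largest $f$-invariant sub-$\sigma$-algebra of $\mathcal{B}$ on which the induced action of $f$ has zero entropy. The key external input is the \emph{Rokhlin--Sinai theorem}: the $K$-property as formulated above holds if and only if $\Pi$ is $\mu$-trivial. More precisely, for any $\mathcal{K}\subseteq\mathcal{B}$ satisfying $\mathcal{K}\subseteq f\mathcal{K}$ and $\bigvee_{i=0}^\infty f^i\mathcal{K}=\mathcal{B}$, one has $\bigcap_{i=0}^\infty f^{-i}\mathcal{K}=\Pi$ modulo $\mu$.

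For the $(\Rightarrow)$ direction, assume $(X,f,\mu)$ has the $K$-property and let $\pi:(X,f,\mu)\to(Y,g,\nu)$ be a measurable factor with $h_\nu(g)=0$. The pullback $\pi^{-1}(\mathcal{B}_Y)$ is an $f$-invariant sub-$\sigma$-algebra, and the entropy of $f$ with respect to this sub-$\sigma$-algebra equals $h_\nu(g)=0$; by the maximality in the definition of $\Pi$, we have $\pi^{-1}(\mathcal{B}_Y)\subseteq\Pi$. The $K$-property together with Rokhlin--Sinai forces $\Pi$ to be trivial modulo $\mu$, so $\pi^{-1}(\mathcal{B}_Y)$ is trivial and the factor $(Y,g,\nu)$ reduces to a one-point system. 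Thus $(X,f,\mu)$ has no non-trivial zero-entropy factors.

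For the $(\Leftarrow)$ direction, assume completely positive entropy and consider the quotient system $(X/\Pi,\bar f,\bar\mu)$ determined by $\Pi$ itself. By the defining property of $\Pi$, this is a factor of $(X,f,\mu)$ with zero entropy. Completely positive entropy then forces this factor to be trivial, so $\Pi=\{\emptyset,X\}$ modulo $\mu$. A second application of Rokhlin--Sinai yields the $K$-property.

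The main obstacle is really the Rokhlin--Sinai theorem itself, which packages all of the nontrivial content; its proof uses the relative entropy formula and a martingale convergence argument applied to conditional entropies along the decreasing chain $\{f^{-i}\mathcal{K}\}_{i\geq 0}$. Beyond invoking this, the ingredients needed, namely the identification of pullback sub-$\sigma$-algebras with factors and the entropy identity $h_\mu(f,\pi^{-1}(\mathcal{B}_Y))=h_\nu(g)$, are standard manipulations from measure-theoretic entropy, so the proposition is essentially a reformulation of the Pinsker-algebra characterization of $K$-systems in the language of factors.
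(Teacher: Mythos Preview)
Your argument is correct and follows the standard route via the Pinsker $\sigma$-algebra and the Rokhlin--Sinai theorem. However, the paper does not actually prove this proposition: it is stated without proof, immediately after the definition of the $K$-property, as one of the ``myriad number of equivalent formulations'' for which the reader is referred to \cite{CFSS}. So there is nothing to compare against; your write-up supplies the standard proof that the paper chose to omit.
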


\begin{rem}
In particular, any $K$-system has positive measure-theoretic entropy.
\end{rem}

We say a system $(X,f,\mu)$ is \textit{Bernoulli} if it is measurably isomorphic to a Bernoulli shift. Without too much difficulty, one can use the above definition to show that every Bernoulli system is $K$.

\subsection{Base dynamical system}
Let $T$ be a $q \times q$ square-matrix with entries in $\{0,1\}$. We define $\Sig \subset \{1,\cdots, q\}^\Z$ to be the set of all bi-infinite sequences of $q$ symbols such that $ij$ is a word for $1\leq i,j\leq q$ if and only if $T_{ij} = 1$. An \textit{admissible word of length $n$} is a word $i_0\ldots i_{n-1}$ with $i_j \in \{1,\ldots,q\}$ such that $T_{i_j,i_{j+1}} = 1$ for all $0 \leq j \leq n-2$.
Let $\L$ be the collection of all admissible words. For $\I\in \L$, we denote its length by $|\I|$.
For each $n \in \N$, let $\L(n) \subset \L$ be the set of all admissible words of length $n$. For any $\I =  i_0\ldots i_{n-1} \in \L(n)$, we define the associated \textit{cylinder} by
$$[\I]=[ i_0\ldots i_{n-1}]:=\{y \in \Sig \colon y_j = i_j \text{ for all } 0 \leq j \leq n-1\}.$$

We endow $\Sig$ with the metric $d$ defined as follows: for $x = (x_i)_{i \in \Z},y = (y_i)_{i \in \Z} \in \Sig$, we have
$$d(x,y)  = 2^{-k},$$ 
where $k$ is the largest integer such that $x_i = y_i$ for all $|i| < k$. Equipped with such a metric, the left shift operator $\s$ becomes a hyperbolic homeomorphism of a compact metric space $\Sig$. Given $x\in\Sig$, the \textit{local stable set} of $x$ is
$$\Wloc^s(x) = \{y\in\Sig\mid x_i = y_i \text{ for } i\geq 0 \}$$
and analogously, the \textit{local unstable set} of $x$ is
$$\Wloc^u(x) = \{y\in\Sig\mid x_i = y_i \text{ for } i\leq 0\}.$$
These local stable and unstable sets extend to define global stable and unstable sets $\W^{s/u}(x)$, respectively, in the standard manner.

Finally, we will always assume that the adjacency matrix $T$ is \textit{primitive}, meaning that there exists $N>0$ such that all entries of $T^N$ are positive. The primitivity of $T$ is equivalent to $(\Sig,\s)$ being topologically mixing.

\subsection{Linear cocycles}

To any $\A \colon \Sig \to \Md$ and $n \in \N$, we define
$$\A^n(x):=\A(\s^{n-1}x)\ldots \A(x).$$
It is clear from the definition that the following \textcolor{black}{\textit{cocycle equation}} holds:
$$\A^{n+m}(x) := \A^n(\s^mx)\A^m(x) \text{ for all } n,m\in \N.$$
When the image of $\A$ is a subset of $\glr$, we define $\A^{0}(\cdot) \equiv I$ and $\A^{-n}(x) : = \big(\A^n(\s^{-n}x)\big)^{-1}$ for $n \in \N$ 
\textcolor{black}{so} that the cocycle equation holds for all $n,m \in \Z$. 

We now introduce two classes of cocycles appearing in Theorems \ref{thm: B}, \ref{thm: C}, and \ref{thm: D}. 
First is the class of locally constant cocycles. A \textit{locally constant cocycle} $\A$ is a cocycle whose generator $\A$ is locally constant. If $\A \colon \Sig \to \Md$ is locally constant, then from the compactness of $\Sig$, there exists $k\in \N$ such that $\A(x)$ depends only on the word $x_{-k}\ldots x_k \in \L(2k+1)$ for every $x = (x_i)_{i \in \Z} \in \Sig$. For any locally constant $\glr$-valued function $\A$ on $\Sig$, there exists a recoding of $\Sig$ to another subshift of finite type 
$\Sigma_{S}$ such that $\A$ is carried to a $\glr$-valued function on $\Sigma_{S}$ depending only on the 0-th entry $x_0$ of $x = (x_i)_{i\in \Z} \in \Sigma_{S}$.

\begin{rem}
For simplicity, we assume that all locally constant cocycles considered in this paper depend only on the 0-th entry.
\end{rem}

The second class consists of fiber-bunched cocycles:

\begin{defn}\label{defn: fb} An $\alpha$-\hol cocycle $\A \in C^\alpha(\Sig,\glr)$ is \textit{fiber-bunched} if for every $x \in \Sig$,
	$$\|\A(x)\| \cdot \|\A(x)^{-1}\| < 2^\alpha.$$
\end{defn}

Clearly, conformal cocycles are fiber-bunched. Moreover, small perturbations of conformal cocycles are also fiber-bunched; in fact, fiber-bunched cocycles may be thought of as nearly conformal cocycles. 
We denote the set of $\alpha$-\hol and fiber-bunched cocycles by
$C^\alpha_b(\Sig,\glr).$ From the definition $C^\alpha_b(\Sig,\glr)$, is an open subset of $C^\alpha(\Sig,\glr)$.

The fiber-bunching assumption is mainly used for the convergence of the \textit{canonical stable/unstable holonomy} $H^{s/u}_{x,y}$: for any $y \in \Wloc^{s/u}(x)$, 
\begin{equation}\label{eq: canonical holonomies}
H^s_{x,y} :=\lim\limits_{n \to \infty} \A^n(y)^{-1}\A^n(x) ~\text{ and }~H^u_{x,y}:= \lim\limits_{n \to -\infty} \A^n(y)^{-1}\A^n(x).
\end{equation}
Moreover, the canonical holonomies vary \hol continuously in the basepoints $x,y\in \Sig$ with $y \in \Wloc^{s/u}(x)$: there exists $C>0$ such that
\begin{equation}\label{eq: hol holder}
\|H^{s/u}_{x,y} - I\| \leq C\cdot d(x,y)^{\alpha}.
\end{equation}
See \cite{kalinin2013cocycles} for further details.

It can be easily checked that the canonical stable holonomies $H^{s}_{x,y}$ satisfy the following properties:
\begin{enumerate}
	\item $H^s_{x,x} = I$ and $H^s_{y,z}\circ H^s_{x,y} = H^s_{x,z}$ for any $y,z \in \Wloc^s(x)$,
	\item $\A(x) = H^s_{\s y,\s x} \circ \A(y) \circ H^s_{x,y}$,
	\item $H^s\colon (x,y) \mapsto H^s_{x,y}$ is continuous.
\end{enumerate}
Likewise, the canonical unstable holonomies $H^u_{x,y}$ satisfy the analogous properties. Using the second property, the canonical holonomies $H^{s/u}$ can be defined for $y \in \W^{s/u}(x)$ as well; i.e., for $y$ not necessarily in $\Wloc^{s/u}(x)$ but belonging to $\W^{s/u}(x)$.


We now formulate the typicality assumption appearing in Theorem \ref{thm: D}. Consider any periodic point $p \in \Sig$ and a homoclinic point $z \in \W^s(p)\cap \W^u(p) \setminus\textcolor{black}{\{p\}}$. We define the \textit{holonomy loop} $\psi_p^z$ as the composition of the unstable holonomy from $p$ to $z$ and the stable holonomy from $z$ to $p$:
$$
\psi_p^z  := H^s_{z,p}\circ H^u_{p,z}.
$$

The following definition is a slight variation of typicality first introduced in \cite{bonatti2004lyapunov}; this version of typicality is identical to the definition which appeared in \cite{park2019quasi}.
\begin{defn}\label{defn: 1-typical}
	Let $\A \in C^\alpha_b(\Sig,\glr)$ be a fiber-bunched cocycle and $H^{s/u}$ be its canonical holonomies. We say that $\A$ is \textit{1-typical} if it satisfies the following two extra conditions:
	\begin{enumerate}
		\item there exists a periodic point $p$ such that $P:=\A^{\text{per}(p)}(p)$ has simple real eigenvalues of distinct norms. Let $\{v_i\}_{1\leq i \leq d}$ be the eigenvectors of $P$.
		\item there exists a homoclinic point $z$ of $p$ such that $\psi_p^z$ twists the \textcolor{black}{eigendirections} of $P$ into general position: 
		for any $1 \leq i,j \leq d$, the image $\psi_{p}^z(v_i)$ does not lie in any hyperplane $\WW_j$ spanned by all eigenvectors of $P$ other than $v_j$. Equivalently, the coefficients $c_{i,j}$ in 
		$$\psi_{p}^z(v_i) = \sum\limits_{1 \leq j \leq d}c_{i,j} v_j,$$
		are nonzero for all $1 \leq i,j \leq d$.
	\end{enumerate}
These two conditions in the above definition are often called \textit{pinching} and \textit{twisting}, respectively. 
\end{defn}

For each $1 \leq t \leq d$, we denote by $\A^{\wedge t}$ the action of $\A$ on the exterior product $(\R^d)^{\wedge t}$.
Then the exterior product cocycles $\A^{\wedge t},~t \in \{1, \ldots, d\}$, also admit stable and unstable holonomies, namely $(H^{s/u})^{\wedge t}$. So, for a 1-typical function $\A$, we consider similar conditions appearing in Definition \ref{defn: 1-typical} on $\A^{\wedge t}$.

\begin{defn}\label{defn: t-typical} Let $\A \in C^\alpha_b(\Sig,\glr)$ be 1-typical.
	For $2 \leq t \leq d-1$, we say $\A$ is \textit{t-typical} if the same points $p,z \in \Sig$ from Definition \ref{defn: 1-typical} satisfy 
	\begin{enumerate}
		\item all the products of $t$ distinct eigenvalues of $P$ are distinct;
		\item the induced map $(\psi_p^z)^{\wedge t} $ on $(\R^d)^{\wedge t}$ satisfies the corresponding twisting condition to that given by Definition \ref{defn: 1-typical} with respect to the eigenvectors $\{v_{i_1}\wedge \ldots \wedge v_{i_t}\}_{1\leq i_1<\ldots<i_t \leq d}$ of $P^{\wedge t}$.
	\end{enumerate}
\end{defn}

\begin{defn}\label{defn: typical}
	We say $\A \in C^\alpha_b(\Sig,\glr)$ is \textit{typical} if $\A$ is $t$-typical for all $1\leq t \leq d-1$.
\end{defn}
\begin{rem}\label{rem: typicality comments}
	Typicality was first introduced by Bonatti and Viana  \cite{bonatti2004lyapunov} for fiber-bunched $\text{SL}_d(\R)$-cocycles as a sufficient condition to guarantee the simplicity of Lyapunov exponents with respect to any ergodic invariant measures with continuous local product structure and full support. They also showed that the set of typical cocycles is open and dense in $C^\alpha_b(\Sig,\text{SL}_d(\R))$ and this property easily generalizes to fiber-bunched $\glr$-cocycles. 

	The pinching and twisting assumptions of typicality are designed to replicate the effects of proximality and strong irreducibility from Furstenberg's theorem \cite{furstenberg1963noncommuting} on positivity of the top Lyapunov exponent.
\end{rem}

\subsection{Thermodynamic Formalism}
In this section, we will introduce some of the key ideas of both additive and subadditive thermodynamic formalism that we will use. For shorthand, we will often refer to the metric
$$d_n(x,y) := \max_{0\leq i\leq n-1}d(f^ix,f^iy)$$
and to \textit{Bowen balls}
$$B_n(x,\ep) := \{y \in X \mid d_n(x,y) \leq \ep\}.$$
For any $n\in\N$ and $\ep > 0$, $E\subset X$ is \textit{$(n,\ep)$-separated} if given $x,y\in E$, $d_n(x,y) \geq \ep$. Such a set is \textit{maximally} $(n,\ep)$-separated if for any $z\notin E$, $d_n(x,z) \leq \ep$ for some $x\in E$.

Whenever we look at at a product space, we take the metric to be the maximum of the distance in each coordinate:
\begin{equation}\label{eq: product metric}
d((x_1,y_1),(x_2,y_2)): = \max\{d(x_1,x_2),d(y_1,y_2)\}.
\end{equation}

As mentioned in the introduction, we are interested in studying mixing properties of unique equilibrium states. One of the ``ideal'' results is the following, a proof of which can be found in \cite[Theorem 4.1]{bowen1975ergodic}. 
\begin{prop}\label{prop: bowen thm}
Let $f : X\to X$ be a transitive Anosov homeomorphism, and let $\vp : X\to\R$ be a H\"older continuous potential. Then $\vp$ has a unique equilibrium state $\mu \in \M(f)$, and $\mu$ is Bernoulli.
\end{prop}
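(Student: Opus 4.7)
The plan is to reduce the statement to its well-understood symbolic counterpart via a Markov partition and then assemble existing pieces from Ruelle--Perron--Frobenius theory together with Ornstein's Bernoulli machinery. First, I would invoke the Sinai--Bowen construction of a Markov partition for the transitive Anosov homeomorphism $f$: this produces a mixing subshift of finite type $(\Sig,\s)$ and a finite-to-one surjective factor map $\pi\colon\Sig\to X$ with $\pi\circ\s=f\circ\pi$, and the map $\pi$ is injective outside a $\s$-invariant set that is null for every invariant measure of full support (or at least for the measures we ultimately produce). The next reduction is to observe that since $\vp$ is \hol on $X$ and $\pi$ is \hol (with respect to the symbolic metric $d$), the pullback $\vp\circ\pi$ is \hol on $\Sig$. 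Thus the problem is transferred to the problem of producing and studying a unique equilibrium state for a \hol potential on a mixing SFT.

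On the symbolic side, I would apply Ruelle's Perron--Frobenius theorem: for a \hol potential on a mixing SFT, the normalized transfer operator has a spectral gap on a suitable space of \hol (or bounded-variation) observables. This yields a unique equilibrium state $\tilde{\mu}\in\M(\s)$, which is a Gibbs measure in the sense that there exist $C>1$ and $P=P(\vp\circ\pi)$ with
\[
C^{-1}\le \frac{\tilde{\mu}([\I])}{\exp\!\big(-n P+\sum_{k=0}^{n-1}\vp\circ\pi(\s^k x)\big)}\le C
\]
for every cylinder $[\I]$ of length $n$ and every $x\in[\I]$. Pushing forward via $\pi$ and using the bounded-to-one property together with the variational principle for entropy under factor maps, the measure $\mu:=\pi_\ast\tilde\mu$ is an equilibrium state for $\vp$ on $X$; any other equilibrium state on $X$ lifts to an equilibrium state on $\Sig$ by the standard argument combining finite-to-one factor maps with the Abramov/Rokhlin formula, so uniqueness transfers as well.

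Finally, to upgrade $\mu$ from unique equilibrium state to \emph{Bernoulli}, I would work again on the symbolic side. The spectral gap gives exponential decay of correlations for \hol observables with respect to $\tilde\mu$; in particular $\tilde\mu$ is mixing of all orders and has completely positive entropy, hence is $K$. The hardest conceptual step is then to pass from $K$ to Bernoulli: for this I would verify Ornstein's very weak Bernoulli (VWB) criterion for the natural partition into $1$-cylinders on the SFT. The Gibbs estimate above, combined with $\psi$-mixing (which is a consequence of the spectral gap), implies that the process is weakly Bernoulli, and weakly Bernoulli processes are VWB and therefore Bernoulli by Ornstein's isomorphism theorem. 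Pushing forward under $\pi$ preserves being isomorphic to a Bernoulli shift, so $(X,f,\mu)$ is Bernoulli.

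The main obstacle is not any single estimate but the verification that the Markov-partition reduction is clean enough to transport both uniqueness \emph{and} the Bernoulli property downstairs: one must be careful that the boundary of the Markov partition (where $\pi$ is not injective) carries zero mass for $\tilde\mu$ so that $\pi$ is in fact a measure-theoretic isomorphism between $(\Sig,\s,\tilde\mu)$ and $(X,f,\mu)$. This follows because the boundary is a proper closed invariant set that, by the Gibbs property, has measure strictly less than $1$ and hence, by ergodicity, measure zero; with this in hand the remaining steps are exactly the classical package and proceed as sketched.
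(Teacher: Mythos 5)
Your proposal is correct and follows essentially the same classical route—Sinai--Bowen Markov coding, Ruelle--Perron--Frobenius theory for uniqueness and the Gibbs property, and the weak-Bernoulli/Ornstein package for the Bernoulli property—that the paper cites from \cite[Theorem 4.1]{bowen1975ergodic}. The paper gives no proof of its own for this standard result beyond the citation and the remark that it relies on the construction of a Markov coding, so there is nothing to compare beyond noting that your sketch faithfully reconstructs that argument.
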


The proof of this relies on the construction of a Markov coding, which in turn establishes the Bernoulli property. In general, however, the existence, uniqueness, as well as the mixing properties of equilibrium states are not well-known. While the existence of equilibrium states is often guaranteed under mild conditions such as entropy expansivity which implies upper semi-continuity of the entropy map \cite{bowen1972entropy, misiurewicz1976topological} or $C^\infty$-smoothness of the system \cite{newhouse1989continuity}, other properties of equilibrium states are harder to come by.

Even when uniqueness is guaranteed, the equilibrium states may not have strong mixing properties as in Proposition \ref{prop: bowen thm}. For instance, Bowen has shown the following theorem which guarantees uniqueness of equilibrium states, but stops short of showing the Bernoulli property.

\begin{prop}\cite{bowen1974some}\label{prop: bowen general}
Let $f : X\to X$ be expansive and have specification, and suppose that $\vp: X\to\R$ has the Bowen property, that is, for all $\ep > 0$, there exists $K$ such that for all $n\in\N$ and $x\in X$,
$$\sup\left\{\left|\sum_{i=0}^{n-1}\vp(f^ix) - \vp(f^iy)\right| : d_n(x,y)\leq \ep \right\}\leq K.$$
Then there is a unique equilibrium state for $\vp$.
\end{prop}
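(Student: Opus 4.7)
The plan is to build the equilibrium state directly by the standard Bowen recipe and then extract uniqueness from a Gibbs-type two-sided bound on Bowen balls. First I would fix $\varepsilon>0$ smaller than the expansivity constant, so that $(n,\varepsilon)$-separated sets compute the pressure correctly and Bowen balls $B_n(x,\varepsilon)$ have bounded intersection multiplicity. For each $n$, let $E_n$ be a maximal $(n,\varepsilon)$-separated set and put
$$\mu_n := \frac{1}{Z_n}\sum_{x\in E_n} e^{S_n\vp(x)}\,\delta_x,\qquad Z_n := \sum_{x\in E_n}e^{S_n\vp(x)},$$
where $S_n\vp(x)=\sum_{i=0}^{n-1}\vp(f^ix)$. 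Then I would pass to a weak-$*$ accumulation point $\mu$ of the Cesàro averages $\frac{1}{n}\sum_{k=0}^{n-1}f^k_*\mu_n$, which is automatically $f$-invariant, and along this subsequence $\tfrac{1}{n}\log Z_n\to P(\vp)$.

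The main step is to promote $\mu$ to a Gibbs measure: I would show there exists $K>0$ so that
$$K^{-1}\;\le\;\frac{\mu(B_n(x,\varepsilon))}{\exp(-nP(\vp)+S_n\vp(x))}\;\le\;K\qquad\text{for all }x\in X,\ n\in\N. \tag{$\star$}$$
The upper bound uses the Bowen property directly: any $(n,\varepsilon)$-separated point in $B_n(x,\varepsilon)$ has $S_n\vp$-value within $K$ of $S_n\vp(x)$, so $\mu_m(B_n(x,\varepsilon))\lesssim e^{S_n\vp(x)}Z_{m-n}/Z_m$, and letting $m\to\infty$ controls the numerator by $e^{-nP(\vp)+S_n\vp(x)}$. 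The lower bound is where specification enters: given any point $y$, one uses specification to stitch the orbit of $y$ (of length $n$) to an $(m,\varepsilon)$-separated orbit, producing enough distinct partial orbits hitting a slight thickening of $B_n(x,\varepsilon)$ to give a matching lower bound on $Z_m$, again using the Bowen property to convert $S_n\vp$ along the glued pieces.

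From ($\star$) I would derive the variational equality by a standard computation: summing the Gibbs bound over a maximal $(n,\varepsilon)$-separated set shows $h_\mu(f)+\int\vp\,d\mu\ge P(\vp)$, so $\mu$ is an equilibrium state. For uniqueness, suppose $\nu$ is another equilibrium state; I would first reduce to the ergodic case and then combine ($\star$) with the Brin–Katok local entropy formula to conclude that for $\nu$-a.e.\ $x$,
$$\lim_{n\to\infty}-\frac{1}{n}\log\nu(B_n(x,\varepsilon))=h_\nu(f)=P(\vp)-\int\vp\,d\nu.$$
Comparing with ($\star$) forces $\nu\ll\mu$ with Radon–Nikodym derivative bounded above and below by $K^{\pm 1}$, and ergodicity of $\mu$ then yields $\nu=\mu$.

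The hardest step is ($\star$), specifically the lower bound, because one needs specification to realize \emph{every} orbit segment as the initial piece of a separated orbit without losing more than a bounded factor in the weight $e^{S_n\vp}$; this is exactly where the Bowen property is essential, since it guarantees that the glue words contribute only an $O(1)$ error to the Birkhoff sums. The passage from ($\star$) to uniqueness is then almost formal, and ergodicity of $\mu$ can be read off from the Gibbs property together with the fact that $\mu$-measures of Bowen balls decay uniformly, giving mixing of cylinders and hence triviality of the $f$-invariant $\sigma$-algebra.
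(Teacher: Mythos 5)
Your construction of $\mu$ from weighted empirical measures on maximal separated sets, and the derivation of the two-sided Gibbs bound $(\star)$ from specification plus the Bowen property, faithfully follow Bowen's original argument and are fine. The gap is in the final uniqueness step. Brin--Katok only gives the \emph{exponential rate} of decay of $\nu(B_n(x,\ep))$, with an $x$-dependent subexponential error, while $(\star)$ controls $\mu(B_n(x,\ep))$ up to multiplicative constants relative to $e^{-nP(\vp)+S_n\vp(x)}$. Combining the two (via Birkhoff for $\nu$-a.e.\ $x$) only tells you that $\nu(B_n(x,\ep))/\mu(B_n(x,\ep))$ is subexponential in $n$; it does \emph{not} force $\nu\ll\mu$, and certainly not a Radon--Nikodym derivative pinched between $K^{-1}$ and $K$. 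Matching decay exponents are entirely compatible with mutual singularity (a subexponential factor in the ratio can and does diverge), so the sentence ``Comparing with $(\star)$ forces $\nu\ll\mu$'' is unjustified. You also invoke ergodicity of $\mu$ before establishing it.

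Bowen's actual uniqueness argument (Lemma~8 of his paper), which the present paper adapts to the subadditive setting in Lemma~\ref{lem: subadditive Bowen lemma 8}, sidesteps this. One uses that two distinct ergodic measures are mutually singular to produce an $f$-invariant set $B$ with $\nu(B)=1$, $\mu(B)=0$, approximates $B$ by unions $\U_n$ of Bowen-ball partition elements, and combines the inequality $nP(\vp)\le H_\nu(\xi_n)+\int S_n\vp\,d\nu$ with the Gibbs property of $\mu$ and a Jensen-type inequality to obtain a lower bound of the form
\begin{equation*}
-\,\mathrm{const}\ \le\ \nu(\U_n)\log\mu(\U_n)+\nu(\U_n^c)\log\mu(\U_n^c),
\end{equation*}
uniformly in $n$. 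Since $\nu(\U_n)\to 1$ and $\mu(\U_n)\to 0$, the right-hand side tends to $-\infty$, a contradiction. You should replace the Brin--Katok step with this entropy/Jensen argument; the rest of your outline then stands.
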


Ledrappier then showed that these equilibrium states have the $K$-property by means of the following proposition.
\begin{prop}\cite[Proposition 1.4]{Ledrappier1977K}\label{prop: Led K}
Let $(X,f)$ be asymptotically entropy expansive and let $\vp : X\to\R$ be continuous. Suppose that $(X\times X,f\times f)$ has a unique equilibrium state for the potential $\Phi(x,y) = \vp(x) + \vp(y)$. Then the unique equilibrium state for $\vp$ has the $K$-property.
\end{prop}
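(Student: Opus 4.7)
The plan is a contradiction argument: assume $\mu$ is the unique equilibrium state for $\vp$ on $X$ but fails the $K$-property, and produce a second equilibrium state for $\Phi$ on $X\times X$ distinct from $\mu\times\mu$, using a relatively independent joining over a zero-entropy factor. The first step is to identify $\mu\times\mu$ as the unique equilibrium state for $\Phi$: asymptotic entropy expansivity passes to products, so the entropy map on $\M(f\times f)$ is upper semicontinuous and equilibrium states exist, and for any $\lambda'\in\M(f\times f)$ the inequality
$$h_{\lambda'}(f\times f)+\int\Phi\,d\lambda'\le\sum_{i=1}^2\Bigl(h_{\pi_i\lambda'}(f)+\int\vp\,d(\pi_i\lambda')\Bigr)\le 2P(\vp)$$
together with uniqueness of the equilibrium state for $\vp$ on $X$ forces any equilibrium state of $\Phi$ to have both marginals equal to $\mu$; by the uniqueness hypothesis in the proposition, such an equilibrium state must equal $\mu\times\mu$, and $P(\Phi)=2P(\vp)$.

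Now suppose $\mu$ is not $K$, so it admits a non-trivial factor $\pi\colon(X,f,\mu)\to(Y,g,\nu)$ with $h_\nu(g)=0$ (take $\nu$ to be $\mu$ restricted to the Pinsker $\sigma$-algebra; the degenerate case $h_\mu(f)=0$ with $\mu$ not a point mass is handled by letting $\pi$ be the identity factor). Disintegrate $\mu=\int \mu_y\,d\nu(y)$ and form the relatively independent joining
$$\lambda:=\int_Y \mu_y\times\mu_y\,d\nu(y)\in\M(f\times f).$$
Then $\lambda$ has both marginals equal to $\mu$, is supported on $\{(x_1,x_2):\pi(x_1)=\pi(x_2)\}$, and differs from $\mu\times\mu$ because their projections to $Y\times Y$ are the diagonal measure $\Delta_*\nu$ and the product $\nu\times\nu$, respectively, which disagree whenever $\nu$ is not a point mass. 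Since the marginals of $\lambda$ equal $\mu$, we have $\int\Phi\,d\lambda=2\int\vp\,d\mu$, so it remains only to show that $\lambda$ has the same entropy as $\mu\times\mu$ to reach a contradiction with uniqueness.

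The central technical step, and the main obstacle, is the entropy identity
$$h_\lambda(f\times f)=h_\nu(g)+2\bigl(h_\mu(f)-h_\nu(g)\bigr)=2h_\mu(f).$$
This follows from two applications of the Abramov--Rokhlin formula: once on $(X\times X,f\times f,\lambda)$ over the common factor $(Y,g,\nu)$, yielding $h_\lambda(f\times f)=h_\nu(g)+h_\lambda(f\times f\mid\pi)$; and once on $(X,f,\mu)$ to rewrite the fiber entropy as $h_\mu(f\mid\pi)=h_\mu(f)-h_\nu(g)$. The crucial observation needed is that under the relatively independent joining the two coordinates are conditionally independent given $Y$, so their fiber entropies add and $h_\lambda(f\times f\mid\pi)=2h_\mu(f\mid\pi)$. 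Once $h_\nu(g)=0$ is plugged in, the identity reduces to $h_\lambda(f\times f)=2h_\mu(f)=h_{\mu\times\mu}(f\times f)$, so $\lambda$ is a second equilibrium state for $\Phi$, contradicting the hypothesis and completing the proof.
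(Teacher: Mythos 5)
Your argument is correct and follows essentially the same route the paper takes in its proof of the subadditive analogue (Proposition~\ref{prop: Ledrappier subadditive}), which is itself Ledrappier's original argument: construct the relatively independent self-joining of $\mu$ over the Pinsker factor, check it differs from $\mu\times\mu$, verify its entropy is $2h_\mu(f)$ and its integral of $\Phi$ is $2\int\vp\,d\mu$, and contradict uniqueness of the product equilibrium state. The only cosmetic difference is that the paper cites the entropy identity $h_\lambda(f\times f)=2h_\mu(f)$ to Ledrappier and \cite{call2020}, whereas you spell it out via Abramov--Rokhlin together with additivity of fiber entropy for relatively independent joinings; and you add a preliminary paragraph identifying $\mu\times\mu$ as the unique product equilibrium state, which the paper handles implicitly via its Corollary that $\mu\times\mu$ is always an equilibrium state for the product potential.
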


We now present some definitions and results in subadditive thermodynamic formalism that are already known. Consider a sequence of continuous functions $\Phi =\{\log\vp_n\}_{n\in\N}$ on $(X,f)$. We say $\Phi$ is \textit{subadditive} if
\begin{equation}\label{eq: subadditive 2}
\log\vp_{m+n} \leq \log \vp_m +\log\vp_n \circ f^m
\end{equation}
for all $m,n \in \N$. 

A subadditive potential is a natural generalization of the Birkhoff sum of an additive potential in the following sense: given a continuous potential $\vp \colon X \to \R$ and denoting its $n$-th Birkhoff sum by $S_n\vp$, we obtain an equality in \eqref{eq: subadditive 2} if we replace each $\log\vp_n$ by $S_n\vp$.

Then following the definition of \cite{cao2008thermodynamic}, we define the \textit{topological pressure} of $\Phi$ as
$$P(\Phi) = \lim_{\ep\to 0}\limsup_{n\to\infty}\frac{1}{n}\log\sup\Big\{\sum_{x\in E}\vp_n(x)\mid E \subset X \text{ is } (n,\ep) \text{-separated}\Big\}.$$
The convergence of the limit is guaranteed by the subadditivity of $\Phi$. 

There is another definition for the subadditive pressure introduced by Barreira \cite{barreira1996non} using open covers. While it is not known whether two notions of the subadditive pressure coincide in the most general situations, they are shown to be equal when the entropy map $\mu \mapsto h_\mu(f)$ is upper semi-continuous \cite{cao2008thermodynamic}, which is true in our setting.

As mentioned in the introduction, it was shown in \cite{cao2008thermodynamic} that the subadditive pressure satisfies the following variational principle: if $(X,f)$ has finite topological entropy, then
$$
P(\Phi) = \sup\limits_{\mu \in \M(f)} P_\mu(\Phi)
$$
where $\displaystyle P_\mu(\Phi):=h_\mu(\s) + \lim\limits_{n \to \infty}\frac{1}{n} \int \log\vp_n\,d\mu.$ Similar to additive potentials, the existence of equilibrium states for subadditive potentials can be guaranteed by mild conditions on the base such as the upper semi-continuity of the entropy map. However, the questions on uniqueness and mixing properties are more subtle.

We also will make use of the subadditive versions of the Gibbs property as well as the bounded distortion property. A probability measure $\mu$ on $X$ has the \textit{subadditive Gibbs property} with respect to $\Phi$ if for any $\ep>0$ there exists $C \geq 1 $ such that for all $x\in X$ and $n\geq 0$,
	\begin{equation}\label{eq: Gibbs 2}
C^{-1} \leq \frac{\mu(B_n(x,\ep))}{e^{-nP(\Phi)}\vp_n(x)}\leq C.
\end{equation}
If just the lower inequality holds, we say that $\Phi$ has the \textit{lower subadditive Gibbs property}.
A subadditive potential $\Phi = \{\log\vp_n\}$ on $X$ has \textit{bounded distortion} if there exists $C \geq 1$ such that for all $\ep > 0$ sufficiently small, $x\in X$, $n\in \N$, and $y,z\in B_n(x,\ep)$, we have	
\begin{equation}\label{eq: bdd distortion}
C^{-1} \leq \frac{\vp_n(y)}{\vp_n(z)}\leq C.
\end{equation}

\begin{rem}
This is a stronger property than that referred to in \cite{cao2008thermodynamic}, and should be thought of as analogous to the Bowen property for additive potentials in Proposition \ref{prop: bowen general}
\end{rem}

While Bowen's theorem \ref{prop: bowen general} does not extend directly to general subadditive potentials, it does generalize to quasi-multiplicative subadditive potentials over uniformly hyperbolic base dynamics:

\begin{prop}\cite[Theorem 5.5]{feng2011equilibrium}\label{prop: unique eq state}
Let $\Phi = \{\log \vp_n\}_{n \in \N}$ be a subadditive potential over $(\Sig,\s)$ with bounded distortion. If $\Phi$ is quasi-multiplicative, then $\Phi$ has a unique equilibrium state $\mu \in \M(\s)$. Moreover, $\mu$ has the subadditive Gibbs property: there exists $C \geq 1$ such that for any $n\in \N$, $\I \in \L(n)$, and $x \in [\I]$, 
\begin{equation}\label{eq: Gibbs}
C^{-1}\leq \frac{\mu([\I])}{e^{-nP(\Phi)}\vp_n(x)} \leq C.
\end{equation}
\end{prop}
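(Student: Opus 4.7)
The plan is to construct a $\s$-invariant probability measure $\mu$ satisfying the subadditive Gibbs property \eqref{eq: Gibbs}, and then derive uniqueness and the equilibrium state property as consequences. First I would control the partition function $Z_n := \sum_{\I \in \L(n)} \wt{\Phi}(\I)$. Subadditivity of $\Phi$ together with bounded distortion yields $Z_{m+n} \leq C Z_m Z_n$ for a uniform constant $C$, so by Fekete's lemma $\tfrac{1}{n}\log Z_n$ converges to a limit which one checks, via an $(n,\ep)$-separated-set comparison, equals $P(\Phi)$. Quasi-multiplicativity provides the matching reverse bound: summing $\wt{\Phi}(\I \K \J) \geq c\,\wt{\Phi}(\I)\wt{\Phi}(\J)$ over $\I \in \L(m)$ and $\J \in \L(n)$, and noting that the connecting words $\K$ have bounded length (hence bounded multiplicity in the concatenations), gives $Z_{m+n+j} \geq c' Z_m Z_n$ for some $j \leq k$. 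Together these establish $Z_n \asymp e^{n P(\Phi)}$ up to a uniform multiplicative constant.

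Next I would build a candidate measure. For each $n$, pick representatives $x_\I \in [\I]$ for $\I \in \L(n)$ and set $\nu_n := Z_n^{-1} \sum_{\I \in \L(n)} \wt{\Phi}(\I)\,\delta_{x_\I}$; averaging $\mu_n := \tfrac{1}{n} \sum_{i=0}^{n-1} \s^i_* \nu_n$ and extracting a weak-$\ast$ subsequential limit produces a $\s$-invariant probability measure $\mu$. To verify the lower Gibbs bound at a cylinder $[\J]$ with $\J \in \L(m)$, I would apply quasi-multiplicativity to concatenate any prefix $\I_1 \in \L(\ell)$ and suffix $\I_2 \in \L(n-\ell-m-2k)$ with $\J$ through bridges of length at most $k$, producing a family of admissible words $\I_1 \K_1 \J \K_2 \I_2$ whose representatives lie in $\s^{-\ell-k}[\J]$; summing the weights and using $Z_n \asymp e^{nP(\Phi)}$ gives $\mu_n(\s^{-\ell-k}[\J]) \geq c_1 e^{-mP(\Phi)} \wt{\Phi}(\J)$, which after the Cesaro averaging yields the lower bound for $\mu([\J])$. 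The upper bound is analogous but uses only subadditivity. Bounded distortion then converts $\wt{\Phi}(\J)$ to $\vp_m(x)$ for any $x \in [\J]$ up to a uniform constant, yielding \eqref{eq: Gibbs} for $\mu$.

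With $\mu$ in hand, the equilibrium property follows by inserting the Gibbs bound into $h_\mu(\s) = \lim \tfrac{1}{n} H_\mu(\beta_n)$, where $\beta_n$ is the partition into length-$n$ cylinders: one finds $h_\mu(\s) + \lim \tfrac{1}{n}\int \log \vp_n\,d\mu = \lim \tfrac{1}{n}\log Z_n = P(\Phi)$. Ergodicity of $\mu$ is a standard consequence of the two-sided Gibbs bound combined with quasi-multiplicativity, via a Knopp-style zero-one argument that inserts a bounded-length bridge from an arbitrary cylinder into any $\s$-invariant set of positive measure. The main obstacle I expect is uniqueness, because a hypothetical second equilibrium state $\nu$ is not given a priori with Gibbs bounds. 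I would address this by showing that every ergodic equilibrium state must satisfy at least the \emph{lower} Gibbs inequality: using the variational principle applied to the decomposition of $\nu$ along cylinder partitions together with the upper bound $Z_n \leq C e^{nP(\Phi)}$, any deficit of $\int \log \vp_m$ against cylinder mass contributes a strict pressure loss, forcing the lower Gibbs bound for $\nu$. This gives $\nu \ll \mu$ on cylinders, and ergodicity of both then forces $\nu = \mu$.
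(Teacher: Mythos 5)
The paper itself does not prove this proposition from scratch: it cites Feng--K\"aenm\"aki's Theorem 5.5 and only adds the remark that bounded distortion lets one replace the sequence $\Phi$ by the word-function $\wt\Phi$ and invoke their result for functions on $\L$. Your reconstruction of the construction of $\mu$ (normalized Gibbs weights on cylinders, Ces\`aro averaging, weak-$*$ limit, lower Gibbs via quasi-multiplicativity and $Z_n \asymp e^{nP(\Phi)}$, entropy computation via the cylinder partition) is sound and follows the same line as the source; minor quibble: subadditivity alone gives $Z_{m+n}\le Z_mZ_n$ with no constant, and for the reverse you should sum over all bridge lengths $j\le k$ since $\K$ can vary with $(\I,\J)$, but these are cosmetic.

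The gap is in the uniqueness step. You propose to show that \emph{every} ergodic equilibrium state $\nu$ must satisfy the lower Gibbs bound, on the grounds that ``any deficit of $\int\log\vp_m$ against cylinder mass contributes a strict pressure loss.'' The equilibrium property only controls an averaged quantity: writing
$$
n P(\Phi) \approx H_\nu(\beta_n) + \int\log\vp_n\,d\nu \approx \sum_{\I\in\L(n)}\nu([\I])\log\frac{e^{-nP(\Phi)}\wt\Phi(\I)}{\nu([\I])} + nP(\Phi),
$$
Jensen's inequality gives $\sum_\I\nu([\I])\log\tfrac{e^{-nP(\Phi)}\wt\Phi(\I)}{\nu([\I])}\le \log(e^{-nP(\Phi)}Z_n) = O(1)$, and the equilibrium property says this quantity is $o(n)$ --- but this is a statement about a KL-type divergence along the cylinder partition, not a pointwise bound. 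A cylinder $\I$ with $\nu([\I])$ far below the Gibbs value can be compensated by mass elsewhere, so no ``strict pressure loss'' is forced. Deriving the uniform lower Gibbs bound for an arbitrary equilibrium state needs more than this; it is not known to hold without extra input and is not what Feng--K\"aenm\"aki prove. (Your absolute-continuity chain is also inverted: lower Gibbs for $\nu$ and upper Gibbs for $\mu$ would give $\mu\ll\nu$, not $\nu\ll\mu$, though this is repairable.)

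The standard way to close the argument --- and what both Bowen and Feng--K\"aenm\"aki actually do, and what this paper itself does in Lemma~\ref{lem: subadditive Bowen lemma 8} for the expansive-homeomorphism version --- is the opposite direction: assume a second ergodic equilibrium state $\nu$ exists, use ergodic decomposition to find an invariant set $B$ with $\mu(B)=0$ and $\nu(B)=1$, split the cylinder sum over $B$ and $B^c$, apply a Jensen-type inequality to each piece, and then use the Gibbs property of $\mu$ (which you \emph{do} have by construction) to rewrite the resulting sums as $\log(C_0\eta(\U_n))$ and $\log(C_0\eta(\U_n^c))$, sending $n\to\infty$ to reach a contradiction. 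You should replace your uniqueness step by this Bowen-type argument; the rest of your sketch stands.
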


We note that the original setting of \cite[Theorem 5.5]{feng2011equilibrium} deals with quasi-multiplicative functions on the set of admissible words $\L$, while Proposition \ref{prop: unique eq state} considers more general subadditive potentials. However, such a generalization is rather trivial using the bounded distortion property: we may treat $\Phi$ like a function on $\L$ via \eqref{eq: wt Phi} and apply the result of \cite{feng2011equilibrium}. Another such instance can be found in the proof of Lemma \ref{lem: total ergodicity to mixing}. Moreover, the subadditive Gibbs property of $\mu_\A$ from Proposition \ref{prop: unique eq state} will play a crucial role in establishing our results.
 
\subsection{Singular value potentials and previously known results}
In this subsection, we introduce a specific class of subadditive potentials known as the singular value potentials arising from matrix cocycles. Since all of our results deal with the singular value potentials over subshifts of finite type $(\Sig,\s)$, we assume that our base dynamic is $(\Sig,\s)$ throughout the subsection.

The \textit{singular values} of $A \in M_{d \times d}(\R)$ are eigenvalues of $\sqrt{A^*A}$. We define the \textit{singular value function} $\vps \colon \Md \to \R$ with parameter $s\geq 0 $ as follows:
$$\vps(A) = \begin{cases} 
      \alpha_1(A)\ldots\alpha_{\lfloor s \rfloor}(A)\alpha_{\lceil s \rceil}(A)^{\{s\}} & 0\leq s \leq d ,\\
      |\det(A)|^{s/d} & s>d,
   \end{cases}$$
where $\alpha_1(A) \geq \ldots \geq \alpha_d(A) \geq 0$ are the singular values of $A$. 
The function $(A,s) \mapsto \vps(A)$ is upper semi-continuous, and has a discontinuity at $s=k \in \N$ only if there is a jump in the singular values of the form $\alpha_{k-1}(A)>\alpha_k(A) = 0$. In particular, if $A$ takes values in $\glr$, then $\vps(A)$ is continuous in both $A$ and $s$. 

For each $s \in \R^+$, the \textit{singular value potential} is defined by $\Phi_\A^s := \{\log \vps_{\A,n}\}_{n \in \N}$ where
$$\vps_{\A,n}(x):=\vps(\A^n(x)).$$
As $\vps$ is submultiplicative for all $s$, it follows that $\Phi_\A^s$ is a subadditive potential on $\Sig$. In the case where $s=1$, $\Phi_\A^1$ coincides with the norm potential $\Phi_\A$ introduced in the introduction. We end this section with a few remarks on the singular value potentials.

\begin{rem}\label{rem: bdd distortion}
The bounded distortion \eqref{eq: bdd distortion} holds for all norm potentials $\Phi_\A$ and singular value potentials $\Phi_\A^s$ considered in our results. If $\A$ is locally constant, then we may take $C=1$ from \eqref{eq: bdd distortion}. For fiber-bunched cocycles $\A \in C^\alpha_b(\Sig,\glr)$, the bounded distortion follows from the \hol continuity of the canonical holonomies \eqref{eq: hol holder}.
\end{rem}

\begin{rem}\label{rem: thermo conj inv}
We observe that equilibrium states are preserved under a continuous conjugacy. More specifically, we say $\A \in  C(\Sig,\glr)$ is continuously conjugated to another $\glr$-cocycle $\B$ if there exists $\CC \in C(\Sig,\glr)$ such that $\B(x) = \CC^{-1}(\s x)\A(x)\CC(x)$. This follows from the subadditive variational principle \eqref{eq: var prin} and the fact that the norm $\|\CC(x)\|$ is uniformly bounded from the compactness of $\Sig$.
\end{rem}

\section{Subadditive Thermodynamic Formalism and the $K$-property}\label{sec: K}

Many of the techniques and results in thermodynamic formalism necessary for our proof of Theorem \ref{thm: A} hold in general settings; as such, we set them apart here. 
We first show that under some general conditions, ergodicity and the Gibbs property of an equilibrium state implies uniqueness. We then establish Ledrappier's criterion in the subadditive setting. These combine to prove a general result which shows that in our setting, weak mixing is equivalent to the $K$-property. Throughout this section, we will consider $(X,f)$ to be an expansive homeomorphism on a compact metric space.

\subsection{Uniqueness of Equilibrium States}
In this subsection, we establish sufficient conditions for subadditive equilibrium states to be unique, based on \cite{bowen1974some}.
In doing so, we will need to make use of the Kolmogorov-Sinai entropy of a transformation.
For any measure $\nu$ on $X$ and any finite partition $\xi$ of $X$, define
$$H_\nu(\xi) = -\sum_{A\in\xi}\nu(A)\log\nu(A)$$
and
\begin{equation}\label{eq: entropy wrt partition}
 h_\nu(f,\xi):=\lim\limits_{n\to\infty}\frac{1}{n}H_\nu(\bigvee_{i=0}^{n-1}f^{-i}\xi)=\inf\limits_{n\to\infty}\frac{1}{n}H_\nu(\bigvee_{i=0}^{n-1}f^{-i}\xi)
\end{equation}
where the infimum is due to subadditivity. Then the Kolmogorov-Sinai entropy of $\nu$ is defined by 
$$h_\nu(f) = \sup_{\text{finite partitions }\xi} h_\nu(f,\xi).$$
By Sinai's theorem, if a partition $\xi$ generates the Borel $\sigma$-algebra, then $h_\nu(f) = h_\nu(f,\xi)$.

\begin{lem}\label{lem: subadditive Bowen lemma 8} 
	Let $\Phi = \{\log \vp_n\}_{n \in \N}$ be a subadditive potential on $X$ with bounded distortion as in \eqref{eq: bdd distortion} and suppose $\eta \in \M(f)$ is an ergodic equilibrium state of $\Phi$ with the subadditive lower Gibbs property \eqref{eq: Gibbs 2}. Then $\eta$ is the unique equilibrium state of $\Phi$. 
\end{lem}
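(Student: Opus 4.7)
\medskip
\noindent\textbf{Proof proposal.}
My plan is to adapt Bowen's proof of \cite[Lemma~8]{bowen1974some} to the subadditive setting, obtaining a uniform-in-$n$ bound on the relative entropy of a putative second equilibrium state against $\eta$ by combining Jensen's inequality with the lower Gibbs bound. I will argue by contradiction: suppose $\mu \in \M(f)$ is an equilibrium state of $\Phi$ with $\mu \neq \eta$. Because both $\nu \mapsto h_\nu(f)$ and the Kingman limit $\nu \mapsto \lim_n \tfrac{1}{n}\int \log \vp_n\,d\nu$ are affine in $\nu$, the ergodic components of $\mu$ are themselves equilibrium states, so I may assume $\mu$ is ergodic.

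First I will fix a finite Borel partition $\xi$ of $X$ of diameter smaller than the expansivity constant of $f$, arranged so that every atom of $\xi^n := \bigvee_{i=0}^{n-1} f^{-i}\xi$ contains a Bowen ball $B_n(x_A,\delta)$ of a uniform radius $\delta>0$. In the symbolic setting relevant to our applications this is automatic, since $n$-cylinders are themselves Bowen balls. By expansivity, $\xi$ is a generator, so $h_\nu(f) = h_\nu(f,\xi)$ for every $\nu \in \M(f)$.

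The central computation is a uniform bound on the finite-stage relative entropies $D_n := \sum_{A \in \xi^n} \mu(A)\log(\mu(A)/\eta(A))$. Choosing $x_A \in A$ with $B_n(x_A,\delta) \subset A$, the lower Gibbs property of $\eta$ gives $\eta(A) \geq C_\delta^{-1} e^{-nP(\Phi)}\vp_n(x_A)$, and the bounded-distortion hypothesis gives $\log \vp_n(x_A) \geq \log\vp_n(y) - \log C$ for every $y \in A$. Integrating the resulting bound $-\log\eta(A) \leq nP(\Phi) - \log\vp_n(y) + O(1)$ against $\mu$ atom-by-atom, I obtain
$$
-\sum_{A \in \xi^n} \mu(A)\log\eta(A) \;\leq\; nP(\Phi) - \int \log\vp_n\,d\mu + O(1).
$$
On the other hand, since $n \mapsto H_\mu(\xi^n)$ and $n \mapsto \int \log \vp_n\,d\mu$ are both subadditive sequences, Fekete's lemma gives $\tfrac{1}{n}H_\mu(\xi^n) \geq h_\mu(f)$ and $\tfrac{1}{n}\int \log\vp_n\,d\mu \geq \lambda_\Phi(\mu)$. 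Combined with the equilibrium identity $h_\mu(f)+\lambda_\Phi(\mu) = P(\Phi)$, this forces $H_\mu(\xi^n) + \int \log\vp_n\,d\mu \geq nP(\Phi)$. Substituting, and using $D_n = -H_\mu(\xi^n) - \sum_A \mu(A)\log\eta(A) \geq 0$ (Gibbs' inequality),
$$
0 \;\leq\; D_n \;\leq\; nP(\Phi) - H_\mu(\xi^n) - \int \log\vp_n\,d\mu + O(1) \;\leq\; O(1),
$$
uniformly in $n$. Hence $D(\mu\|\eta) = \lim_n D_n < \infty$, which forces $\mu \ll \eta$. The density $d\mu/d\eta$ is then $f$-invariant and so constant $\eta$-a.e.\ by ergodicity of $\eta$, giving $\mu = \eta$ and contradicting $\mu \neq \eta$.

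The main technical obstacle I foresee is arranging the partition $\xi$ so that each atom of $\xi^n$ contains a Bowen ball of a uniform radius $\delta$ independent of $n$ and of the atom; without this, the lower Gibbs bound on Bowen balls cannot be transferred to $\eta(A)$. In the symbolic setting, where cylinders simultaneously play both roles, this step is free and the whole argument is clean. For a general expansive homeomorphism one needs a slightly more delicate (Markov-type) partition construction to ensure this containment on atoms of $\xi^n$; this is routine but lies slightly outside the main entropic argument.
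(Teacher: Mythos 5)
Your argument is a genuinely different proof from the one in the paper, and the core computation is correct. Where the paper follows Bowen's original strategy — constructing $n$-dependent adapted partitions $\xi_n$ from maximal $(n,2\ep)$-separated sets, approximating a mutually-singular invariant set $B$ by unions $\U_n$ of atoms of the shifted partitions $\Omega_n = f^{[n/2]}\xi_n$, and then applying Bowen's Jensen-type inequality to the two pieces $\U_n$ and $\U_n^c$ to force $\nu(\U_n)\log\eta(\U_n)\to -\infty$ — you instead fix a single generating partition $\xi$ and show that the finite-stage relative entropies $D_n = D_{\xi^n}(\mu\,\|\,\eta)$ are uniformly bounded. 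The two key inputs are the same as the paper's (lower Gibbs for $\eta$, bounded distortion for $\Phi$, and subadditivity of $n\mapsto \int\log\vp_n\,d\mu$), but your use of the equilibrium identity $H_\mu(\xi^n)+\int\log\vp_n\,d\mu \geq nP(\Phi)$ via Fekete replaces the splitting-plus-Jensen step, and this genuinely streamlines the argument and avoids any explicit appeal to the mutual-singularity set $B$ or to Bowen's \cite[Lemma 7]{bowen1974some}.

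Two points deserve more care, however. First, $D_n$ is the relative entropy of $\mu$ against $\eta$ restricted to the sub-$\sigma$-algebra $\mathcal{F}_\infty := \bigvee_{i\geq 0} f^{-i}\xi$, which for an invertible $f$ is only the \emph{future} $\sigma$-algebra, not all of $\mathcal{B}$; so $\sup_n D_n < \infty$ gives $\mu|_{\mathcal{F}_\infty}\ll\eta|_{\mathcal{F}_\infty}$ and not directly $\mu\ll\eta$, nor can you immediately quote $f$-invariance of the density, since $\mathcal{F}_\infty$ is not $f$-invariant. The gap is fillable: since $\mu\neq\eta$ are ergodic, their Birkhoff-generic sets (with respect to $\bigcup_n\xi^n$, which are $\mathcal{F}_\infty$-measurable) already witness $\mu\perp\eta$ on $\mathcal{F}_\infty$ unless $\mu=\eta$ on $\bigcup_n\xi^n$, and in the latter case $f$-invariance of the measures together with the fact that $\bigcup_{k}f^k\mathcal{F}_\infty$ generates $\mathcal{B}$ forces $\mu=\eta$; but this is exactly the issue the paper's shifted partitions $\Omega_n = f^{[n/2]}\xi_n$ are designed to sidestep, and you should spell it out. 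Second, you are right that the requirement of a \emph{fixed} partition $\xi$ whose $n$-th refinement atoms each contain a uniform $(n,\delta)$-Bowen ball is free in the symbolic setting but genuinely nontrivial for a general expansive homeomorphism; the paper circumvents this entirely by using $n$-dependent partitions $\xi_n$ built so that $B_n(x,\ep)\subseteq A_x\subseteq \overline{B_n(x,2\ep)}$ and the identity $H_\nu(\xi_n)\geq h_\nu(f^n)= nh_\nu(f)$, which requires no Markov-type structure. Since the lemma as stated is for an arbitrary expansive homeomorphism of a compact metric space, this is not merely an aesthetic difference but a point where your proof, as written, does not yet cover the stated generality.
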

\begin{proof}
	We follow the proof of \cite[Lemma 8]{bowen1974some} closely. Assume for the sake of contradiction that $\nu\in \M(f)$ is an ergodic equilibrium state not equal to $\eta$. Then $\nu$ and $\eta$ are mutually singular, and so there exists a $(\nu+\eta)$-measurable set $B \subset X$ such that $f(B) = B$, $\eta(B) = 0$ and $\nu(B)=1$. For instance, we could take $B$ to be the set of generic points for $\nu$.
	
	Let $4\ep>0$ be smaller than the expansivity constant of $(X,f)$ and small enough for bounded distortion \eqref{eq: bdd distortion} to hold. For each $n\in \N$ we fix a maximal $(n,2\ep)$-separated set $E_n \subset X$. Then we fix an adapted partition $\xi_n:=\{A_x \colon x \in E_n\}$ of $X$ such that $B_n(x,\ep)\subseteq A_x \subseteq \overline{B_n(x,2\ep)}$ for each $x \in E_n$. 
	
In order to make use of the expansivity assumption, define for all $n$, the partition $\Omega_n := f^{[n/2]}\xi_n$ and denote the element of $\Omega_n$ containing $y \in X$ by $\omega_n(y)$. From the construction of $\Omega_n$, for any $y \in X$ there exists some $x \in E_n$ such that $B_{n}(x,\ep) \subseteq f^{-[n/2]}\omega_n(y) \subseteq B_n(x,2\ep)$. It then follows that $f^{-[n/2]}\omega_n(y) \subseteq B_n(y,4\ep)$. Therefore expansivity gives $\bigcap\limits_{n\in \N} \omega_n(y) = \{y\}$ for all $y\in X$, and by \cite[Lemma 5.10]{CT} there exists a sequence $\{C_n\}_{n\in \N}$ where $C_n$ is a union of elements of $\Omega_n$ such that $\lim\limits_{n\to \infty} (\nu + \eta)(C_n\vartriangle B)\to 0$. Since $B$ is $f$-invariant, setting 
$$\U_n:=f^{-[n/2]}C_n \subseteq \xi_n,$$
 we have $(\nu+\eta)(\U_n \vartriangle B) \to 0$. From the assumptions on $B$, this is equivalent to $\eta(\U_n)\to 0$ and $\nu(\U_n)\to 1$.
	
As $(X,f)$ is expansive, $\xi_n$ is a generator under $f^n$ by observing that given $y,z\in \bigcap\limits_{k\in\Z} f^{kn}B_n(x_k,2\ep)$ for some $\{x_k\}_{k\in \Z} \subset X$, we have that $d(f^ky,f^kz) \leq 4\ep$ for all $k\in\Z$. Consequently,
$$nh_{\nu}(f) = h_{\nu}(f^n) = h_{\nu}(f^n,\xi_n) \leq H_{\nu}(\xi_n)$$
where the last inequality is from \eqref{eq: entropy wrt partition}.
Moreover, from the subadditivity of $\Phi$, we have
$$\lim\limits_{k \to \infty}\frac{1}{k}\int \log \vp_k \,d\nu = \inf\limits_{k \to \infty} \frac{1}{k}\int \log\vp_n \,d\nu \leq \frac{1}{n}\int \log\vp_n\,d\nu$$
for each $n \in \N$. Hence, 
	\begin{align*}
	nP(\Phi) &=n\Big(h_\nu(f)+\lim\limits_{k \to \infty} \frac{1}{k}\int\log \vp_k \,d\nu\Big)\\
	&\leq H_{\nu}(\xi_n)+\int \log \vp_n \,d\nu\\
	&= \sum\limits_{A_x \in \xi_n} \Big(-\nu(A_x) \log\nu(A_x)+\int \log \vp_n \cdot \chi_{A_x}\,d\nu\Big).
	\end{align*}
	
Let $C$ be the constant given by the bounded distortion \eqref{eq: bdd distortion} on $\Phi$. Then 
$$\int \log \vp_n \cdot \chi_{A_x}\,d\nu \leq \nu(A_x)\big(C+\log\vp_n(x)\big)$$ for all $n$ sufficiently large. In particular, we have
	$$nP(\Phi) \leq C+ \sum\limits_{A_x \in \mathcal{U}_n} \nu(A_x)\Big(- \log\nu(A_x)+ \log\vp_n(x)\Big)+\sum\limits_{A_x \cap \U_n = \emptyset} \nu(A_x)\Big(- \log\nu(A_x)+ \log\vp_n(x)\Big).$$ 
	Applying a Jensen-type inequality (see \cite[Lemma 7]{bowen1974some}) to each sum, we have 
	$$nP(\Phi)-C\leq 2C^*+\nu(\U_n)\log \left(\sum_{A_x \in \mathcal{U}_n} \vp_n(x)\right)+\nu(\U_n^c)\log \left(\sum_{A_x \cap \U_n =\emptyset} \vp_n(x)\right),$$
	where $C^*:=\max\limits_{t \in [0,1]}-t\log t$. 
	
	Let $C_0$ be the constant from the subadditive lower Gibbs property \eqref{eq: Gibbs 2} of $\eta$. Then after rearranging the terms, we have
	\begin{align*}
	-2C^*-C &\leq \nu(\U_n) \left(\log \sum_{A_x \in \U_n} \vp_n(x)e^{-nP(\Phi)}\right)+\nu(\U_n^c) \log \left(\sum_{A_x \cap \U_n = \emptyset} \vp_n(x)e^{-nP(\Phi)}\right)\\
	&\leq \nu(\U_n)\log (C_0\eta(\U_n))+\nu(\U_n^c)\log (C_0\eta(\U_n^c))\\
	&=\log C_0 +\nu(\U_n)\log \eta(\U_n)+\nu(\U_n^c)\log \eta(\U_n^c).
	\end{align*}
	This, however, is a contradiction because as we send $n\to \infty$, the lower bound $-2C^*-C$ is independent of $n\in \N$ while
	$\nu(\U_n)\log \eta(\U_n) \to -\infty$ and $\nu(\U_n^c)\log \eta(\U_n^c) \to 0$. Hence, $\nu$ cannot be an equilibrium state of $\Phi$.
\end{proof}

\subsection{Subadditive generalization of Ledrappier's criterion}
\begin{lem}
For any subadditive potential $\Phi=\{\log \vp_n\}_{n \in \N}$ on $(X,f)$, consider a sequence of continuous functions $\Psi=\{\log \psi_n\}_{n\in \N}$ on $(X\times X, f\times f)$ defined by 
\begin{equation}\label{eq: Psi}
\psi_n(x,y) := \vp_n(x) \cdot \vp_n(y).
\end{equation} 
Then $\Psi$ is subadditive and $P(\Psi) = 2P(\Phi)$.
\end{lem}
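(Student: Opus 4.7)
\emph{Proof plan.} Subadditivity of $\Psi$ is immediate from the definition: by subadditivity of $\Phi$ applied in each coordinate,
\[
\log\psi_{m+n}(x,y) = \log\vp_{m+n}(x) + \log\vp_{m+n}(y) \leq \log\psi_m(x,y) + \log\psi_n\big((f\times f)^m(x,y)\big).
\]
The content of the lemma is the identity $P(\Psi) = 2P(\Phi)$. My plan is to bypass any direct manipulation of separated sets in $X\times X$ and instead invoke the subadditive variational principle of Cao--Feng--Huang on both systems. This is legitimate because $(X,f)$ is expansive, so $(X\times X,f\times f)$ is expansive as well; both have finite topological entropy and upper semicontinuous entropy maps.

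For the lower bound $P(\Psi) \geq 2P(\Phi)$, I would test $P(\Psi)$ against product measures. Given $\mu\in\M(f)$, the measure $\mu\times\mu$ is $(f\times f)$-invariant with $h_{\mu\times\mu}(f\times f) = 2h_\mu(f)$, and Fubini gives $\int \log\psi_n\,d(\mu\times\mu) = 2\int\log\vp_n\,d\mu$. Hence $P_{\mu\times\mu}(\Psi) = 2P_\mu(\Phi)$, and taking the supremum over $\mu$ produces the desired inequality.

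For the upper bound $P(\Psi) \leq 2P(\Phi)$, given any $\pi \in \M(f\times f)$, I would set $\mu = (\pi_1)_*\pi$ and $\nu = (\pi_2)_*\pi$; both lie in $\M(f)$, and the integrals split as $\int\log\psi_n\,d\pi = \int \log\vp_n\,d\mu + \int\log\vp_n\,d\nu$. The step I would treat with care is the entropy inequality $h_\pi(f\times f) \leq h_\mu(f) + h_\nu(f)$. I would prove it by exploiting expansivity once more: pick a finite partition $\xi$ of $X$ whose diameter is below the expansivity constant of $f$, so that $\xi\times\xi$ is a generator for $(X\times X,f\times f)$ under every $\pi$. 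Then, using $\xi\times\xi = (\xi\times X) \vee (X\times\xi)$ and subadditivity of entropy over joins,
\[
h_\pi(f\times f) = h_\pi(f\times f,\xi\times\xi) \leq h_\pi(f\times f,\xi\times X) + h_\pi(f\times f,X\times\xi) = h_\mu(f,\xi) + h_\nu(f,\xi),
\]
and each summand on the right equals the full Kolmogorov--Sinai entropy of its marginal since $\xi$ is a generator. Combining these pieces yields $P_\pi(\Psi) \leq P_\mu(\Phi) + P_\nu(\Phi) \leq 2P(\Phi)$, and taking the supremum over $\pi$ closes the argument.

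The step I expect to be the main obstacle is precisely the entropy subadditivity for arbitrary couplings; a more elementary attempt at $P(\Psi) \leq 2P(\Phi)$ that counts $(n,\ep)$-separated sets in $X\times X$ appears to stall, because without a bounded distortion hypothesis on $\Phi$ one has no uniform way to compare $\vp_n$ at $(n,\ep)$-close points, so the variational-principle route is both cleaner and more general.
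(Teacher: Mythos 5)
Your proof takes essentially the same route as the paper: subadditivity by the coordinate-wise computation, the lower bound by testing $P(\Psi)$ against product measures $\mu\times\mu$, and the upper bound by projecting an arbitrary $\pi\in\M(f\times f)$ to its marginals and invoking $h_\pi(f\times f)\le h_{\nu_1}(f)+h_{\nu_2}(f)$. The only material difference is that the paper cites this entropy inequality (to Downarowicz, Fact 4.4.3) while you prove it directly via the expansivity/generator argument; both are correct, and your closing remark correctly identifies why a direct separated-set count would be awkward without bounded distortion.
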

\begin{proof}
Subadditivity of $\Psi$ follows immediately: as for all $n,m\in\N$,
\begin{align*}
\log\psi_{n+m}(x,y) &= \log \vp_{n+m}(x) + \log \vp_{n+m}(y)
\\
&\leq \log \vp_n(x) + \log\vp_n\circ f^m(x) + \log\vp_n(y) + \log\vp_n\circ f^m(y)
\\
&= \log \psi_n(x,y) + \log \psi_n(f^mx,f^my).
\end{align*}

For the second statement, let $\mu$ be an equilibrium state for $\Phi$. Then $\mu\times\mu \in \M(f\times f)$, and we also have
	$$P_{\mu\times\mu}(\Psi) = h_{\mu\times\mu}(f\times f) + \lim\limits_{n\to \infty}\frac{1}{n}\int \log\psi_n\,d\mu\times\mu = 2h_\mu(f) + 2\lim\limits_{n\to \infty}\frac{1}{n}\int\log\vp_n\,d\mu = 2P_\mu(\Phi).$$
	Therefore, by the variational principle \eqref{eq: var prin}, we see that $P(\Psi) \geq 2P(\Phi)$. 
	
	For the reverse direction, we again proceed by the variational principle. Let $\nu\in\M(f\times f)$ be arbitrary, and write $\nu_1$ and $\nu_2$ to be the projections of $\nu$ onto the first and second coordinate, respectively. Each $\nu_i$ is a $f$-invariant measure on $X$. An elementary calculation shows that $h_\nu(f\times f) \leq h_{\nu_1}(f) + h_{\nu_2}(f)$ (see for instance, \cite[Fact 4.4.3]{downarowicz}), and
	$$\lim\limits_{n\to \infty} \frac{1}{n}\int\log\psi_n\,d\nu = \lim \limits_{n\to \infty}\frac{1}{n}\Big(\int\log\vp_n\,d\nu_1 + \int\log\vp_n\,d\nu_2\Big).$$
	Therefore, 
	$$P_\nu(\Psi) \leq h_{\nu_1}(f) + \lim\limits_{n\to \infty}\frac{1}{n}\int\log\vp_n\,d\nu_1 + h_{\nu_2}(f) + \lim\limits_{n\to \infty}\frac{1}{n}\int\log\vp_n\,d\nu_2 \leq 2P(\Phi).$$
\end{proof}
We immediately have the following corollary:
\begin{cor}
If $\mu\in\M(f)$ is an equilibrium state for $\Phi$, then $\mu\times\mu \in \M(f\times f)$ is an equilibrium state for $\Psi$.
\end{cor}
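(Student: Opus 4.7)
The plan is to verify directly that $P_{\mu\times\mu}(\Psi) = P(\Psi)$, which is essentially a bookkeeping exercise using the computation already carried out in the preceding lemma. The ingredients are all in hand: the previous lemma gives $P(\Psi) = 2P(\Phi)$, and the product structure of both $\mu\times\mu$ and $\psi_n$ lets us split each term in the definition of $P_\nu(\Psi)$ into two copies of the corresponding term for $\Phi$.

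More concretely, I would first observe that by \eqref{eq: Psi} and Fubini's theorem,
$$\lim_{n\to\infty}\frac{1}{n}\int \log\psi_n\,d(\mu\times\mu) = 2\lim_{n\to\infty}\frac{1}{n}\int\log\vp_n\,d\mu,$$
and then invoke the standard identity $h_{\mu\times\mu}(f\times f) = 2h_\mu(f)$ for product measures (as cited in the proof of the lemma). Adding these gives $P_{\mu\times\mu}(\Psi) = 2P_\mu(\Phi)$. Since $\mu$ is an equilibrium state for $\Phi$, the right-hand side equals $2P(\Phi)$, which is $P(\Psi)$ by the lemma. Therefore $\mu\times\mu$ attains the supremum in the subadditive variational principle for $\Psi$, proving the corollary.

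There is no real obstacle here; every step is either a direct consequence of the product structure or has already been derived in the proof of the preceding lemma. The only point requiring care is noting that $\mu\times\mu$ is indeed $(f\times f)$-invariant, which is immediate from the $f$-invariance of $\mu$.
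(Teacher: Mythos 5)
Your proof is correct and follows the same route the paper implicitly takes: the corollary is immediate because the first half of the preceding lemma's proof already computes $P_{\mu\times\mu}(\Psi) = 2P_\mu(\Phi)$, and when $\mu$ is an equilibrium state this equals $2P(\Phi) = P(\Psi)$. Your bookkeeping (product structure of $\psi_n$, $h_{\mu\times\mu}(f\times f) = 2h_\mu(f)$, and $(f\times f)$-invariance of $\mu\times\mu$) is exactly what is needed.
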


We can now state the subadditive generalization of Proposition \ref{prop: Led K} for establishing the $K$-property. Recall that we call a measure $\mu$ is $K$ if and only if it has no nontrivial zero entropy factors. Equivalently, the maximal zero entropy factor, called the \textit{Pinsker factor}, is trivial.

\begin{prop}\label{prop: Ledrappier subadditive}
	Let $\Phi = \{\log\vp_n\}_{n \in \N}$ be a subadditive potential on $X$ with unique equilibrum state $\mu \in \M(f)$. If $\mu \times \mu \in \M(f \times f)$ is the unique equilibrium state for $\Psi$, then $\mu$ has the $K$-property.
\end{prop}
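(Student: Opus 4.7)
The plan is to follow Ledrappier's original strategy \cite{Ledrappier1977K}, using the characterization of the $K$-property as triviality of the Pinsker $\sigma$-algebra, together with the relatively independent self-joining over the Pinsker factor. Let $\mathcal{P}\subset\mathcal{B}$ denote the Pinsker $\sigma$-algebra of $(X,f,\mu)$, i.e.\ the largest $f$-invariant sub-$\sigma$-algebra on which $f$ acts with zero entropy. By the proposition stated just before Proposition~\ref{prop: Ledrappier subadditive}, $\mu$ has the $K$-property iff $\mathcal{P}$ is trivial mod $\mu$.

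Form the relatively independent self-joining $\lambda:=\mu\times_{\mathcal{P}}\mu$ on $X\times X$, i.e.\ the unique $(f\times f)$-invariant probability measure whose projections to each coordinate are $\mu$ and under which the two coordinates are conditionally independent given $\mathcal{P}$. The first step is to compute the entropy of $\lambda$. Since $\mathcal{P}$ carries zero entropy, the Abramov--Rokhlin formula applied to the factor $(X/\mathcal{P},f,\mu|_{\mathcal{P}})$ gives $h_\mu(f\mid\mathcal{P})=h_\mu(f)$; together with conditional independence of the two fibers over the diagonal $\mathcal{P}$-factor, this yields
\begin{equation*}
h_\lambda(f\times f)=h_{\mu|_{\mathcal{P}}}(f)+2\,h_\mu(f\mid\mathcal{P})=2h_\mu(f).
\end{equation*}
The second step is the potential integral: since each marginal of $\lambda$ equals $\mu$, for every $n$
\begin{equation*}
\int \log\psi_n\,d\lambda=\int\log\vp_n(x)\,d\mu(x)+\int\log\vp_n(y)\,d\mu(y)=2\int\log\vp_n\,d\mu.
\end{equation*}
Combining these with $P(\Psi)=2P(\Phi)$ (established in the preceding lemma) gives
\begin{equation*}
P_\lambda(\Psi)=2h_\mu(f)+2\lim_{n\to\infty}\tfrac{1}{n}\!\int\log\vp_n\,d\mu=2P_\mu(\Phi)=2P(\Phi)=P(\Psi),
\end{equation*}
so $\lambda$ is an equilibrium state for $\Psi$.

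The third step invokes the hypothesis: since $\mu\times\mu$ is the \emph{unique} equilibrium state of $\Psi$, we must have $\lambda=\mu\times\mu$. The final step is to deduce triviality of $\mathcal{P}$ from the identity $\mu\times_{\mathcal{P}}\mu=\mu\times\mu$. For any $A\in\mathcal{P}$, the fact that $A$ is $\mathcal{P}$-measurable forces $\lambda(A\times A)=\int \chi_A(x)\chi_A(y)\,d\lambda=\int \chi_A\,d\mu=\mu(A)$, whereas $(\mu\times\mu)(A\times A)=\mu(A)^2$. The equality $\mu(A)=\mu(A)^2$ forces $\mu(A)\in\{0,1\}$, so $\mathcal{P}$ is trivial mod $\mu$, as required.

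I do not anticipate a serious obstacle: the Pinsker $\sigma$-algebra, relatively independent self-joinings, and the Abramov--Rokhlin entropy addition formula are standard tools, and the only point where the subadditive nature of $\Phi$ intervenes is in the computation $\int\log\psi_n\,d\lambda=2\int\log\vp_n\,d\mu$, which is immediate from the definition $\psi_n(x,y)=\vp_n(x)\vp_n(y)$ and the marginal property of $\lambda$. The mildly delicate point is the entropy identity $h_\lambda(f\times f)=2h_\mu(f)$, but this is exactly the classical computation for relatively independent joinings over the Pinsker factor, and does not interact with the potential at all.
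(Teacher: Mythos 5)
Your proposal is correct and follows essentially the same argument as the paper: both form the relatively independent self-joining over the Pinsker factor, show its entropy is $2h_\mu(f)$ and its $\Psi$-integral is $2\int\log\vp_n\,d\mu$, and then invoke uniqueness of the equilibrium state for $\Psi$. The only cosmetic difference is logical packaging — you argue directly, letting uniqueness force $\lambda=\mu\times\mu$ and then deducing triviality of $\mathcal{P}$, whereas the paper proves the contrapositive by assuming $\mathcal{P}$ nontrivial and producing a second equilibrium state.
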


\begin{proof}
We follow the original proof of Ledrappier, and prove the contrapositive. Let $\mu\in \M(f)$ be the unique equilibrium state for $\Phi$, and suppose it is not $K$. Then the Pinsker factor $\Pi$ for $\mu$ is non-trivial. We therefore can define $m\in\M(f\times f)$ different from $\mu\times \mu$ to be 
$$m(A\times A') = \int_A \mathbb{E}[\chi_{A'}\mid \Pi]\,d\mu$$
for all measurable $A,A'\subset X$. To see this is different from $\mu\times\mu$, take $A$ to be $\Pi$-measurable, and observe that $m(A\times A) = \mu(A)\neq \mu(A)^2 =  (\mu\times\mu)(A \times A)$. For those familiar with joinings, this is the relatively independent self-joining of $\mu$ over $\Pi$.

The entropy calculation from \cite{Ledrappier1977K} is purely dependent on the measure, and so is unaffected by the subadditive setting. For a reference where this calculation is carried out in full, see \cite{call2020}. Hence, $h_m(f\times f) = 2h_\mu(f)$. Now because $m(A\times X) = m(X\times A) = \mu(A)$, and $\psi$ is is defined independently in each coordinate, we observe that for all $n \in \N$, $\displaystyle\int \log\psi_n\,dm = 2\int\log\vp_n\,d\mu$. Therefore, 
	$$P_m(\Psi) = h_m(f\times f) + \lim\limits_{n\to \infty} \frac{1}{n}\int\log\psi_n\,dm = 2h_\mu(f) + 2\lim\limits_{n\to \infty}\frac{1}{n} \int\log\vp_n\,d\mu = 2P_\mu(\Phi) = 2P(\Phi).$$
Hence, $m$ is an equilibrium state for $\Psi$ in $\M( f\times f)$, as is $\mu\times\mu$. So there exist multiple equilibrium states for the product system.
\end{proof}

Now, recall from Proposition \ref{prop: weak mixing iff} that a measure $\mu \in \M(f)$ is weak mixing if and only if $\mu\times\mu \in \M(f \times f)$ is ergodic. Using this fact, we obtain the following theorem:

\begin{thm}\label{thm: Subadditive Kprop result}
Let $(X,f)$ be an expansive homeomorphism on a compact metric space and $\Phi = \{\log \vp_n\}_{n \in \N}$ be a subadditive potential on $X$ with bounded distortion \eqref{eq: bdd distortion}. Suppose $\eta \in \M(f)$ is a weak mixing equilibrium state of $\Phi$ with the lower subadditive Gibbs property \eqref{eq: Gibbs 2}. Then $\eta$ has the $K$-property.
\end{thm}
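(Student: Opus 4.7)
The plan is to combine the two lemmas just proved with the weak-mixing hypothesis by transferring everything to the product system $(X\times X, f\times f)$. First, since $\eta$ is weak mixing it is in particular ergodic, and by hypothesis it is an equilibrium state for $\Phi$ with the lower subadditive Gibbs property. So Lemma \ref{lem: subadditive Bowen lemma 8} immediately gives that $\eta$ is the \emph{unique} equilibrium state of $\Phi$. By Proposition \ref{prop: Ledrappier subadditive}, it then suffices to prove that $\eta\times\eta$ is the unique equilibrium state for the subadditive potential $\Psi=\{\log\psi_n\}$ on $(X\times X,f\times f)$ defined in \eqref{eq: Psi}.

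To obtain this uniqueness, I would again invoke Lemma \ref{lem: subadditive Bowen lemma 8}, this time applied to $(X\times X,f\times f,\Psi)$ and the candidate measure $\eta\times\eta$. The needed ingredients are routine to check: $(X\times X,f\times f)$ is an expansive homeomorphism on a compact metric space (with the product metric \eqref{eq: product metric}), and the corollary following \eqref{eq: Psi} already says $\eta\times\eta$ is an equilibrium state for $\Psi$. The weak-mixing assumption on $\eta$, via Proposition \ref{prop: weak mixing iff}, guarantees that $\eta\times\eta$ is ergodic with respect to $f\times f$. The remaining two conditions, bounded distortion of $\Psi$ and the lower subadditive Gibbs property of $\eta\times\eta$, follow essentially by factoring across the two coordinates.

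More concretely, the product metric gives $B_n^{f\times f}((x_1,x_2),\varepsilon) = B_n^f(x_1,\varepsilon)\times B_n^f(x_2,\varepsilon)$, and $\psi_n(x_1,x_2)=\vp_n(x_1)\vp_n(x_2)$. For bounded distortion, if $(y_1,y_2),(z_1,z_2)\in B_n^{f\times f}((x_1,x_2),\varepsilon)$ then $\vp_n(y_i)/\vp_n(z_i)$ is controlled by the constant $C$ from \eqref{eq: bdd distortion} for each coordinate, and multiplying the two estimates yields bounded distortion for $\Psi$ with constant $C^2$. For the lower Gibbs bound, using $P(\Psi)=2P(\Phi)$ (which was established just before Proposition \ref{prop: Ledrappier subadditive}),
\begin{equation*}
\frac{(\eta\times\eta)\bigl(B_n^{f\times f}((x_1,x_2),\varepsilon)\bigr)}{e^{-nP(\Psi)}\psi_n(x_1,x_2)}
=\prod_{i=1}^{2}\frac{\eta(B_n^f(x_i,\varepsilon))}{e^{-nP(\Phi)}\vp_n(x_i)}\geq C_0^{-2},
\end{equation*}
where $C_0$ is the constant from \eqref{eq: Gibbs 2} for $\eta$.

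With all hypotheses of Lemma \ref{lem: subadditive Bowen lemma 8} verified on the product system, $\eta\times\eta$ is the unique equilibrium state of $\Psi$. Proposition \ref{prop: Ledrappier subadditive} then promotes this to the $K$-property for $\eta$, completing the proof. The only genuinely nontrivial step is already done (Lemma \ref{lem: subadditive Bowen lemma 8}); the main task here is the bookkeeping that the hypotheses of that lemma lift cleanly from $X$ to $X\times X$, which, thanks to the product form of Bowen balls and of $\psi_n$, is entirely routine.
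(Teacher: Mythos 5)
Your proposal is correct and matches the paper's own proof in all essential respects: both establish uniqueness of the equilibrium state $\eta\times\eta$ for $\Psi$ by verifying the hypotheses of Lemma \ref{lem: subadditive Bowen lemma 8} on the product system, using the factorization $B_n((x,y),\varepsilon)=B_n(x,\varepsilon)\times B_n(y,\varepsilon)$ under the sup metric, and then conclude via Proposition \ref{prop: Ledrappier subadditive}. Your write-up is a bit more explicit than the paper's (you spell out the Gibbs constant $C_0^{-2}$ and note separately that $\eta$ itself is unique for $\Phi$, which is indeed a hypothesis of Proposition \ref{prop: Ledrappier subadditive} that the paper leaves implicit), but the route is the same.
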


\begin{proof}
First, as $\eta$ is a weak mixing equilibrium state, $\eta\times\eta$ is an ergodic equilibrium state. Therefore, if we can show Lemma \ref{lem: subadditive Bowen lemma 8} holds for the system $(X\times X,f\times f)$ with potential $\Psi$ defined as \eqref{eq: Psi}, then it follows that $\eta\times\eta$ is the unique equilibrium state. Therefore, by the subadditive version of Ledrappier's criterion, it immediately follows that $\eta$ is $K$. 

We now verify the assumptions in Lemma \ref{lem: subadditive Bowen lemma 8}. First, $(X\times X,f\times f)$ is still an expansive homeomorphism on a compact metric space. Thus, we only need to check that $\Psi$ has the bounded distortion and the subadditive Gibbs property. 

Since the metric on our product space is the maximum of the distance in each coordinate \eqref{eq: product metric}, it follows that 
$$B_n((x,y),\ep) = B_n(x,\ep)\times B_n(y,\ep).$$
From this, it follows that the subadditive Gibbs property on $\eta$ and the bounded distortion of $\Phi$ induce the corresponding properties on $\eta \times \eta$ and $\Psi$.
\end{proof}

We note that weak mixing is a natural assumption to impose in this theorem, as one can easily define a system which is not weak mixing and satisfies all other conditions of this theorem.

\subsection{Relationship between one and two-sided results}\label{sec: two sided}

Many of the results that we cite (see for instance, \cite{morris2018ergodic, morris2019necessary, feng2009lyapunov}) are written in the case where the base dynamic is a one-sided shift. While it is not difficult to see that those that we cite hold in the invertible setting as well, for completeness, we sketch some of the arguments here. Throughout, let $(\Sig^+,\s)$ be a mixing one-sided shift of finite type, and let $(\Sig,\s)$ be its natural extension, which is a two-sided subshift of finite type with the same list of forbidden words. Finally, let $\pi : \Sig \to\Sig^+$ be the standard projection map, taking $(x_i)_{i\in\Z}$ to $(x_i)_{i\in\N_0}$.

The following proposition is a consequence of the definition for the subadditive pressure.
\begin{prop}
Let $\Phi$ be a subadditive potential on $(\Sig^+,\sigma)$, and consider the subadditive potential $\Psi := \Phi\circ\pi$ on $(\Sig,\s)$. Then $P(\Phi) =	P(\Psi)$.
\end{prop}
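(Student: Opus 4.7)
The plan is to prove the equality of pressures via the subadditive variational principle \eqref{eq: var prin}, using the classical correspondence between $\s$-invariant measures on $\Sig^+$ and on its natural extension $\Sig$. First I would observe that the intertwining $\pi \circ \s = \s \circ \pi$ makes $\pi_\ast \colon \M(\s|_\Sig) \to \M(\s|_{\Sig^+})$ well defined, and that every $\s$-invariant measure $\mu^+$ on $\Sig^+$ lifts uniquely to a $\s$-invariant measure $\mu$ on $\Sig$ with $\pi_\ast\mu = \mu^+$ (the measure prescribed on finite cylinders by $\mu([w_{-k}\cdots w_\ell]) = \mu^+(\s^{-k}[w_{-k}\cdots w_\ell]^+)$, with uniqueness forced by invertibility of $\s$ on $\Sig$ together with $\s$-invariance). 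This gives a bijection $\mu \leftrightarrow \pi_\ast\mu$ between $\M(\s|_\Sig)$ and $\M(\s|_{\Sig^+})$.

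Next, I would verify that the two ingredients entering the pressure functional transfer across this bijection. The Kolmogorov–Sinai entropies match, $h_\mu(\s) = h_{\pi_\ast\mu}(\s)$, because the natural extension preserves entropy. Since $\log\psi_n = \log\vp_n \circ \pi$ by the very definition of $\Psi$, the change of variables formula gives
\[
\int_\Sig \log\psi_n \, d\mu \;=\; \int_{\Sig^+} \log\vp_n \, d(\pi_\ast\mu)
\]
for every $n \in \N$, and dividing by $n$ and letting $n \to \infty$ yields the equality of subadditive averages. Summing the two contributions gives $P_\mu(\Psi) = P_{\pi_\ast\mu}(\Phi)$ for every $\mu \in \M(\s|_\Sig)$.

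Finally, since both $\Sig^+$ and $\Sig$ are subshifts of finite type and therefore have finite topological entropy, the subadditive variational principle \eqref{eq: var prin} applies on each side. Taking the supremum over the two (bijectively matched) spaces of invariant measures then yields
\[
P(\Psi) \;=\; \sup_{\mu \in \M(\s|_\Sig)} P_\mu(\Psi) \;=\; \sup_{\mu^+ \in \M(\s|_{\Sig^+})} P_{\mu^+}(\Phi) \;=\; P(\Phi),
\]
which is the claim. The argument is essentially routine and poses no serious obstacle; the only point requiring attention is checking that the lift-project correspondence really is a bijection preserving both the entropy and the subadditive integrals, and these are standard facts about natural extensions. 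An alternative, more hands-on route would compare $(n,\ep)$-separated sets directly: a lift of a separated set in $\Sig^+$ to $\Sig$ is still $(n,\ep)$-separated (giving $P(\Phi) \leq P(\Psi)$), while two points of $\Sig$ lying above the same point of $\Sig^+$ differ only in the past and can be $d_n$-separated only if they already disagree within the first $M = M(\ep)$ past coordinates, bounding the multiplicity by $q^M$ independently of $n$, which is absorbed by $\frac{1}{n}\log$ as $n \to \infty$ (giving the opposite inequality). Either approach succeeds, but the variational-principle route is cleaner and aligns more naturally with the framework of the paper.
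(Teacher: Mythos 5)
Your argument is correct. The paper itself gives no proof---it merely asserts that the proposition ``is a consequence of the definition for the subadditive pressure,'' which points at the direct comparison of $(n,\ep)$-separated sets, i.e.\ essentially the second, more hands-on route you sketch at the end. Your primary route is genuinely different: you go through the subadditive variational principle \eqref{eq: var prin} and the classical bijection between $\M(\s|_{\Sig^+})$ and $\M(\s|_{\Sig})$ coming from the natural extension, matching entropies and subadditive integrals termwise since $\log\psi_n = \log\vp_n\circ\pi$. Both routes succeed, and they have different character. The definition-direct route is self-contained (it needs only the separated-set definition, plus the observation that the two-sided metric restricted to lifts sharing a past coincides with the one-sided metric, and that multiplicity under $\pi$ is controlled by $q^{M(\ep)}$, uniformly in $n$); it proves the pressure equality without invoking upper semi-continuity of the entropy map. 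Your variational-principle route is cleaner to write down, but it implicitly relies on the identification of the separated-set pressure with the variational pressure, which as the paper notes is known under upper semi-continuity of entropy---unproblematic here since both systems are expansive subshifts. A very small notational quibble: the cylinder formula for the lifted measure is more transparently written $\mu([w_{-k}\cdots w_\ell]) = \mu^+([w_{-k}\cdots w_\ell]^+)$, where the right-hand cylinder sits on coordinates $0$ through $k+\ell$; the extra $\s^{-k}$ you inserted gives the same value by $\s$-invariance but obscures the point. Substantively, nothing is missing.
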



Since $\Psi$ is defined as $\Phi\circ \pi$ and the entropy is preserved under the natural extension, the following corollary is immediate from the above proposition.
\begin{cor}
Let $\mu$ be an equilibrium state for $(\Sig^+,\s)$ and $\Phi$. Then the natural extension of $\mu$ is an equilibrium state for $(\Sig,\s)$ and $\Psi$.
\end{cor}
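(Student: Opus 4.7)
The plan is to check directly that the natural extension $\hat\mu$ of $\mu$ realizes the supremum in the subadditive variational principle for $\Psi$ on $(\Sig,\s)$, using the preceding proposition that $P(\Phi)=P(\Psi)$ together with two standard facts about natural extensions.

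First, I would invoke the classical theorem of Rokhlin (or the standard Abramov--Rokhlin lemma applied in its simplest form) that the measure-theoretic entropy is preserved under natural extensions, so $h_{\hat\mu}(\s) = h_\mu(\s)$. Since $\pi$ is the factor map and $\hat\mu\circ\pi^{-1}=\mu$ by construction of the natural extension, this is the only nontrivial ingredient and it is applied as a black box.

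Second, because $\Psi=\Phi\circ\pi$ means $\log\psi_n = \log\vp_n\circ\pi$ for every $n$, the factor identity $\hat\mu\circ\pi^{-1}=\mu$ yields
\[
\int \log\psi_n \, d\hat\mu \;=\; \int \log\vp_n\,d\mu
\]
for every $n\in\N$, and therefore the limits
\[
\lim_{n\to\infty}\frac{1}{n}\int\log\psi_n\,d\hat\mu \;=\;\lim_{n\to\infty}\frac{1}{n}\int\log\vp_n\,d\mu
\]
agree.

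Combining these two observations with the hypothesis that $\mu$ is an equilibrium state for $\Phi$ and with the preceding proposition $P(\Phi)=P(\Psi)$, one obtains
\[
P_{\hat\mu}(\Psi) \;=\; h_{\hat\mu}(\s)+\lim_{n\to\infty}\frac{1}{n}\int\log\psi_n\,d\hat\mu \;=\; h_\mu(\s)+\lim_{n\to\infty}\frac{1}{n}\int\log\vp_n\,d\mu \;=\; P_\mu(\Phi) \;=\; P(\Phi) \;=\; P(\Psi),
\]
so by the subadditive variational principle $\hat\mu$ is an equilibrium state for $\Psi$. There is no real obstacle here; the only point that is not a pure definitional unwinding is the preservation of entropy under the natural extension, and that is standard.
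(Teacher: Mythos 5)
Your proof is correct and is exactly the argument the paper has in mind: the paper justifies the corollary in one line by invoking entropy preservation under natural extensions together with $\Psi = \Phi\circ\pi$ and the preceding proposition that $P(\Phi) = P(\Psi)$, which is precisely what you spell out.
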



\begin{prop}
There is a unique equilibrium state for a subadditive potential $\Phi$ on $(\Sig^+,\s)$, if and only if its natural extension is the unique equilibrium state for $\Psi$ on $(\Sig,\s)$.
\end{prop}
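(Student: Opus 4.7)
The plan is to show that the natural extension $\mu \mapsto \wt\mu$ sets up a bijection between $\M(\s)$ on $\Sig^+$ and $\M(\s)$ on $\Sig$ which preserves the pressure functional $P_\bullet(\cdot)$. Once this correspondence is established, uniqueness of the equilibrium state on one side is immediately equivalent to uniqueness on the other, since we already know $P(\Phi) = P(\Psi)$ from the preceding proposition and since the preceding corollary has already identified $\wt\mu$ as an equilibrium state for $\Psi$ whenever $\mu$ is an equilibrium state for $\Phi$.

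First I would verify the bijection. Given any $\nu \in \M(\s)$ on $\Sig$, the pushforward $\pi_*\nu$ is $\s$-invariant on $\Sig^+$, and by the defining universal property of the natural extension its lift $\widetilde{\pi_*\nu}$ coincides with $\nu$. Together with $\pi_*\wt\mu = \mu$, this yields the two-way correspondence $\mu \leftrightarrow \wt\mu$ between $\s$-invariant probability measures on the one-sided and two-sided shifts.

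Next I would check that corresponding measures produce the same value of $P_\bullet$. The entropy equality $h_{\wt\mu}(\s) = h_\mu(\s)$ is a classical property of the natural extension. For the subadditive term, since $\psi_n = \vp_n \circ \pi$,
$$\int \log \psi_n \, d\wt\mu = \int \log(\vp_n \circ \pi)\, d\wt\mu = \int \log \vp_n \, d(\pi_*\wt\mu) = \int \log \vp_n \, d\mu,$$
so after dividing by $n$ and taking the limit we get equality of the subadditive contributions. Therefore $P_{\wt\mu}(\Psi) = P_\mu(\Phi)$, and since $P(\Psi) = P(\Phi)$, the measure $\mu$ attains the supremum for $\Phi$ if and only if $\wt\mu$ attains the supremum for $\Psi$. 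Combined with the bijectivity of $\mu \mapsto \wt\mu$, this gives the biconditional on uniqueness of equilibrium states.

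There is no serious obstacle here; the only ingredients beyond what has already been set up are standard facts about natural extensions (invariance of entropy, unique lifting of $\s$-invariant measures), which I would invoke as classical. The argument is structurally identical to the additive case, with the role of the Birkhoff integral played by the subadditive limit $\lim n^{-1}\int \log\vp_n\, d\mu$, and the manipulation above works for any subadditive potential, not merely the ones arising from cocycles.
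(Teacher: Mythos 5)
Your proof is correct, and it is somewhat more systematic than the paper's. The paper argues essentially in one direction: it notes that distinct equilibrium states on $(\Sig,\s)$ project (under $\pi_*$) to distinct measures on $(\Sig^+,\s)$ (they must differ on some cylinder, and cylinders depending only on nonnegative coordinates separate them by shift-invariance), and that these projections remain equilibrium states since $\Psi = \Phi\circ\pi$ and entropy is preserved under natural extension; the converse direction is already covered by the preceding corollary. You instead set up the full pressure-preserving bijection $\mu \leftrightarrow \wt\mu$ between $\M(\s)$ on $\Sig^+$ and $\M(\s)$ on $\Sig$, which immediately delivers both implications at once. The two approaches are really the same underlying facts packaged differently: the paper emphasizes injectivity of $\pi_*$ on equilibrium states; you emphasize that $\pi_*$ and the natural extension are mutually inverse and that $P_{\wt\mu}(\Psi) = P_\mu(\Phi)$. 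Your version is a bit cleaner because it does not need the separate corollary as a crutch and makes the biconditional structurally transparent, at the mild cost of invoking the universal/uniqueness property of the natural extension explicitly rather than the elementary cylinder-separation argument. Both are valid.
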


\begin{proof}
It suffices to show that different equilibrium states for the natural extension project to different equilibrium states for the one-sided system. That they project to different measures follows from shift-invariance and the fact that they must differ on some cylinder set. That they project to equilibrium states follows because 
$\Psi$ is defined as $\Phi \circ \pi$ and the entropy is preserved under the natural extension.
\end{proof}

\begin{cor}\label{natural extension}
For any locally constant cocycle $\A$ and $s > 0$, the natural extension of any equilibrium state for $\Phi_{\A}$ over $(\Sig^+,\s)$ is an equilibrium state for the invertible setting with the same potential.
\end{cor}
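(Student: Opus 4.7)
The plan is to reduce this statement to a direct application of the preceding corollary on natural extensions, applied to the subadditive potential $\Phi = \Phi_\A^s$ on $(\Sig^+,\s)$. The only point that genuinely requires verification is the compatibility of the potentials across the two shifts: we must check that $\Phi_\A^s$ viewed intrinsically on $(\Sig,\s)$ coincides with the pullback $\Phi_\A^s \circ \pi$ of its one-sided counterpart through the factor map $\pi \colon \Sig \to \Sig^+$.

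This compatibility is immediate from the standing convention (recorded in the remark just before Definition~\ref{defn: fb}) that every locally constant cocycle considered in this paper depends only on the $0$-th coordinate. Under that assumption, for any $x \in \Sig$ and any $n \in \N$, the product
$$\A^n(x) = \A(\s^{n-1}x)\cdots\A(x)$$
depends only on the nonnegative coordinates $x_0, x_1, \dots, x_{n-1}$ of $x$, and therefore so does $\vps_{\A,n}(x) = \vps(\A^n(x))$. Equivalently, $\vps_{\A,n} = \vps_{\A,n} \circ \pi$ for every $n \in \N$, which is precisely the hypothesis that identifies the two-sided potential with the pullback of the one-sided one.

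With this identification in hand, no further obstacle remains: invoking the preceding corollary with $\Phi = \Phi_\A^s$ on $\Sig^+$ and $\Psi = \Phi_\A^s \circ \pi = \Phi_\A^s$ on $\Sig$, we conclude that the natural extension of any equilibrium state of $\Phi_\A^s$ on $(\Sig^+,\s)$ is an equilibrium state of $\Phi_\A^s$ on $(\Sig,\s)$. The argument is essentially a bookkeeping step: the genuine content has already been done in the preceding general natural-extension corollary, and the role of the locally constant hypothesis here is simply to ensure that the singular value potential does not depend on past coordinates.
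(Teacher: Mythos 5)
Your proof is correct and is exactly the argument the paper leaves implicit: since a locally constant cocycle depending only on the $0$-th coordinate makes $\vps_{\A,n}$ a function of the nonnegative coordinates alone, the two-sided potential $\Phi_\A^s$ coincides with the pullback $\Phi_\A^s \circ \pi$, and the preceding general corollary on natural extensions applies verbatim. This matches the paper's (unstated) reasoning.
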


Using the classical fact that mixing and ergodicity of natural extensions are equivalent to the respective properties for the one-sided systems, we see that Proposition \ref{prop: Morris equivalence} holds for two-sided shifts as well.

\section{Proof of main theorems}\label{sec: proof of main thms}
\subsection{Proof of Theorem \ref{thm: A}}
We recall the setting of Theorem A. Let $(\Sigma_T,\s)$ be a mixing subshift of finite type and $\Phi = \{\log \vp_n\}_{n \in \N}$ be a quasi-multiplicative subadditive potential with bounded distortion. Let $\mu\in\M(\s)$ be the unique equilibrium state for $\Phi$ with the Gibbs property from Proposition \ref{prop: unique eq state}, and suppose that $\mu$ is totally ergodic. We wish to show that $\mu$ is $K$. By Theorem \ref{thm: Subadditive Kprop result}, it suffices to show that $\mu$ is weak mixing.

The following proposition is essentially a reformulation of \cite[Theorem 5 (ii)]{morris2018ergodic}. The setting there is for norm potentials of irreducible locally constant cocycles; however, the proof generalizes easily to any quasi-multiplicative subadditive potentials with bounded distortion.

\begin{lem}\label{lem: total ergodicity to mixing}
Let $\Phi = \{\log \vp_n\}_{n \in \N}$ be a quasi-multiplicative subadditive potential on $\Sig$ with bounded distortion. Suppose the unique equilibrium state $\mu \in \M(\s)$ of $\Phi$ from Proposition \ref{prop: unique eq state} is totally ergodic. Then $\mu$ is mixing.
\end{lem}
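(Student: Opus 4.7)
The plan is to mimic Morris's argument for \cite[Theorem 5]{morris2018ergodic}. Morris establishes the analogous implication for norm potentials of irreducible locally constant cocycles, but the only structural features of the potential and the measure that enter his proof are quasi-multiplicativity, bounded distortion, and the subadditive Gibbs property \eqref{eq: Gibbs}. Since Proposition \ref{prop: unique eq state} guarantees the last of these for us, no cocycle-specific structure is needed and the argument transfers directly.

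I would argue by contradiction. Assuming $\mu$ is totally ergodic but not mixing, a standard approximation argument on characteristic functions produces cylinders $[\I_0], [\J_0] \in \L$ and a subsequence $n_\ell \to \infty$ with
$$\lim_{\ell\to\infty} \mu(\sigma^{-n_\ell}[\J_0] \cap [\I_0]) = L \neq \mu([\I_0])\mu([\J_0]).$$
The first step is to combine quasi-multiplicativity with the Gibbs property \eqref{eq: Gibbs} and the bounded distortion \eqref{eq: bdd distortion} to obtain a uniform multiplicative lower bound
$$\mu([\I \K \J]) \geq c \, \mu([\I])\mu([\J])$$
valid for every pair $\I, \J \in \L$ and some connecting word $\K = \K(\I,\J)$ of length at most $k$. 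A pigeonhole over the finitely many possible $\K$'s, combined with $\sigma$-invariance of $\mu$, then yields a syndetic-in-$n$ lower bound on the correlations $\mu(\sigma^{-n}[\J_0] \cap [\I_0])$, which provides the quantitative input needed for the subsequent step.

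The second and more delicate step, which is the heart of Morris's argument, is to use these syndetic lower bounds together with the hypothetical failure of mixing to extract a non-trivial factor of $(\Sig, \sigma, \mu)$ of the form of a finite-order rotation. Such a rotation would force $\sigma^d$ to fail to be ergodic with respect to $\mu$ for some integer $d \geq 2$, contradicting total ergodicity. I expect this second step to be the main obstacle, as it requires a careful spectral/combinatorial argument to produce the rotation factor from the multiplicative lower bounds on correlations; however, since none of the ingredients relies on the matrix structure of Morris's original setting, the generalization should indeed be essentially cosmetic.
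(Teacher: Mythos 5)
Your plan diverges from the paper's argument in two substantive ways, and the second one is a genuine gap.

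First, the key bound the paper derives and feeds into Morris's machinery is an \emph{upper} bound on correlations, not a lower bound. The paper's displayed computation uses only the subadditive Gibbs property \eqref{eq: Gibbs}, bounded distortion \eqref{eq: bdd distortion}, and subadditivity of $\wt{\Phi}$ (i.e., $\wt{\Phi}(\I\K\J)\leq \wt{\Phi}(\I)\wt{\Phi}(\K)\wt{\Phi}(\J)$) to obtain
$$\limsup_{k\to\infty}\mu\big([\I]\cap\sigma^{-k}[\J]\big) \leq C\,\mu([\I])\mu([\J]),$$
and it is this inequality, combined with total ergodicity, that plugs into Morris's proof of \cite[Theorem 5(ii)]{morris2018ergodic} and Ornstein's method. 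Quasi-multiplicativity enters only indirectly, through Proposition \ref{prop: unique eq state}, to guarantee uniqueness and the Gibbs property of $\mu$. Your proposed lower bound $\mu([\I\K\J])\geq c\,\mu([\I])\mu([\J])$ is a correct consequence of quasi-multiplicativity plus the Gibbs property, but it is not the ingredient the cited argument consumes, so the claim that the generalization is ``essentially cosmetic'' is based on a misidentification of the mechanism.

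Second, and more seriously, the final step of your plan does not work as stated. You propose to extract from the failure of mixing a factor of $(\Sig,\sigma,\mu)$ that is a finite-order rotation, which would contradict total ergodicity. But failure of \emph{mixing} does not produce any rotation factor: there are weak mixing systems that are not mixing, and these have trivial Kronecker factor. What does produce a rotation factor is failure of \emph{weak mixing} (via a nontrivial eigenfunction), and even then the eigenvalue could be irrational, in which case the resulting factor is an irrational rotation which is perfectly compatible with total ergodicity. Total ergodicity only rules out \emph{rational} eigenvalues, i.e., finite rotations. So the contradiction you want is unavailable on two counts: non-mixing does not by itself give an eigenfunction, and total ergodicity only eliminates a proper subclass of the possible eigenfunctions. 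The structure of the actual argument must be two-staged -- first pass from total ergodicity to weak mixing (ruling out eigenfunctions via the correlation upper bound), then pass from weak mixing to mixing (again using the upper bound and Ornstein's method) -- and neither stage reduces to extracting a finite rotation factor from non-mixing.
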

\begin{proof}
The proof of \cite[Theorem 5 (ii)]{morris2018ergodic} extends without much modification; we only point out minor modifications required to extend the proof. 

From Proposition \ref{prop: unique eq state}, it follows that $\mu$ has the Gibbs property \eqref{eq: Gibbs} with constant $C_0$. Recalling the notation $\wt{\Phi}$ from \eqref{eq: wt Phi} and denoting the constant from the bounded distortion \eqref{eq: bdd distortion} of $\Phi$ by $C_1$, for any $n \in \N$, $\I \in \L(n)$, and $x \in [\I]$ we have the following bounds on $\mu([\I])/\big(e^{-nP(\Phi)}\wt{\Phi}(\I)\big)$:  
$$(C_0C_1)^{-1} \leq C_1^{-1}\cdot  \frac{\mu([\I])}{e^{-nP(\Phi)}\vp_n(x)} \leq \frac{\mu([\I])}{e^{-nP(\Phi)}\wt{\Phi}(\I)} \leq C_0$$
Then for any cylinders $\I,\J \in \L$ of length $n$ and $m$, we have for any $k>n$,
\begin{align*}
\mu([\I] \cap f^{-k}[\J]) &=\sum\limits_{\substack{|\K| = k-n \\ \I\K\J \in \L}} \mu([\I\K\J])\\
&\leq C_0\sum\limits_{\substack{|\K| = k-n \\ \I\K\J \in \L}}e^{-(k+m)P(\Phi)}\wt{\Phi}(\I\K\J)\\
&\leq C_0\sum\limits_{\substack{|\K| = k-n \\ \I\K\J \in \L}}e^{-(k+m)P(\Phi)}\wt{\Phi}(\I)\wt{\Phi}(\K)\wt{\Phi}(\J)\\
&\leq C_0^4C_1^3 \mu([\I]) \mu([\J]) \Big( \sum\limits_{\substack{|\K| = k-n \\ \I\K\J \in \L}} \mu([\K])\Big)\\
&\leq C_0^4C_1^3 \mu([\I]) \mu([\J]).
\end{align*}
This gives $\limsup\limits_{k \to \infty}\mu([\I] \cap f^{-k}[\J]) \leq C \mu([\I])\mu([\J])$ where $C=C_0^4C_1^3$. Using this property together with total ergodicity of $\mu$, the rest of the proof from here on (i.e., promoting total ergodicity to weak mixing, and then to mixing) follows that of \cite[Theorem 5 (ii)]{morris2018ergodic} verbatim, following the method of Ornstein \cite{Ornstein72}.
\end{proof}

Theorem \ref{thm: A} now follows as an easy consequence of Proposition \ref{prop: unique eq state} which gives an equilibrium state with the Gibbs property, Lemma \ref{lem: total ergodicity to mixing} which shows that total ergodicity is enough to get weak mixing, and Theorem \ref{thm: Subadditive Kprop result}, which lifts these together to $K$.

\subsection{Proof of Theorem  \ref{thm: B}}

%

Suppose $\A \colon \Sigma \to \glr$ is a locally constant cocycle, and its image consists of matrices $\{A_1,\ldots,A_q\}$. We say $\A$ is \textit{irreducible} if there does not exist a proper subspace $V \subset \R^d$ with $A_iV = V$ for every $1 \leq i \leq q$.

The following result of \cite{feng2009lyapunov} guarantees the quasi-multiplicativity of the unique equilibrium state of $\Phi_\A$ under irreducibility:
\begin{prop}\cite[Proposition 2.8]{feng2009lyapunov}\label{prop: irred implies qm}
	Let $\A \colon \Sigma \to \glr$ be a locally constant cocycle. If $\A$ is irreducible, then the norm potential $\Phi_\A$ is quasi-multiplicative and has a unique equilibrium state $\mu_\A \in \M(\s)$. 
\end{prop}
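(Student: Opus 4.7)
The plan is to verify the two conclusions of the proposition separately. Locally constant cocycles satisfy the bounded distortion property \eqref{eq: bdd distortion} trivially with constant $C=1$ (cf.\ Remark~\ref{rem: bdd distortion}), so once quasi-multiplicativity of $\Phi_\A$ is established, the uniqueness of the equilibrium state $\mu_\A$ follows immediately from Proposition~\ref{prop: unique eq state}. Thus the substantive task is proving quasi-multiplicativity.

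I would first reformulate quasi-multiplicativity in purely linear-algebraic terms. Since $\A$ depends only on the zeroth coordinate, $\wt{\Phi}_\A(\I)=\|A_\I\|$, where $A_\I:=A_{i_{n-1}}\cdots A_{i_0}$ for $\I=i_0\cdots i_{n-1}\in \L(n)$. Invoking the singular value decomposition, $\|A_\I\|=\|A_\I v_\I\|$ is realized by a unit top right singular vector $v_\I$ with image direction given by a top left singular vector $u_\I$, and the analogous data $(u_\J,v_\J)$ exists for $A_\J$. The desired inequality $\|A_\J A_\K A_\I\|\geq c\,\|A_\I\|\|A_\J\|$ then reduces to finding a connecting word $\K$ of bounded length such that $A_\K$ carries $u_\I$ into a direction with uniform overlap with $v_\J$.

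The central step is a uniform twisting lemma: irreducibility yields $k\in\N$ and $\delta>0$ such that for every $(u,v)\in S^{d-1}\times S^{d-1}$ there exists an admissible word $\K$ with $|\K|\leq k$ and $|\langle A_\K u,v\rangle|\geq \delta\|A_\K\|$. The pointwise version is immediate: if no $\K$ worked for some fixed $(u,v)$, then $\{A_\K u:\K\in\L\}$ would lie in $v^\perp$, and its linear span would be a proper $\A$-invariant subspace, contradicting irreducibility. Upgrading this pointwise statement to a uniform one uses continuity of $(u,v)\mapsto\langle A_\K u,v\rangle$ together with compactness of $S^{d-1}\times S^{d-1}$: each point admits an open neighborhood on which a fixed word $\K_{u,v}$ maintains a strict inequality, and a finite subcover extracts a uniform length bound $k$ and a uniform constant $\delta$. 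In the SFT setting one further appeals to primitivity of the transition matrix $T$ to insert a bounded admissible transition so that $\I\K\J\in\L$, which only affects the constants.

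Combining the two ingredients, given arbitrary $\I,\J\in\L$ I would apply the twisting lemma to $(u_\I,v_\J)$ and estimate
\[
\|A_\J A_\K A_\I\|\geq \|A_\J A_\K A_\I v_\I\|=\|A_\I\|\cdot\|A_\J A_\K u_\I\|\geq \|A_\I\|\cdot\|A_\J\|\cdot|\langle A_\K u_\I,v_\J\rangle|\geq \delta\|A_\K\|\cdot\|A_\I\|\cdot\|A_\J\|,
\]
where the middle inequality follows from the SVD of $A_\J$ applied to $A_\K u_\I$. Since there are only finitely many candidate connecting words and each $A_\K$ lies in $\glr$, $\|A_\K\|$ is uniformly bounded below by a positive constant, yielding quasi-multiplicativity with a uniform $c>0$. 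The main obstacle I anticipate is the passage from the pointwise consequence of irreducibility to the uniform twisting lemma with a controlled length bound $k$; this is precisely where compactness of the sphere and the continuity of the bilinear pairings $(u,v)\mapsto \langle A_\K u,v\rangle$ are essential.
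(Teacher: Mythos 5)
Your proposal is correct and follows essentially the same route as Feng's original argument for Proposition 2.8 of \cite{feng2009lyapunov}, which the paper cites without reproducing: reduce to a linear-algebraic statement via $\wt{\Phi}_\A(\I)=\|A_\I\|$, derive from irreducibility the pointwise non-vanishing of $\langle A_\K u, v\rangle$ (since otherwise $\mathrm{span}\{A_\K u\}$ would be a proper invariant subspace), upgrade to a uniform lower bound $|\langle A_\K u,v\rangle|\geq \delta\|A_\K\|$ with bounded $|\K|$ via compactness of $S^{d-1}\times S^{d-1}$ and a finite subcover, and close the estimate using the singular value decomposition and the fact that the finitely many $A_\K\in\glr$ have norms bounded away from zero. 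The deduction of uniqueness from quasi-multiplicativity via Proposition~\ref{prop: unique eq state} together with the trivial bounded distortion of locally constant cocycles is also the intended chain of reasoning.
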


Then Theorem \ref{thm: B} is a direct consequence of Theorem \ref{thm: A} and Proposition \ref{prop: irred implies qm}:

\begin{proof}[Proof of Theorem \ref{thm: B}]
	The norm potentials $\Phi_\A$ of any locally constant cocycles $\A \colon \Sigma \to \glr$ immediately have bounded distortion with constant $C=1$. Irreducibility of $\A$ then gives quasi-multiplicativity of $\Phi_\A$ by Proposition \ref{prop: irred implies qm}. Hence, if $\mu_\A \in \M(\s)$ satisfies either of the equivalent conditions in Proposition \ref{prop: Morris equivalence}, then $\mu_\A$ is mixing which is stronger than total ergodicity, and it then follows from Theorem \ref{thm: A} that $\mu_\A$ has the $K$-property.
\end{proof}

We end this subsection with a few related remarks. First, we comment on the difference between norm potentials considered in Theorem \ref{thm: B} with similar subadditive potentials considered by Morris.
\begin{rem}\label{rem: other potentials}
	Morris \cite{morris2018ergodic, morris2019necessary} works with similar subadditive potentials. He considers locally constant cocycles $\A$ over one-sided full shifts $(\Sigma^+,\s)$, and defines the subadditive pressure of $(\A,s)$ as
	$$P(\A,s)=\lim\limits_{n \to \infty}\frac{1}{n}\log\Big(\sum\limits_{i_1,\ldots,i_n = 1}^q \|A_{i_n}\ldots A_{i_1}\|^s \Big).$$
	Making this explicit, for all $s > 0$, the corresponding subadditive potential is given by $\Psi_\A^s = \{\log(\psi_n)^s\}$, where $\psi_n(x) = \|\A^n(x)\|$. Thus, we see that the subadditive potential $(\A,1)$ in his papers agrees with the norm potential $\Phi_\A$, and in fact, $(\A,s)$ agrees with the singular value potentials $\Phi_\A^s$ when $s\leq 1$. This is not necessarily so when $s > 1$. Nevertheless, Theorem \ref{thm: A} applies to Morris' subadditive potentials $(\A,s)$ for all $s$. If $\A$ is an irreducible locally constant cocycle, then by Proposition \ref{prop: irred implies qm}, $\Psi_\A^s$ is quasi-multiplicative, and the bounded distortion condition is satisfied trivially because it is locally constant. Therefore, if the unique equilibrium state $\mu$ of $(\A,s)$ is totally ergodic, then it is $K$, establishing partial progress towards Conjecture 2 of \cite{morris2018ergodic}.
\end{rem}

\begin{rem}
For irreducible locally constant cocycles, their unique equilibrium states are shown to possess various ergodic properties depending on the assumptions. Theorem \ref{thm: B} is one result in this direction. Another such instance can be found in \cite{piraino2018weak}; Piraino established that under primivity or strong irreduciblity and proximality, the unique equilibrium states are weakly Bernoulli, and so are isomorphic to a Bernoulli shifts. On the other hand, such equilibrium states are rarely Bernoulli measures. Morris and Sert \cite{morris2019converse} established a mild assumption that prevents such equilibrium states from being Bernoulli measures on full shifts $\Sigma$. We stress that both of these results lie in the setting of Theorem B, and do not apply to the setting of the following results.

\comment{
	\textcolor{red}{We provide one example where some of these results can be applied. Consider a locally constant cocycle $\A \colon \Sigma_2 \to \gltwo$ generated by 
$$A_1 = \begin{pmatrix}
2 & 0\\
0 & 1
\end{pmatrix} \text{ and }A_2 = \begin{pmatrix}
0 & 1\\
1 & 0
\end{pmatrix}.$$
It is clear that $\A$ is irreducible, hence $\Phi_\A$ has a unique equilibrium state $\mu_\A$. Moreover, direct computation shows that $\A^2$ is also irreducible, hence the unique equilibrium state for $\Phi_{\A^2}$ must coincide with $\mu_\A$. In particular, this shows that $\mu_\A$ is ergodic with respect to $\s^2$, verifying the second condition from Proposition \ref{prop: Morris equivalence}. Theorem \ref{thm: B} then applies to show that $\mu_\A$ is $K$. }

\textcolor{red}{
On the other hand, due to the presence of a proximal element $A_1$, \cite[Theorem 5]{morris2019converse} applies to show that $\mu_\A$ is not a Bernoulli measure. By not being a Bernoulli measure, we are claiming that $\mu_\A$ is not a $(p,1-p)$-measure on $\Sigma_2$, and not claiming that $(\Sig,\s,\mu_\A)$ is not measurably isomorphic to a Bernoulli shift. In fact, while \cite{piraino2018weak} does not directly apply to show that $\mu_\A$ is weakly Bernoulli (and hence isomorphic to a Bernoulli shift) as $\A$ is not strongly irreducible, it could be that $(\Sig,\s,\mu_\A)$ may end up being measurably isomorphic to a Bernoulli shift via other methods.
}}
 
\comment{
\textcolor{blue}{I agree with your comment; I think showing that Piraino's work doesn't apply is more relevant than Morris-Sert's work doesn't apply. Anyway, to address your comment, if you have a proximal element $A$, then the semigroup containing cannot be compact. And what Morris-Sert proves is really and ``if and only if'' statement: Bernoulli measure if and only if conditions of Theorem 5 holds. So having an proximal element forces the measure to be a non-Bernoulli measure. 
As for Piraino's work, I guess the example above is not applicable to Piraino's Thm 3.3, but his Theorem 1.3 may still be applicable. So I guess we have to find an example that is neither primitive nor strongly irred and proximal. I will think about this today.} 

\textcolor{blue}{I gave it some thoughts, and I think I know what kind of example we have to construct, except none of the obvious guesses worked. Supposing we are constructing a locally constant $\gltwo$ cocycles, here are a few things we do/don't need:
\begin{enumerate}
\item proximal element (this will make our job easier and its adding this in is not a big restriction)
\item irreducibility (or quasi-multiplicativity for uniqueness of $\mu$)
\item $\mu$ is $\s^2$-ergodic (for K)
\item not primitive 
\item not strongly irred.
\end{enumerate}
If we can cook up such example, then our theorem will be applicable  but not Piraino's.
``Not strongly irreducibility'' means that our cocycle has to preserve a union of two lines. This forces our cocycles to be generated by diagonal and off diagonal matrices. But then I am not quite sure if I have an example of ``not primitive'' that meets this criteria. Technically, primitivity is stronger than irreducibility (see Piraino's work), but I do not quite have an example of such satisfying ``not strongly irreduciblity.'' As a matter of fact, I do not have an example of collection of matrices that is irreducible but not primitive; roughly I know that when the cone $K$ we are working with is the first quadrant (including the axes) and the matrices have non-negative entries, then primitivity means for some $k \in \N$, $\sum_{|I|=k}A(I)$ is a positive matrix so that it maps $K$ into its interior and irreducibility means for some $k\in \N$, $\sum\limits_{0 \leq i \leq k}\sum_{|I|=i}A(I)$ is a positive matrix (or something along this line). Maybe this is where we should start, figuring out a collection of nonnegative $\gltwo$ matrices that are irred but not primitive.
}}
\end{rem}

\subsection{Proof of Theorem  \ref{thm: C}}
Let $\A \colon \Sig \to \gltwo$ be a \hol continuous and fiber-bunched cocycle. The proof of Theorem \ref{thm: C} relies on the results of \cite{butler2019thermodynamic}.
We begin by introducing the notion of irreducibility for fiber-bunched $\glr$-cocycles. 

\begin{defn}\label{defn: irred fb}
We say a fiber-bunched cocycle $\A \in C^\alpha_b(\Sig,\glr)$ is \textit{reducible} if there exists a proper $\A$-invariant and $H^{s/u}$-invariant sub-bundle. We say $\A$ is \textit{irreducible} if it is not reducible.
\end{defn}
\begin{rem}
For fiber-bunched cocycles, irreducibility is a weaker assumption than typicality because typical cocycles are necessarily irreducible. Additionally, whenever a cocycle is both locally constant and fiber-bunched, this definition of irreducibility coincides with that of the previous section.
\end{rem}

The following proposition summarizes the results in \cite{butler2019thermodynamic}:
\begin{prop}\cite{butler2019thermodynamic}\label{prop: BP}
Let $\A \in C^\alpha_b(\Sig,\gltwo)$. If $\A$ is irreducible, then $\Phi_\A$ is quasi-multiplicative, and hence, has a unique equilibrium state $\mu_\A \in \M(\s)$. 

If $\A$ is reducible, then $\A$ is \hol continuously conjugated to another $\gltwo$-cocycle $\B$ taking values in the group of upper triangular matrices:
\begin{equation}\label{eq: B}
\B(x):=\begin{pmatrix}
a(x) & b(x)\\
0 & c(x)
\end{pmatrix}.
\end{equation}
The set of ergodic equilibrium states of $\Phi_\A$ is a subset of $\{\mu_{\log|a|},\mu_{\log|c|}\}$ where $\mu_{\log|\tau|}$, $\tau \in \{a,c\}$, is the unique equilibrium state for $\log|\tau|$. 

Moreover, $\Phi_\A$ has two distinct ergodic equilibrium states if and only if
\begin{enumerate}
\item $\log|a|$ is not cohomologous to $\log|c|$, and
\item $P(\log|a|) = P(\log|c|)$.
\end{enumerate}
Otherwise, $\Phi_\A$ has a unique equilibrium state.
\end{prop}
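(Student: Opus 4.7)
The plan is to handle the two dichotomous cases (irreducible vs.\ reducible) separately, using canonical holonomies to reduce everything to a statement about how norms of products align with invariant line fields.

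\textbf{Irreducible case.} The goal is quasi-multiplicativity; uniqueness of the equilibrium state then follows from Proposition~\ref{prop: unique eq state} (bounded distortion of $\Phi_\A$ for fiber-bunched cocycles is part of Remark~\ref{rem: bdd distortion}). For any admissible $\I \in \L(n)$, fix a basepoint $x_\I \in [\I]$ and let $u(\I) \in \mathbb{RP}^1$ be the most-expanded direction of $\A^n(x_\I)$ and $s(\I)$ the most-contracted direction. Because $\A$ is fiber-bunched, $H^{s/u}$ are Hölder, and up to a multiplicative constant we have $\|\A^n(x_\I)v\| \asymp \|\A^n(x_\I)\|\cdot |\langle v, u(\I)^\perp\rangle|$ for unit vectors $v$. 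To connect two words $\I, \J$, we need a word $\K$ of bounded length so that the image of $u(\I)$ under the holonomy-transported action of $\A^{|\K|}$ avoids the kernel of $s(\J)$. The key input is irreducibility combined with the canonical holonomies: the closed $H^{s/u}$- and $\A$-invariant subsets of $\Sig\times\mathbb{RP}^1$ project to proper subsets only if a proper invariant sub-bundle exists, which is excluded. A compactness/contradiction argument (producing a limiting holonomy-invariant line field otherwise) yields a finite set of connecting words $\{\K_1,\dots,\K_N\}$ so that for every $\I,\J$ some $\K_i$ aligns $u(\I)$ away from $s(\J\text{'s})$ kernel, giving $\wt{\Phi}_\A(\I\K_i\J) \geq c\,\wt{\Phi}_\A(\I)\wt{\Phi}_\A(\J)$.

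\textbf{Reducible case: conjugation to triangular form.} Reducibility supplies a proper $\A$- and $H^{s/u}$-invariant sub-bundle $L \subset \Sig\times\R^2$, i.e.\ a line field $L(x)$. Joint holonomy-invariance forces $L$ to be Hölder continuous in $x$ (this uses the Hölder regularity \eqref{eq: hol holder} of $H^{s/u}$ together with a local product coordinate argument). Choose a Hölder unit vector field $e_1(x)\in L(x)$ and any Hölder complement $e_2(x)$; the change-of-basis matrix $\CC(x)\in \gltwo$ is Hölder, and $\B := \CC(\s x)^{-1}\A(x)\CC(x)$ has the upper-triangular shape \eqref{eq: B}. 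By Remark~\ref{rem: thermo conj inv}, equilibrium states of $\Phi_\A$ and $\Phi_\B$ are identified.

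\textbf{Pressure computation and identification of equilibrium states.} For triangular $\B$ one computes $\B^n(x) = \begin{pmatrix} a^{(n)}(x) & b^{(n)}(x) \\ 0 & c^{(n)}(x)\end{pmatrix}$ where $a^{(n)}, c^{(n)}$ are Birkhoff products of $a, c$. The norm therefore satisfies
\[
\max\{|a^{(n)}(x)|, |c^{(n)}(x)|\}\ \leq\ \|\B^n(x)\|\ \leq\ |a^{(n)}(x)| + |c^{(n)}(x)| + |b^{(n)}(x)|,
\]
and a standard expansion gives $|b^{(n)}(x)| \leq \sum_{k=0}^{n-1} |a^{(n-1-k)}(\s^{k+1}x)|\cdot|b(\s^k x)|\cdot|c^{(k)}(x)|$. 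Taking $\tfrac1n\log$ and using that $\log|b|$ is bounded from above, one obtains $\lim\tfrac1n\log\|\B^n\| = \max(\int \log|a|d\mu, \int \log|c|d\mu)$ for every ergodic $\mu$, and consequently $P(\Phi_\A) = P(\Phi_\B) = \max(P(\log|a|), P(\log|c|))$. For any equilibrium state $\mu$ of $\Phi_\A$, equality in the variational principle forces $\mu$ to be an equilibrium state for whichever of $\log|a|, \log|c|$ achieves the maximal pressure.

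\textbf{Uniqueness dichotomy.} Each of $\log|a|, \log|c|$ is Hölder over a mixing shift of finite type, hence has a unique equilibrium state $\mu_{\log|a|}, \mu_{\log|c|}$ by Proposition~\ref{prop: bowen thm}. If $P(\log|a|) \neq P(\log|c|)$, only one of these is an equilibrium state for $\Phi_\A$. If $P(\log|a|) = P(\log|c|)$, both are; they coincide iff $\log|a|$ and $\log|c|$ are cohomologous (the standard fact that Hölder potentials share an equilibrium state iff they differ by a coboundary plus a constant, combined with equal pressure). This gives exactly the stated characterization. The main obstacle I anticipate is verifying that the off-diagonal Birkhoff-product $b^{(n)}$ contributes only subexponentially — i.e., cannot inflate the top Lyapunov exponent beyond $\max$ of the two diagonal ones — which needs the elementary but careful estimate above rather than a black-box invocation.
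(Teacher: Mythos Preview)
The paper does not prove this proposition; it is quoted from \cite{butler2019thermodynamic}, and the only additional content is the remark immediately following it. That remark, however, tells you that the irreducible case in \cite{butler2019thermodynamic} is \emph{not} handled by a direct compactness argument of the type you sketch, but via a dichotomy: either $\Phi_\A$ is quasi-multiplicative outright, or $\A$ is H\"older conjugate into the group of conformal linear transformations (in which case $\|\A^n(x)\|$ is uniformly comparable to $|\det \A^n(x)|^{1/2}$, a multiplicative quantity, so quasi-multiplicativity is trivial).

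Your direct approach in the irreducible case has a gap exactly where this dichotomy enters. The scheme ``if no bounded family of connecting words works, a compactness argument produces a limiting holonomy-invariant line field'' is the right intuition, but for fiber-bunched (as opposed to locally constant) cocycles it does not go through as stated. The most-expanded direction $u(\I)$ is only well-defined when $\A^{|\I|}(x_\I)$ has a singular value gap; near the conformal regime it either fails to exist or varies discontinuously with $\I$. More seriously, a subsequential limit of pairs $(u(\I_n),s(\J_n))$ yields a direction at a \emph{single} point, not a continuous sub-bundle over $\Sig$, and propagating it by holonomies requires compatibility conditions you have not verified. This is precisely why \cite{butler2019thermodynamic} separates off the conformal-conjugate case: in the remaining (dominated) case there is enough structure to anchor the limiting line field, while the conformal case is handled by the trivial observation above. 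Your sketch would need to reproduce this case split to be complete.

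Your reducible-case analysis is correct and is essentially the argument in \cite{butler2019thermodynamic}: H\"older conjugation to upper-triangular form via the holonomy-invariant line, the identity $\lambda_1(\B,\mu)=\max\{\int\log|a|\,d\mu,\int\log|c|\,d\mu\}$ for ergodic $\mu$ (your estimate on $b^{(n)}$ is the right way to control the off-diagonal term), the resulting formula $P(\Phi_\A)=\max\{P(\log|a|),P(\log|c|)\}$, and the cohomology criterion for $\mu_{\log|a|}=\mu_{\log|c|}$ all match the intended proof.
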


\begin{rem}
In \cite{butler2019thermodynamic}, it was shown that $\Phi_\A$ has a unique equilibrium state if $\A$ is irreducible. It was done via the dichotomy that either $\Phi_\A$ is quasi-multiplicative or $\A$ is \hol continuously conjugated into the group of conformal linear transformations. The latter case is clearly also quasi-multiplicative.
Hence, we stated in Proposition \ref{prop: BP} that $\Phi_\A$ is quasi-multiplicative when $\A$ is irreducible.
\end{rem}

\begin{lem}\label{lem: reducible Bernoulli}
If $\A \in C^\alpha_b(\Sig,\gltwo)$ is reducible, then all ergodic equilibrium states of $\Phi_\A$ are Bernoulli.
\end{lem}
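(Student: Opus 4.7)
The plan is to reduce the problem to the classical additive thermodynamic formalism over $(\Sig,\s)$. By Proposition \ref{prop: BP}, reducibility of $\A$ yields a \hol continuous conjugacy to an upper triangular $\gltwo$-cocycle $\B$ of the form \eqref{eq: B}, and Remark \ref{rem: thermo conj inv} ensures that $\Phi_\A$ and $\Phi_\B$ have identical equilibrium states as measures on $\Sig$. Proposition \ref{prop: BP} further identifies the ergodic equilibrium states of $\Phi_\B$ (hence of $\Phi_\A$) as a subset of $\{\mu_{\log|a|},\mu_{\log|c|}\}$, namely the unique equilibrium states for the additive potentials $\log|a|$ and $\log|c|$.

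It therefore suffices to show that $\mu_{\log|a|}$ and $\mu_{\log|c|}$ are Bernoulli. Since $\B \in C^\alpha(\Sig,\gltwo)$, the entries $a$ and $c$ are \hol continuous, and because $ac = \det \B$ never vanishes on the compact space $\Sig$, both $a$ and $c$ are bounded away from $0$ and $\infty$; consequently $\log|a|,\log|c| \in C^\alpha(\Sig,\R)$. I then invoke the classical Bowen--Ruelle--Sinai thermodynamic formalism for \hol continuous potentials over a mixing subshift of finite type, which is the symbolic counterpart to Proposition \ref{prop: bowen thm} and is indeed the engine driving that proposition via the Markov coding. This asserts that the unique equilibrium state of a \hol potential over a mixing SFT is Bernoulli, and so $\mu_{\log|a|}$ and $\mu_{\log|c|}$ are Bernoulli, completing the argument.

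There is no genuine obstacle in this argument; the lemma is essentially a bookkeeping consequence of the structural decomposition in Proposition \ref{prop: BP} together with classical additive results. The only minor points requiring attention are the verification that $\log|a|$ and $\log|c|$ inherit \hol regularity (immediate from $\B$ being $\gltwo$-valued on the compact space $\Sig$) and the transfer of equilibrium states through the \hol conjugacy between $\A$ and $\B$, which is handled by Remark \ref{rem: thermo conj inv}.
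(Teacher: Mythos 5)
Your proposal is correct and follows essentially the same route as the paper: reduce via Proposition \ref{prop: BP} to the observation that every ergodic equilibrium state of $\Phi_\A$ lies in $\{\mu_{\log|a|},\mu_{\log|c|}\}$, then invoke the classical result (Proposition \ref{prop: bowen thm}, or equivalently its symbolic Bowen--Ruelle--Sinai form) that unique equilibrium states of \hol potentials over mixing subshifts are Bernoulli. You add the minor but sensible verification that $\log|a|$ and $\log|c|$ are \hol (using that $ac = \det\B$ is bounded away from zero on the compact $\Sig$), which the paper leaves implicit.
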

\begin{proof}
Since the set of equilibrium states of $\Phi_\A$ is a subset of $\{\mu_{\log|a|},\mu_{\log|c|}\}$ from Proposition \ref{prop: BP} and both $\mu_{\log|a|}$ and $\mu_{\log|c|}$ are Bernoulli from Proposition \ref{prop: bowen thm}, our claim follows.
\end{proof}

For each $n \in \N$, consider $\A^n$ as a cocycle over $(\Sig,\s^n)$ and denote the corresponding norm potential by $\Phi_{\A^n}$. It can be easily checked from the definition that if $\A$ is fiber-bunched over $(\Sig,\s)$, then so is $\A^n$ over $(\Sig,\s^n)$.

\begin{lem}\label{lem: nP} For any $n\in \N$, we have $P(\Phi_{\A^n}) = nP(\Phi_{\A})$. Moreover, any equilibrium state $\mu\in \M(\s)$ of $\Phi_\A$ is an equilibrium state of $\Phi_{\A^n}$.
\end{lem}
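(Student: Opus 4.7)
The starting observation is that $(\A^n)^k(x) = \A^{nk}(x)$, so the $k$-th term of $\Phi_{\A^n}$ is $\log\|\A^{nk}(x)\| = \log\vp_{\A,nk}(x)$; in other words, $\Phi_{\A^n}$ viewed over $(\Sig,\s^n)$ is the subsequence $\{\log\vp_{\A,nk}\}_{k\in\N}$ of $\Phi_\A$. The plan is to apply the subadditive variational principle \eqref{eq: var prin} to both $\Phi_\A$ and $\Phi_{\A^n}$ and check the inequality $P(\Phi_{\A^n}) = nP(\Phi_\A)$ in both directions; the ``moreover'' part will then be immediate.

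For the inequality $P(\Phi_{\A^n}) \geq nP(\Phi_\A)$, I would take any $\mu \in \M(\s) \subset \M(\s^n)$ and verify that $P_\mu(\Phi_{\A^n}) = nP_\mu(\Phi_\A)$. The entropy side is Abramov's formula $h_\mu(\s^n) = nh_\mu(\s)$. On the potential side, since the full-sequence limit $\lim_{m\to\infty}\tfrac1m\int\log\vp_{\A,m}\,d\mu$ exists for $\s$-invariant $\mu$ by subadditivity, the subsequence limit $\lim_{k\to\infty}\tfrac1k\int\log\vp_{\A,nk}\,d\mu$ equals $n$ times this value. Taking the supremum over $\M(\s)$ then yields the lower bound.

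For the reverse inequality $P(\Phi_{\A^n}) \leq nP(\Phi_\A)$, I would, given an arbitrary $\nu \in \M(\s^n)$, symmetrize it into $\mu := \tfrac1n\sum_{i=0}^{n-1}\s^i_*\nu \in \M(\s)$ and show that $P_\mu(\Phi_{\A^n}) \geq P_\nu(\Phi_{\A^n})$. For the entropy piece, concavity of $\nu \mapsto h_\nu(\s^n)$ together with $\s^n$-invariance of each $\s^i_*\nu$ gives $h_\mu(\s^n) \geq \tfrac1n\sum h_{\s^i_*\nu}(\s^n) = h_\nu(\s^n)$. For the potential piece, the key identity is that the subadditive averages are invariant under $\s^i_*$, i.e., $\lim_k \tfrac1k\int\log\vp_{\A,nk}\,d(\s^i_*\nu) = \lim_k \tfrac1k\int\log\vp_{\A,nk}\,d\nu$. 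This follows from subadditivity of $\Phi_\A$, which yields the two-sided bound $|\log\vp_{\A,nk}\circ\s^i - \log\vp_{\A,nk}| \leq 2\sup|\log\vp_{\A,i}|$; since $\A$ is $\glr$-valued and $\Sig$ is compact, $\log\vp_{\A,i}$ is bounded, so after dividing by $k$ the correction vanishes.

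Combining, for every $\nu \in \M(\s^n)$ we get $P_\nu(\Phi_{\A^n}) \leq P_\mu(\Phi_{\A^n}) = nP_\mu(\Phi_\A) \leq nP(\Phi_\A)$, and taking the supremum gives the upper bound. The ``moreover'' assertion is then immediate: if $\mu$ is an equilibrium state of $\Phi_\A$, then $P_\mu(\Phi_{\A^n}) = nP_\mu(\Phi_\A) = nP(\Phi_\A) = P(\Phi_{\A^n})$, so $\mu$ is an equilibrium state of $\Phi_{\A^n}$. The only mildly subtle point in the whole argument is the $\s$-averaging step in the upper bound, where one must invoke subadditivity plus boundedness of $\log\vp_{\A,i}$ to conclude that the potential term is the same for $\nu$ and its $\s$-translates.
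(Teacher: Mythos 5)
Your proof follows the paper's argument almost line for line: apply the subadditive variational principle, compute $P_\mu(\Phi_{\A^n}) = nP_\mu(\Phi_\A)$ for $\s$-invariant $\mu$ via Abramov and the subsequence limit to get one inequality, and for the reverse symmetrize an arbitrary $\s^n$-invariant measure into a $\s$-invariant one, using (a) affinity/concavity of entropy together with the conjugacy $\s^i : (\Sig,\s^n,\nu)\to(\Sig,\s^n,\s^i_*\nu)$ and (b) boundedness in $k$ of $\log\vp_{\A,nk}\circ\s^i - \log\vp_{\A,nk}$. The ``moreover'' clause then drops out exactly as you say. The paper phrases step (b) as ``$\log\vp_{\A^n,m}\circ\s^i$ and $\log\vp_{\A^n,m}$ are uniformly comparable,'' which is what you are showing.

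One small imprecision worth flagging: the explicit bound $|\log\vp_{\A,nk}\circ\s^i - \log\vp_{\A,nk}| \leq 2\sup|\log\vp_{\A,i}|$ does not follow from subadditivity alone, and is not the right constant. For example, if $\A^i(x)$ has $\|\A^i(x)\| = 1$ but $\|(\A^i(x))^{-1}\|$ large, your claimed right-hand side is $0$ while the left-hand side need not be. Subadditivity only gives the one-sided estimate; the two-sided bound requires invertibility via the cocycle identity $\A^{nk}(\s^i x) = \A^i(\s^{nk}x)\,\A^{nk}(x)\,(\A^i(x))^{-1}$, which yields the constant $\sup_y \log\|\A^i(y)\| + \sup_y\log\|(\A^i(y))^{-1}\|$. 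This is still finite and $k$-independent by exactly the compactness and $\glr$-valuedness you invoke, so the conclusion stands; only the formula for the constant needs adjustment.
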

\begin{proof}
We proceed via the variational principle \eqref{eq: var prin}. Observe that $\vp_{\A^n,m}= \vp_{\A,mn}$. Then, we see that if $\mu\in\M(\s)$,
$$h_\mu(\sigma^n) + \lim\limits_{m\to\infty}\frac{1}{m}\int\log\vp_{\A^n,m}\,d\mu = nh_\mu(\sigma) + n\lim\limits_{m\to\infty}\frac{1}{mn}\int\log\vp_{\A,mn}\,d\mu.$$
Considering $\mu$ as a $\s^n$-invariant measure, we have just shown that $P_\mu(\Phi_{\A^n}) = nP_\mu(\Phi_\A)$ and the variational principle implies that $nP(\Phi_{\A}) \leq P(\Phi_{\A^n})$.

For the reverse inequality, take $\mu\in\M(\sigma^n)$ and define $\displaystyle \nu = \sum_{i=0}^{n-1}\frac{(\sigma^i)_*\mu}{n}$. Then $\nu$ is $\sigma$-invariant, and furthermore, $h_\mu(\sigma^n) = h_\nu(\sigma^n) = nh_\nu(\sigma).$
Since $\A$ is continuous and $\Sig$ is compact, for any $0 \leq i \leq n-1$, two functions $\log \vp_{\A^n,m}\circ \s^i $ and $\log \vp_{\A^n,m}$ are uniformly comparable. Hence, $$\displaystyle \lim\limits_{m\to\infty}\frac{1}{m} \int\log \vp_{\A^n,m}\circ \s^i \,d\mu= \lim\limits_{m\to\infty}\frac{1}{m} \int\log \vp_{\A^n,m}\, d\mu.$$
Then it follows that
\begin{align*}
\lim\limits_{m\to\infty}\frac{1}{mn}\int\log\vp_{\A,mn}d\nu &= \lim\limits_{m\to\infty}\frac{1}{mn}\sum_{i=0}^{n-1}\frac{1}{n}\int\log\vp_{\A^n,m}\circ\sigma^i\,d\mu
\\
&=\frac{1}{n}\lim\limits_{m\to\infty}\frac{1}{m}\int\log\vp_{\A^n,m}\,d\mu.
\end{align*}
Therefore, $P_\mu(\Phi_{\A^n}) = nP_\nu(\Phi_\A)$. This gives the reverse inequality, and the result follows. That any equilibrium state of $\Phi_\A$ is an equilibrium state of $\Phi_{\A^n}$ is now a direct consequence.
\end{proof}

We are now ready to prove Theorem \ref{thm: C}.

\begin{proof}[Proof of Theorem \ref{thm: C}]
In view of Lemma \ref{lem: reducible Bernoulli} it suffices to focus on irreducible $\gltwo$-cocycles. Let $\A \in C^\alpha_b(\Sig,\gltwo)$ be irreducible, and $\mu_\A \in \M(\s)$ be the unique equilibrium state for $\Phi_\A$ from Proposition \ref{prop: BP}.

We then consider the cocycle $\A^2$ over $(\Sig,\s^2)$. Noting that Proposition \ref{prop: BP} also applies to $\A^2$, we divide into two cases depending on the number of equilibrium states of $\Phi_{\A^2}$.
\\

\noindent\textbf{Case 1: $\Phi_{\A^2}$ has a unique equilibrium state.}

Such a unique equilibrium state must be $\mu_\A$ by Lemma \ref{lem: nP}. Uniqueness then implies that $(\Sig,\s^2,\mu_\A)$ is ergodic. We claim that in fact, $\mu_\A$ is totally ergodic, which by Theorem \ref{thm: A}, would imply that $\mu_\A$ is $K$.

Assume not for contradiction. Let $n \in \N$ be the least integer such that $\mu_\A$ is not ergodic with respect to $(\Sig,\s^n)$. As $(\Sig,\s,\mu_\A)$ and $(\Sig,\s^2,\mu_\A)$ are ergodic, we know that $n\geq 3$. Since $\A^n$ is still a fiber-bunched $\gltwo$-cocycle over $(\Sig,\s^n)$, $\Phi_{\A^n}$ has at most two distinct ergodic equilibrium states by Proposition \ref{prop: BP}. Furthermore, in the proof of \cite[Theorem 2]{morris2019necessary}, Morris showed that the number of distinct ergodic equilibrium states for $\Phi_{\A^n}$ bounds $n$. Therefore, $2\geq n\geq 3$, a contradiction.\\

\noindent\textbf{Case 2: $\Phi_{\A^2}$ has multiple equilibrium states.}
From Proposition \ref{prop: BP}, $\A^2$ over $(\Sig,\s^2)$ must be reducible and $\Phi_{\A^2}$ must have two distinct ergodic equilibrium states $\mu_1,\mu_2 \in \M(\s^2)$.
In fact, denoting the $\A^2$-invariant and $H^{s/u}$-invariant line bundle by $L_1$, consider another line bundle $L_2$ defined by $L_2(\s x):=\A(x)L_1(x)$. Since $\A$ is irreducible, $L_2$ is different from $L_1$. We then have $L_1(\s x) = \A (x) L_2(x)$ from the $\A^2$-invariance of $L_1$, and $L_2$ is also $\A^2$-invariant.

For each $x\in \Sig$, let $\CC(x) \in \gltwo$ be the unique linear map that takes the standard basis of $\R^2$ into $\{L_1(x),L_2(x)\}$. Then $\B(x):=\CC(\s x)^{-1}\A(x)\CC(x)$ exchanges the coordinate axes of $\R^2$, and hence must be of the form specified in Theorem \ref{thm: C}:
$$\B(x) = \begin{pmatrix}
0 & a(x)\\
b(x) & 0
\end{pmatrix}.$$
Then $\B^2(x)$ is the diagonal matrix given by $\text{diag}(a(\s x)b(x),a(x)b(\s x))$. Moreover, two potentials $\a(x):=\log |a(\s x)b(x)|$ and $\b(x):=\log |b(\s x)a(x)|$ have the same pressure (with respect to $\s^2$), and their $\s^2$-ergodic equilibrium states are $\mu_1$ and $\mu_2$, respectively, each of which is Bernoulli by Lemma \ref{lem: reducible Bernoulli}. From the assumption that $\mu_1$ and $\mu_2$ are distinct, we must have that $\s_*\mu_1 = \mu_2$ and $\s_*\mu_2=\mu_1$. This is because $\s_*\mu_1$ is an $\s^2$-ergodic invariant measure and an equilibrium state for $\Phi_{\A^2}$, and hence it must be either $\mu_1$ itself or $\mu_2$. However, it cannot be equal to $\mu_1$ as this would imply the $\s$-invariance of $\mu_1$, and by the uniqueness of the equilibrium state $\mu_\A$ for $\Phi_\A$, this would imply $\mu_1 = \mu_\A$. But applying the same argument to $\mu_2$ contradicts $\mu_1$ and $\mu_2$ being distinct measures.


We will now show that $\mu_\A$ is the average of $\mu_1$ and $\mu_2$. Indeed, $\displaystyle \frac{1}{2}(\mu_1+\s _*\mu_1) = \displaystyle \frac{1}{2}(\mu_1+\mu_2)$ is $\s$-invariant from the $\s^2$-invariance of $\mu_1$, and is an equilibrium state for $\Phi_\A$. Since $\mu_\A$ is the unique equilibrium state for $\Phi_\A$, we must have that $\displaystyle \mu_\A=\frac{1}{2}(\mu_1+ \mu_2)$. 
\end{proof}

\subsection{Proof of Theorem  \ref{thm: D}}
For any $\glr$-cocycle $\A$, $n \in \N$, and $s>0$, similar to the notation from the previous subsection we denote by $\Phi_{\A^n}^s$ the $s$-singular value potential of $\A^n$ with respect to $(\Sig,\s^n)$. 

The following proposition gives us that the singular value potentials of typical cocycles have unique equilibrium states:
\begin{prop}\cite[Theorem B]{park2019quasi}\label{prop: typical unique eq} Let $\A \in C^\alpha_b(\Sig,\glr)$ be an $\alpha$-\hol and fiber-bunched cocycle. If $\A$ is typical, then $\Phi_\A^s$ is quasi-multiplicative, and hence, has a unique equilibrium state $\mu_{\A,s} \in \M(\s)$ for every $s \in [0,\infty)$.
\end{prop}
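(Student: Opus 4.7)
Since bounded distortion of $\Phi_\A^s$ for fiber-bunched $\A$ is the standing observation of Remark \ref{rem: bdd distortion}, Proposition \ref{prop: unique eq state} reduces the entire statement to showing that $\Phi_\A^s$ is quasi-multiplicative. My first move is to decompose the singular value function through exterior powers: for $s \in [0,d]$,
\begin{equation*}
\vps(A) \;=\; \|A^{\wedge \lfloor s \rfloor}\|^{1-\{s\}}\,\|A^{\wedge \lceil s \rceil}\|^{\{s\}},
\end{equation*}
with $\wedge^0$ read as the scalar $1$, while for $s > d$ one has $\vps(A) = |\det A|^{s/d}$, which is multiplicative and requires nothing more. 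Since $\A$ being $t$-typical means that the same periodic point $p$ and homoclinic point $z$ witnessing 1-typicality also provide pinching and twisting for the exterior power cocycle $\A^{\wedge t}$, the task becomes to establish, simultaneously in $t \in \{1,\ldots,d-1\}$ and using a common connecting word, quasi-multiplicativity of each norm potential $\Phi_{\A^{\wedge t}}$.

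For the norm-potential level, my plan is as follows. Given $\I,\J \in \L$, I would pick $y_\I \in [\I]$ and $y_\J \in [\J]$ realizing the suprema in \eqref{eq: wt Phi}. At each level $t$, the singular value decomposition of the iterated cocycle exposes a ``top expanding direction'' for $(\A^{\wedge t})^{|\I|}(y_\I)$ and a ``top expanding covector'' for $(\A^{\wedge t})^{|\J|}(y_\J)$. By pinching, a long iterate $P^n := \A^{n \cdot \mathrm{per}(p)}(p)$ collapses projective space onto the top eigenline of $P$, and the same holds for $P^{\wedge t}$ on every exterior power; by twisting, the holonomy loop $\psi_p^z = H^s_{z,p}\circ H^u_{p,z}$ carries each eigendirection of $P^{\wedge t}$ into general position with respect to the invariant hyperplanes at level $t$. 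Inserting between $\I$ and $\J$ a word of the form (iterate shadowing $p$)$\cdot$(transit to $z$)$\cdot$(iterate shadowing $p$) of bounded length, and converting the resulting alignment statement on projective space into a norm inequality by means of the Hölder control \eqref{eq: hol holder} on canonical holonomies, I would obtain a connecting word $\K$ of length bounded by some $k \in \N$ together with a point $x \in [\I\K\J]$ such that
\begin{equation*}
\|(\A^{\wedge t})^{|\I\K\J|}(x)\| \;\geq\; c_t\,\|(\A^{\wedge t})^{|\I|}(y_\I)\|\,\|(\A^{\wedge t})^{|\J|}(y_\J)\|
\end{equation*}
for every $t$ simultaneously, with $c_t > 0$ independent of $\I$ and $\J$. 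Taking suprema yields quasi-multiplicativity of each $\Phi_{\A^{\wedge t}}$, and multiplying these inequalities according to the factorization of $\vps$ above propagates the bound to $\Phi_\A^s$ for every $s \geq 0$.

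The chief obstacle is that a single connecting word $\K$ must do duty at every exterior power level at once; this is exactly what forces typicality to be imposed on each $\A^{\wedge t}$ through Definitions \ref{defn: 1-typical} and \ref{defn: t-typical}, and the argument would collapse for merely $1$-typical cocycles as soon as $d \geq 3$. A secondary technical point is controlling the errors absorbed when one passes through canonical holonomies at the junctions between $\I$, $\K$, and $\J$; these are handled by combining \eqref{eq: hol holder} with the uniform bounded distortion of Remark \ref{rem: bdd distortion}, which together allow the multiplicative constant in the definition of quasi-multiplicativity to be chosen uniformly in $\I$ and $\J$.
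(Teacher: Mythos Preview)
The paper does not prove this proposition; it is quoted as \cite[Theorem B]{park2019quasi} and invoked without argument. Your outline is the strategy of that reference: reduce to simultaneous quasi-multiplicativity of the norm potentials $\Phi_{\A^{\wedge t}}$ via the factorisation $\vps(A)=\|A^{\wedge\lfloor s\rfloor}\|^{1-\{s\}}\|A^{\wedge\lceil s\rceil}\|^{\{s\}}$, and then use pinching plus twisting at every exterior power to manufacture a single connecting word of bounded length.

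Two cautions. First, what you have written is an outline, not a proof: the genuinely hard step in \cite{park2019quasi} is the \emph{uniformity} of the connecting-word length, which requires a compactness argument on the space of possible ``top singular directions'' at each level $t$ together with careful bookkeeping of how many iterates of $P$ are needed to collapse an arbitrary direction close enough to the eigenline; your phrase ``of bounded length'' hides all of this. Second, note that $\A^{\wedge t}$ need not itself be fiber-bunched, so one cannot simply invoke 1-typicality of $\A^{\wedge t}$ as a black box; the argument works because the canonical holonomies $(H^{s/u})^{\wedge t}$ are inherited from those of $\A$ regardless of fiber-bunching at the exterior-power level, a point the paper records just before Definition~\ref{defn: t-typical}.
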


In view of Theorem \ref{thm: A} and Proposition \ref{prop: typical unique eq}, the only missing ingredient in proving Theorem \ref{thm: D} is the total ergodicity of $\mu_{\A,s}$ which we establish below.
The idea of the proof is similar to that of \cite[Theorem 5 (i)]{morris2019necessary}. 
\begin{prop}\label{prop: typical then totally ergodic}
Let $\A \in C^\alpha_b(\Sig,\glr)$ be typical and $s \in [0,\infty)$. Then the unique equilibrium state $\mu_{\A,s} \in \M(\s)$ of $\Phi_\A^s$ is totally ergodic.
\end{prop}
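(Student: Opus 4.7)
The plan is to argue by contradiction, mimicking the strategy used in Case 1 of Theorem \ref{thm: C} but applied at every power. Suppose $\mu:=\mu_{\A,s}$ is not totally ergodic, and let $n\geq 2$ be the smallest integer for which $\mu$ fails to be $\s^n$-ergodic. I will show that $\A^n$ is still typical over $(\Sig,\s^n)$, so that $\Phi_{\A^n}^s$ admits a unique equilibrium state by Proposition \ref{prop: typical unique eq}; then I will show $\mu$ is an equilibrium state of $\Phi_{\A^n}^s$ but its $\s^n$-ergodic decomposition exhibits multiple equilibrium states, yielding the contradiction.

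First I would establish the analog of Lemma \ref{lem: nP} for singular value potentials: since $\vp^s_{\A^n,m}=\vp^s_{\A,mn}$, the same Birkhoff-averaging argument that appears in the proof of Lemma \ref{lem: nP} gives $P(\Phi_{\A^n}^s)=nP(\Phi_\A^s)$ and shows that any $\s$-invariant equilibrium state of $\Phi_\A^s$ is an equilibrium state of $\Phi_{\A^n}^s$ viewed as a subadditive potential over $(\Sig,\s^n)$. The only point requiring care is the uniform comparability of $\log\vp^s_{\A^n,m}\circ\s^i$ and $\log\vp^s_{\A^n,m}$, which follows from the submultiplicativity of $\vp^s$ and the fact that $\vp^s(\A^i(\cdot))$ and $\vp^s(\A^i(\cdot)^{-1})$ are uniformly bounded on the compact space $\Sig$ since $\A$ takes values in $\glr$. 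In particular, $\mu$ is an equilibrium state of $\Phi_{\A^n}^s$ over $(\Sig,\s^n)$.

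The main step is to verify that $\A^n$ is typical with respect to $(\Sig,\s^n)$. Fiber-bunching of $\A^n$ is routine. For the pinching/twisting, I would use the same periodic point $p$ and homoclinic point $z$ from the typicality of $\A$. The canonical holonomies of $\A^n$ coincide with those of $\A$, since the defining limit $\lim_{k\to\infty}(\A^n)^k(y)^{-1}(\A^n)^k(x)=\lim_{k\to\infty}\A^{nk}(y)^{-1}\A^{nk}(x)$ is a subsequential limit of the defining sequence for $\A$ (which converges), so the holonomy loop $\psi_p^z$ is unchanged. If $p$ has $\s$-period $m$, then $p$ is $\s^n$-periodic of period $m':=m/\gcd(m,n)$, and the corresponding return matrix under $\A^n$ is $P^{n/\gcd(m,n)}$ where $P=\A^m(p)$. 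Powers of $P$ preserve eigenvectors and preserve pinching since if the absolute values $|\lambda_i|$ are distinct then so are $|\lambda_i|^{n/\gcd(m,n)}$, and the same passage to absolute values shows the products-of-$t$-eigenvalues condition in Definition \ref{defn: t-typical} is preserved. The twisting conditions for $\A^n$ and $(\A^n)^{\wedge t}$ then follow immediately from those for $\A$ because both the eigenvectors and $\psi_p^z$ are unchanged.

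Finally, by Proposition \ref{prop: typical unique eq} applied to $\A^n$ over $(\Sig,\s^n)$, $\Phi_{\A^n}^s$ has a unique equilibrium state. By choice of $n$, the measure $\mu$ is $\s^{n-1}$-ergodic (taking $n=2$ as the base case with $\s$-ergodicity), so its $\s^n$-ergodic decomposition has the form $\mu=\tfrac{1}{k}\sum_{i=0}^{k-1}(\s^i)_*\nu$ for some $k\mid n$ with $k\geq 2$ and some $\s^n$-ergodic $\nu$, with the measures $(\s^i)_*\nu$ pairwise distinct. Affineness of $\mu\mapsto h_\mu(\s^n)$ and of $\mu\mapsto\lim_m\tfrac{1}{m}\int\log\vp^s_{\A^n,m}\,d\mu$ (which holds by subadditivity and monotone/dominated convergence on the ergodic decomposition) forces each $(\s^i)_*\nu$ to itself be an equilibrium state of $\Phi_{\A^n}^s$, contradicting uniqueness. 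Hence $\mu$ is totally ergodic.

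The main obstacle is the verification that typicality descends to $\A^n$; once that is in hand, the ergodic-decomposition argument is routine.
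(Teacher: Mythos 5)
Your proposal is correct and follows essentially the same route as the paper. The paper argues directly: $\A^n$ is typical over $(\Sig,\s^n)$ via the same $p$ and $z$, so $\Phi_{\A^n}^s$ has a unique (hence $\s^n$-ergodic) equilibrium state, which by the singular-value-potential analogue of Lemma \ref{lem: nP} must coincide with $\mu_{\A,s}$; total ergodicity follows at once. You reach the same conclusion via a contradiction with the $\s^n$-ergodic decomposition, which is slightly more roundabout but logically equivalent. What you add beyond the paper is the explicit verification that typicality descends to $\A^n$ (holonomies unchanged as subsequential limits, return matrix $P^{n/\gcd(m,n)}$ preserves pinching and the $t$-eigenvalue-product condition in absolute value, twisting unchanged) and that Lemma \ref{lem: nP}'s argument applies verbatim to $\Phi_\A^s$ using uniform comparability of $\log\vp^s_{\A^n,m}\circ\s^i$ and $\log\vp^s_{\A^n,m}$; the paper asserts both of these without detail.
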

\begin{proof}
For any $n\in \N$, $\A^n$ is typical with respect to $(\Sig,\s^n$) via the same periodic and the homoclinic points $p$ and $z$ from the definition of typical cocycles. Applying Proposition \ref{prop: typical unique eq} to $\A^n$ and $(\Sig,\s^n)$, $\Phi_{\A^n}^s$ has a unique equilibrium state $\mu_{\A^n,s} \in \M(\s^n)$. In particular, $\mu_{\A^n,s}$ is ergodic with respect to $(\Sig,\s^n)$. From Lemma \ref{lem: nP} which also applies to $\Phi_{\A^n}^s$, it follows that $\mu_{\A^n,s}$ coincides with $\mu_{\A,s}$. Hence, $\mu_{\A,s}$ is totally ergodic. 
\end{proof}

\subsection*{Acknowledgments}
We would like to thank Ian Morris, Mark Piraino, Anthony Quas, and Cagri Sert for helpful comments, as well as for pointing out an incorrect statement about Bernoulli systems in an earlier version of this paper.

\bibliographystyle{amsalpha}
\bibliography{K-property}
\end{document}